\numberwithin{equation}{section}
\newcommand{\na}{\nabla}
\newcommand{\bR}{{\Bbb R}}
\newcommand{\RR}{\mathbb{R}}
\newtheorem{theorem}{Theorem}[section]
\newtheorem{theorem/definition}{Theorem/Definition}[section]
\newtheorem{proposition}{Proposition}[section]
\newtheorem{lemma}{Lemma}[section]
\newtheorem{corollary}{Corollary}[section]
\theoremstyle{remark}
\newtheorem{remark}{Remark}[section]
\theoremstyle{definition}
\begin{document}
\title[Curvature estimates for 4D gradient Ricci solitons]
{On Curvature Estimates for four-dimensional gradient Ricci solitons}
\author{HUAI-DONG CAO}
\address{Department of Mathematics,  Lehigh University,
Bethlehem, PA 18015, USA}
\email{huc2@lehigh.edu}

\thanks{Research partially supported by a Simons Foundation Collaboration Grant (\#586694 HC)}

\begin{abstract} In this survey paper, we analyse and compare the recent curvature estimates for three types of $4$-dimensional gradient Ricci solitons, especially between Ricci shrinkers \cite{MW} and expanders \cite{CaoLiu}. In addition, we provide some new curvature estimates for $4$-dimensional gradient steady Ricci solitons, including the sharp curvature estimate $|Rm|\le C R$ for gradient steady Ricci solitons with positive Ricci curvature (see Theorem 1.1).

\end{abstract} 

\dedicatory{Dedicated to Professor Renato Tribuzy on the occasion of his 75th birthday.}
\maketitle
\date{}

\section{Introduction}

A complete Riemannian manifold $(M^n, g)$ is called a {\it gradient Ricci soliton} if there exists a smooth function $f$ on $M^n$ such that the Ricci tensor $Rc=\{R_{ij}\}$ of the metric $g=\{g_{ij}\}$ satisfies the equation $Rc +\na^2 f=\lambda g$  for some $\lambda \in{\mathbb R}$, or in local coordinates, $$R_{ij}+\nabla_i\nabla_jf=\lambda g_{ij} . \eqno(1.1)$$ Here, $\na ^2 f=\{\na_i\na_j f\}$ denotes the Hessian of $f$. The Ricci soliton $(M^n, g, f)$ is said to be  
{\it shrinking}, or {\it steady}, or  {\it expanding} if $\lambda>0$, or $\lambda=0$, or $\lambda<0$, respectively. The function $f$ is called a {\it potential function} of the gradient Ricci soliton. Clearly, when $f$ is a constant function  $g$ is simply an Einstein metric. Thus, gradient Ricci solitons are a natural extension of Einstein manifolds. They are also self-similar solutions to Hamilton's Ricci flow and often model formation of singularities \cite{Ha95F}, thus playing an important role in the study of the Ricci flow. In particular, shrinking and steady solitons arise as possible finite time Type I and Type II, respectively, singularity models in the Ricci flow on compact manifolds. We refer the readers to a survey article of the author \cite{Cao08b}, and the references therein, for a basic overview. 

There have been a lot of advances by experts in the field in investigating the geometry of gradient shrinking and steady Ricci solitons and their classifications in the past two decades, while the research activities on expanding solitons have also picked up in recent years. In particular, there is now a complete classification of 3-dimensional finite time singularity models. In the shrinking case, as a consequence of the famous Hamilton-Ivey curvature pinching theorem \cite{Ha95F, Ivey}, Ivey \cite{Ivey} observed that a compact 3-dimensional gradient shrinking soliton must be a finite quotient of the round sphere ${\mathbb S}^3$. Subsequently, Perelman \cite{P2} showed that complete non-flat noncollapsed gradient shrinking solitons with bounded and nonnegative curvature are finite quotients of either ${\mathbb S}^3$ or ${\mathbb S}^2\times \bR$ (see also Hamilton \cite{Ha95F}). Naber \cite{Naber} showed that gradient shrinking solitons with bounded curvature are necessarily noncollapsed. Finally, Cao-Chen-Zhu \cite{CCZ08}, together with  Ni-Wallach \cite{NW}, classified 3-dimensional complete gradient shrinking solitons (see also related work by Petersen-Wylie \cite{PW2}). For gradient steady solitons, Brendle \cite{Brendle1} proved that the only 3-dimensional non-flat noncollapsed gradient steady soliotn is the rotationally symmetric Bryant soliton \cite{Bryant} on $\bR^3$, as claimed by Perelman \cite{P2}. Note that such a steady soliton necessarily has positive sectional curvature by a result of B.-L. Chen \cite{BChen}. More recently, Brendle \cite{Brendle2} classified 3-dimensional complete noncollapsed ancient solutions and especially proved Perelman's conjecture that any 3-dimensional noncollapsed eternal solution with bounded positive sectional curvature must be a gradient steady soliton (hence the Bryant soliton). In conclusion, the works of Hamilton \cite {Ha95F}, Perelman \cite{P2}, and Brendle \cite{Brendle2}, prove that any 3-dimensional finite time singularity model for the Ricci flow on a compact 3-manifold is a finite quotient of either ${\mathbb S}^3/\Gamma$ or a round cylinder ${\mathbb S}^2 \times \RR$ or its ${\mathbb Z}_2$-quotient, or isometric to the Bryant soliton. 

One of the special features of the Ricci flow in dimension $n=3$ is that the positivity of the Ricci curvature (and the sectional curvature) is preserved by the Ricci flow \cite{Ha82, Shi1}. Especially, a magic of 3-dimensional Ricci flow is the Hamilton-Ivey curvature pinching property: whenever a solution $g(t)$ to the Ricci flow on a 3-manifold forms a finite time singularity, meaning $K(t)=\max_{x\in M^3} |Rm|(x, t)$ of the curvature tensor $Rm$ tends to the infinity as $t$ tends to the blow-up time $T<\infty$, the positive curvature is much larger than the (absolute value of) negative curvature; as a consequence, the limit of any sequence of parabolic rescaled solutions around almost maximal curvature points will converge to a noncollapsed ancient solution ${\tilde g}(t)$ to the Ricci flow with bounded and nonnegative curvature $0\le \widetilde {Rm}\le C$ (such a solution ${\tilde g}(t)$ is called an ancient $\kappa$-solution by Perelman \cite{P2}). Moreover, Chen \cite{BChen} proved that any complete 3-dimensional ancient solution $g(t)$, $-\infty < t<T$,  to the Ricci flow must have nonnegative sectional curvature (i.e., $Rm\ge 0$ when $n=3$). Since shrinking solitons and steady solitons are special ancient solutions, it follows that all 3-dimensional shrinking and steady Ricci solitons have nonnegative sectional curvature. In particular, their curvature tensor $Rm$ is controlled by the scalar curvature $R$, i.e., $|Rm|\le cR$ for some universal constant $c>0$. 

In general,  when studying complete noncompact Riemannian manifolds it is crucial to gain information on curvature control at infinity. Thus, it is important to understand the curvature behavior of higher dimensional gradient Ricci solitons. However, when dimension $n\ge 4$, the Ricci flow no longer preserves the positivity of the Ricci curvature or sectional curvature in general; see, e.g., Ni \cite{Ni04}, Knopf \cite{Knopf06}, M\'aximo \cite{Max11} and Bettiol-Krishnan \cite{BK19}. While gradient shrinking solitons enjoy some special geometric properties, such as the optimal asymptotic growth estimates for potential functions by Cao-Zhou \cite{CaoZhou}, the volume growth estimates by Cao-Zhou \cite{CaoZhou} and Munteanu-Wang \cite{MW12}, the positivity of scalar curvature by Chen \cite{BChen} and Pigola-Rimoldi-Setti  \cite{PRS}, and the quadratic decay lower bound for the scalar curvature by Chow-Lu-Yang \cite{CLY} etc, there exist examples of gradient shrinking (and steady) solitons that have mixed Ricci or sectional curvatures; see, e.g., Koiso \cite{Koiso} and Cao \cite{Cao94}, Wang-Zhu \cite{WZ04}, Feldman-Ilmanen-Knopf \cite{FIK}, and Dancer-Wang \cite{DW11}. Thus, one cannot expect any Hamilton-Ivey type pinching property or a result similar to Chen \cite{BChen} for the curvature tensor of a general n-dimensional ancient solution when $n\ge 4$. 

Therefore, it was rather surprising that Munteanu and Wang \cite{MW} were able to prove the curvature estimate $|Rm|\le C R$ for 4-dimensional complete (noncompact) gradient shrinking solitons with bounded scalar curvature $R\le R_0$. Their curvature estimate, together with the uniqueness result of Kotschwar-Wang \cite{KW15}, has played a crucial role in the recent progress of classifying 4-dimensional complete noncompact gradient shrinking Ricci solitons, as well as in the classification of complex 2-dimensional complete gradient K\"ahler-Ricci solitons with scalar curvature going to zero at infinity by Conlon-Deruelle-Sun \cite{CDS19}. Subsequently, inspired by the work of Munteanu and Wang \cite{MW}, Cao-Cui \cite{CaoCui} and Chan \cite{Chan1} studied the corresponding curvature estimates for steady solitons either with positive Ricci curvature or with scalar curvature decaying to zero at infinity; see also a recent extension of the curvature estimate of Munteanu and Wang to 4-dimensional gradient shrinking solitons with suitable scalar curvature growth  by Cao-Ribeiro-Zhou \cite{CRZ}, and the work of Chow-Freedman-Shin-Zhang \cite{Chow et al} on curvature estimates for 4-dimensional gradient Ricci soliton singularity models. 

While expanding solitons are abundant and in general behave rather differently than shrinking solitons, there are some similarities between gradient expanding solitons with nonnegative Ricci curvature and gradient shrinking solitons, such as the optimal asymptotic growth estimates for potential functions (see, e.g.,  \cite{Cao et al, CarNi}) and the maximal volume growth estimate by Hamilton \cite{Ha05} (see also \cite{CLN} and its extension by Carrillo-Ni \cite{CarNi}). 
Moreover, recently P.-Y. Chan \cite{Chan2} proved that if $(M^4, g, f)$ is a complete noncompact gradient expanding Ricci soliton with bounded scalar curvature $|R|\le R_0$ and proper potential function $f$ so that $\lim_{r(x)\to \infty} f(x) =-\infty$, then the curvature tensor $Rm$ is bounded. Note that $Rc\ge 0$ implies $0\le R\le R_0$, for some $R_0>0$, and $f$ proper (see Propositions 4.1-4.2), hence it follows that $4$-dimensional complete gradient expanding solitons with nonnegative Ricci curvature $Rc\geq 0$  must have bounded Riemann curvature tensor $|Rm|\le C$.  Motivated by the work of Munteanu-Wang \cite{MW}, as well as Cao-Cui \cite{CaoCui} and Chan \cite{Chan1}, it is then natural to ask if one could also control the Riemann curvature tensor $Rm$ of a $4$-dimensional complete gradient expanding soliton with nonnegative Ricci curvature by its scalar curvature $R$. Despite some key differences with the shrinking case, notably the lack of scalar curvature quadratic (or polynomial) decay lower bound for expanders,  
this turns out to be possible as shown in the very recent work of T. Liu and the author \cite{CaoLiu} (see also Theorems 4.1 and 4.2). 

The main purpose of this article is to analyse and compare the curvature estimates for the three types of 4-dimensional  gradient Ricci solitons, especially between Ricci shrinkers and expanders. We hope this will be useful in understanding similarities and differences of the three types of gradient Ricci solitons in dimension four and also shed some light to the higher dimensional case.  

Moreover, we obtained some new curvature estimates for 4-dimensional gradient steady solitons, including the following sharp curvature estimate $|Rm|\le C R$ for  gradient steady Ricci solitons with positive Ricci curvature (see also Theorem 5.2). 

\begin{theorem}
Let $(M^4, g, f)$ be a complete noncompact $4$-dimensional
gradient steady Ricci soliton with positive Ricci curvature $Rc>0$ such that the scalar curvature $R$ attains its maximum at some point $x_0\in M$. Then, 
\[ |Rm|  \le C R \quad \mbox{on} \ M. \]
\end{theorem}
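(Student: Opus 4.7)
\emph{Preliminary identities and global structure.} For a steady ($\lambda=0$) soliton the standard identities specialize to
\begin{equation*}
R+|\nabla f|^{2}=\mathrm{const},\qquad \nabla_{i}R=2R_{ij}\nabla^{j}f,\qquad \Delta_{f}R=-2|Rc|^{2},
\end{equation*}
where $\Delta_{f}:=\Delta-\langle\nabla f,\nabla\cdot\rangle$. At the maximum point $x_{0}$ of $R$, invertibility of $Rc(x_{0})>0$ together with $\nabla R(x_{0})=0$ forces $\nabla f(x_{0})=0$; hence the constant above equals $R_{\max}:=R(x_{0})$, and $|\nabla f|^{2}=R_{\max}-R\ge 0$ on all of $M$, giving in particular $0<R\le R_{\max}$. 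Moreover $\nabla^{2}f=-Rc<0$ shows $f$ is strictly concave with unique critical point at $x_{0}$, so by standard arguments for steady solitons with $Rc>0$ (see e.g.\ \cite{CaoCui,Chan1}), $f$ is proper with $f\to-\infty$ at infinity and the superlevel sets $\{f\ge a\}$ are compact.

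\emph{Reduction to the Weyl tensor.} In dimension four the orthogonal decomposition
\begin{equation*}
|Rm|^{2}=|W|^{2}+2|\mathring{Rc}|^{2}+\tfrac{1}{6}R^{2}
\end{equation*}
and the fact that $Rc>0$ forces each Ricci eigenvalue into $(0,R)$ together give $|Rc|^{2}\le R^{2}$ and hence $|\mathring{Rc}|^{2}\le\tfrac{3}{4}R^{2}$. The theorem therefore reduces to establishing $|W|\le CR$, which via the splitting $W=W^{+}\oplus W^{-}$ may be done for $W^{+}$ and $W^{-}$ separately.

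\emph{Bochner inequality and iteration.} I would invoke Derdzinski's Weitzenbock identity
\begin{equation*}
\Delta|W^{\pm}|^{2}=2|\nabla W^{\pm}|^{2}+R|W^{\pm}|^{2}-36\det W^{\pm},
\end{equation*}
together with the pointwise algebraic bound $|\det W^{\pm}|\le c|W^{\pm}|^{3}$ and the soliton equation to convert $\Delta$ into $\Delta_{f}$, obtaining
\begin{equation*}
\Delta_{f}|W^{\pm}|^{2}\ge 2|\nabla W^{\pm}|^{2}+R|W^{\pm}|^{2}-C_{0}|W^{\pm}|^{3}.
\end{equation*}
Setting $u:=|W^{\pm}|^{2}/R^{2}$ and computing $\Delta_{f}u$ by the product rule, using $\Delta_{f}R=-2|Rc|^{2}$ together with the Cauchy--Schwarz bound $|Rc|^{2}\ge R^{2}/4$, produces a reaction--diffusion inequality for $u$ in which a positive reaction term of order $Ru$ competes with a negative cubic term of order $Ru^{3/2}$. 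Following the Munteanu--Wang scheme \cite{MW}, I would test this inequality against $\varphi^{2}u^{p-1}$ for cutoffs $\varphi$ supported in compact superlevel sets of $f$, integrate by parts (controlling the drift via $|\nabla f|^{2}\le R_{\max}$), and invoke the Sobolev inequality coming from the $\kappa$-noncollapsing of the soliton; a Moser iteration then yields the uniform bound $|W^{\pm}|\le CR$.

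\emph{Main obstacle.} The decisive analytic difficulty is absorbing the cubic self-interaction $C_{0}|W|^{3}$ from Derdzinski's identity. In the shrinker case \cite{MW} this is achieved using the Chow--Lu--Yang~\cite{CLY} quadratic lower bound $R\gtrsim r(x)^{-2}$, which has no analog for steady solitons. The substitute in the present setting must be built from the soliton coercivity $\Delta_{f}R\le -R^{2}/2$ (a direct consequence of $|Rc|^{2}\ge R^{2}/4$) together with the algebraic bound $|\mathring{Rc}|\le CR$; verifying that these ingredients, combined with the properness of $f$ and the boundedness $|\nabla f|^{2}\le R_{\max}$, suffice to control the cubic term uniformly across the Moser iteration---despite $f\to-\infty$ at infinity---is the main quantitative content of the argument.
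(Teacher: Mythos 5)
Your setup and reductions are fine as far as they go, but the proof has a genuine gap exactly where you flag the ``main obstacle'': the cubic term is never actually absorbed, and the ingredients you propose for absorbing it do not suffice. Concretely, with $u=|W^{\pm}|^{2}/R^{2}$ your Weitzenbock inequality produces a reaction term $+Ru$ against a cubic term $-C|W^{\pm}|^{3}/R^{2}$. Even after invoking the a priori bound $|Rm|\le C$ (Proposition 5.2), the cubic term only improves to $-Cu$, a \emph{linear} term with a nondegenerating constant, whereas the favorable term $+Ru$ degenerates because $R\to 0$ at infinity (for the steady examples $R\le c/F$ with $F$ growing linearly). The resulting inequality is of the form $\Delta_{f-2\ln R}\,u\ge -Cu$, from which no maximum principle or Moser iteration can conclude $u\le C$; this is precisely the ``linear reaction term'' difficulty the paper isolates in Steps 5 and 9 of the shrinking case, overcome there only via the Chow--Lu--Yang bound $R\ge C/f$, which, as you yourself note, has no steady analog. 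The substitute you suggest, $\Delta_{f}R\le -R^{2}/2$, does not manufacture the needed superlinear positive term, and the appeal to a Sobolev inequality via $\kappa$-noncollapsing is unjustified (steady solitons need not be noncollapsed; the cigar is not).

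The missing idea, which is the heart of the paper's argument, is Lemma 2.5 (Munteanu--Wang): wherever $\nabla f\neq 0$, $|Rm|\le A_{0}\left(|Rc|+|\nabla Rc|/|\nabla f|\right)$. Since $R+|\nabla f|^{2}=R_{\max}$ and $R$ is strictly decreasing along $\nabla F$, one has $|\nabla f|\ge c>0$ outside a compact set, so this lemma converts the good gradient term $|\nabla Rc|^{2}/R^{2}$ --- which the Bochner formula for $|Rc|^{2}/R^{2}$ produces with a \emph{positive} sign --- into a genuinely quadratic positive term of order $\left(|Rm|/R\right)^{2}$. The paper then applies the maximum principle to $v=|Rm|/R+\lambda|Rc|^{2}/R^{2}$ with $\lambda$ large, working with the operator $\Delta_{f-2\ln R}$ and the gradient bound $|\nabla\ln R|\le 2$, to get $\Delta_{f-2\ln R}(v)\ge v^{2}-C$, which closes. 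Your Weyl-tensor route bypasses $\nabla Rc$ entirely and therefore has no source for this quadratic term; without it, or some replacement, the argument cannot close.
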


\vfill\eject

\section{Preliminaries}

In this section, we recall some basic facts and collect several known differential identities and inequalities on curvatures of gradient Ricci solitons satisfing the equation
\[ R_{ij}+\nabla_i\nabla_jf=\lambda g_{ij} . \]
Moreover, we recall a key estimate of the curvature tensor $Rm$ in terms of $Rc$, $\na Rc$, and the potential function $f$ for 4-dimensional gradient Ricci solitons due to Munteanu and Wang \cite{MW}.  

Throughout the paper, we denote by $$Rm=\{R_{ijkl}\}, \quad Rc=\{R_{ik}\},\quad R $$ the Riemann curvature tensor, the Ricci tensor, and the scalar curvature of the metric $g=\{g_{ij}\}$, respectively.

\begin{lemma} {\bf (Hamilton \cite{Ha95F})} Let $(M^n, g, f)$
be a  gradient Ricci soliton satisfying Eq. (1.1).
Then
$$ R+\Delta f =n \lambda, \eqno (2.1)$$
$$\nabla_iR=2R_{ij}\nabla_jf, \eqno(2.2)$$
$$R+|\nabla f|^2=2\lambda f +C_0 \eqno(2.3)$$ for some constant $C_0$.
\end{lemma}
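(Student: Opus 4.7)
My plan is to combine the structural consequences of the steady soliton identities with the $4$-dimensional Munteanu-Wang algebraic estimate recalled in Section~2; the hypothesis $Rc>0$ will be used both to bound $|Rc|$ by $R$ pointwise and to supply the correct sign in a Bochner-type maximum principle. First I would extract the global consequences of $R$ attaining its maximum at $x_0$ combined with $Rc>0$. In the steady case $\lambda=0$, so (2.2) reads $\nabla_i R=2R_{ij}\nabla_j f$; evaluating at $x_0$ gives $Rc(\nabla f)(x_0)=0$, and positivity of $Rc$ forces $\nabla f(x_0)=0$. Plugging $x_0$ into (2.3) identifies $C_0=R(x_0)=R_{\max}$, so the conservation law
\[ R+|\nabla f|^2\equiv R_{\max} \]
holds on all of $M$. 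In particular $0<R\le R_{\max}$, $|\nabla f|$ is bounded, and since every Ricci eigenvalue is positive we have $|Rc|^2=\sum_i\lambda_i^2\le\bigl(\sum_i\lambda_i\bigr)^2=R^2$, i.e.\ $|Rc|\le R$ pointwise.

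Next I would invoke the key Munteanu-Wang estimate recalled in Section~2, which in dimension four bounds $|Rm|$ in terms of $|Rc|$, $|\nabla Rc|$, and $f$. Using the conservation law above to simplify the $f$-dependent denominators and substituting $|Rc|\le R$, the MW inequality reduces the entire problem to proving the matching gradient bound $|\nabla Rc|\le CR$ on $M$.

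The main obstacle is exactly this gradient estimate. I would attack it via a Bochner-type maximum principle on the drift Laplacian $\Delta_f=\Delta-\langle\nabla f,\nabla\cdot\rangle$, applied to an auxiliary function of the form $u=|\nabla Rc|^2/R^3$ (or $u=|\nabla Rc|^2+AR^2$ should denominators prove inconvenient). Using the soliton equation $R_{ij}+\nabla_i\nabla_j f=0$, the commutation $\nabla R=2Rc(\nabla f)$, and the traced identity $\Delta_f R=-2|Rc|^2$, one hopes to derive at a putative maximum an inequality of the form $\Delta_f u\ge c_1 u^2-c_2(1+u)$. The positivity of $Rc$ is what gives the correct sign of the quadratic reaction term and lets one dominate the cross-terms involving the full curvature tensor (which must themselves be reabsorbed through the MW estimate in a bootstrap). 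Because $R+|\nabla f|^2$ is constant, $f$ is proper on $M$, so a standard cut-off argument on sublevel sets $\{f\le L\}$ makes the maximum principle global.

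Finally, substituting $|\nabla Rc|\le CR$ and $|Rc|\le R$ back into the MW estimate yields $|Rm|\le CR$ in the region where $|\nabla f|$ is bounded away from zero; on a neighborhood of the critical set of $f$, the conservation law forces $R$ to be close to $R_{\max}$, so $|Rm|\le CR$ there reduces to $|Rm|$ being uniformly bounded, which follows from $|Rc|$ and $|\nabla Rc|$ bounded together with the MW estimate applied just outside. The principal difficulty is the Bochner-maximum-principle computation that produces $|\nabla Rc|\le CR$; all other ingredients are essentially consequences of Lemma~2.1 and the MW algebraic lemma.
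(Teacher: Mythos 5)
Your proposal does not prove the statement at hand. The statement is Lemma~2.1, Hamilton's three basic identities for a gradient Ricci soliton: the trace identity $R+\Delta f=n\lambda$, the contracted identity $\nabla_iR=2R_{ij}\nabla_jf$, and the conservation law $R+|\nabla f|^2=2\lambda f+C_0$. What you have written is instead a strategy for Theorem~1.1 (equivalently Theorem~5.2), the sharp curvature estimate $|Rm|\le CR$ for $4$-dimensional steady solitons with $Rc>0$. Indeed, your argument \emph{uses} (2.2) and (2.3) as known inputs (to locate the critical point of $f$, to identify $C_0=R_{\max}$, and to get the conservation law $R+|\nabla f|^2\equiv R_{\max}$), so nothing in the proposal establishes the identities themselves. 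The entire proof of the stated lemma is therefore missing.

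For the record, the standard argument (which is what Hamilton's proof amounts to) is short: (2.1) follows by tracing the soliton equation $R_{ij}+\nabla_i\nabla_jf=\lambda g_{ij}$ with $g^{ij}$. For (2.2), take the divergence of the soliton equation, use the contracted second Bianchi identity $\nabla_jR_{ij}=\tfrac12\nabla_iR$ together with the commutation $\nabla_j\nabla_i\nabla_jf=\nabla_i\Delta f+R_{ij}\nabla_jf$, and then substitute $\Delta f=n\lambda-R$ from (2.1) to cancel the $\nabla_i\Delta f$ term; this yields $\tfrac12\nabla_iR=R_{ij}\nabla_jf$. For (2.3), compute
\begin{equation*}
\nabla_i\bigl(R+|\nabla f|^2-2\lambda f\bigr)=\nabla_iR+2\nabla_i\nabla_jf\,\nabla_jf-2\lambda\nabla_if
=2R_{ij}\nabla_jf+2(\lambda g_{ij}-R_{ij})\nabla_jf-2\lambda\nabla_if=0,
\end{equation*}
using (2.2) and the soliton equation, so $R+|\nabla f|^2-2\lambda f$ is constant on the (connected) manifold. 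You should supply this argument; the material you wrote belongs to the proof of Theorem~5.2, not to Lemma~2.1.
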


Furthermore, in the shrinking or expanding case (i.e., $\lambda\neq 0$), we can normalize the potential function $f$ in (2.3) so that 
 \[R+|\nabla f|^2=2\lambda f. \eqno(2.4)\]

We now collect several well-known differential identities for the curvatures $R, Rc$ and $Rm$ that we shall use later. 

\begin{lemma} Let $(M^n, g, f)$ be a gradient Ricci soliton satisfying Eq. (1.1). Then, 
\begin{eqnarray*}
\Delta_{f} R &=& 2\lambda R-2|Rc|^2,\\
\Delta_{f} R_{ik} &=&  2\lambda R_{ik} -2R_{ijkl}R_{jl},\\
\Delta_{f} {Rm} &=&   2\lambda Rm+ Rm\ast Rm,\\
\na_lR_{ijkl} &=& \na_jR_{ik}-\na_i R_{jk}=R_{ijkl}\na_lf, 
\end{eqnarray*}
where,  on the RHS of the third equation, $Rm\ast Rm$ denotes the sum of a finite number of terms involving quadratics in $Rm$, and $\Delta_f =:\Delta -\nabla f\cdot \nabla$ is the weighted Laplace operator, which is self-adjoint with respect to the weighted measure $e^{-f}dV_g.$
\end{lemma}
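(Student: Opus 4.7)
The plan is to prove the four identities in the natural order (4), (1), (2), (3), since each subsequent identity uses the previous ones together with the soliton equation $R_{ij}+\na_i\na_j f=\lambda g_{ij}$.

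\emph{Identity (4).} The first equality $\na_l R_{ijkl}=\na_j R_{ik}-\na_i R_{jk}$ is just the contracted second Bianchi identity, which holds on any Riemannian manifold. For the second equality, differentiate the soliton equation to obtain $\na_k R_{ij}=-\na_k\na_i\na_j f$ and $\na_i R_{kj}=-\na_i\na_k\na_j f$. Subtracting and applying the Ricci identity $[\na_i,\na_k]\na_j f = R_{ikjl}\na^l f$ (with the paper's curvature sign convention) yields $\na_j R_{ik}-\na_i R_{jk}=R_{ijkl}\na^l f$ after invoking the first Bianchi identity to rewrite the resulting curvature term in the $R_{ijkl}$ form.

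\emph{Identity (1).} Starting from (2.2), $\na_i R = 2R_{ij}\na_j f$, take the divergence:
\[ \Delta R = 2(\na_i R_{ij})\na_j f + 2 R_{ij}\na_i\na_j f. \]
Apply the twice-contracted Bianchi identity $\na_i R_{ij}=\tfrac{1}{2}\na_j R$ to the first term and the soliton equation $\na_i\na_j f = \lambda g_{ij}-R_{ij}$ to the second. This gives $\Delta R = \langle\na R,\na f\rangle + 2\lambda R - 2|Rc|^2$, i.e. $\Delta_f R = 2\lambda R - 2|Rc|^2$.

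\emph{Identity (2).} Apply $\na_j$ to identity (4) (taking the form $\na_j R_{ik} = \na_i R_{jk} + R_{ijkl}\na_l f$) and sum over $j$:
\[ \Delta R_{ik} = \na_j\na_i R_{jk} + (\na_j R_{ijkl})\na_l f + R_{ijkl}\na_j\na_l f. \]
For the last term, the soliton equation gives $R_{ijkl}\na_j\na_l f = \lambda R_{ik} - R_{ijkl}R_{jl}$. For the middle term, applying the contracted Bianchi identity (identity~(4) again) to $\na_j R_{ijkl}$ produces a $\na_l R_{ik}$ contribution that, when paired with $\na_l f$, rebuilds the $\na f\cdot \na R_{ik}$ piece of $\Delta_f R_{ik}$, together with another $R_{ijkl}R_{jl}$ term. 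For the first term, commute derivatives: $\na_j\na_i R_{jk}=\na_i\na_j R_{jk} + [\na_j,\na_i]R_{jk}$, use contracted Bianchi to rewrite $\na_j R_{jk} = \tfrac{1}{2}\na_k R$, and apply (2.2) to identify $\na_i\na_k R$; the commutator contributes further curvature-quadratic terms. Collecting everything yields $\Delta_f R_{ik}=2\lambda R_{ik} - 2 R_{ijkl}R_{jl}$.

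\emph{Identity (3).} The same strategy, applied directly to $R_{ijkl}$ rather than $R_{ik}$, produces $\Delta_f R_{ijkl} = 2\lambda R_{ijkl} + (Rm\ast Rm)_{ijkl}$. Concretely, differentiate $R_{ijkl}$, use the second Bianchi identity $\na_m R_{ijkl} = -\na_i R_{jmkl}-\na_j R_{mikl}$ (cyclic in $m,i,j$) together with identity~(4) to express $\na_m R_{ijkl}$ in terms of Ricci derivatives plus $Rm\ast\na f$ terms; applying $\na^m$ again and using the Ricci identity to commute covariant derivatives gives the Laplacian together with one term $g^{ml}R_{ijkp}\na_m\na_l f = \lambda R_{ijkl} + \text{(quadratic in } Rm\text{)}$ from the soliton equation, plus further quadratic curvature terms from the commutators. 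The precise quadratic combination is schematic and is exactly why the statement uses the notation $Rm\ast Rm$. The main obstacle across all four identities is bookkeeping the curvature-sign conventions when invoking the Ricci identity and first/second Bianchi identities; everything else is a mechanical consequence of the soliton equation and its first derivative.
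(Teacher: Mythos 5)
The paper offers no proof of this lemma at all: it is stated as a collection of ``well-known differential identities'' (essentially due to Hamilton), so there is no argument in the text to compare yours against. Your outline is the standard derivation and is correct in strategy: (4) from differentiating the soliton equation and applying the Ricci identity, (1) by taking the divergence of $\na_i R=2R_{ij}\na_j f$ and substituting $\na_i\na_j f=\lambda g_{ij}-R_{ij}$, and (2), (3) by differentiating the contracted and full second Bianchi identities, commuting derivatives, and feeding the soliton equation back in. Two small remarks. First, in identity (4) no appeal to the first Bianchi identity is needed: after relabelling, $\na_j R_{ik}-\na_i R_{jk}=[\na_i,\na_j]\na_k f$ is already of the form $R_{ijkl}\na_l f$ by the Ricci identity alone, so that step of your write-up is superfluous (and would be confusing if taken literally, since swapping interior indices of $Rm$ is not a curvature symmetry). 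Second, your treatments of (2) and (3) are schematic where the real work lies --- in (2) the term $-\na_k R_{il}\na_l f$ and the commutator $[\na_j,\na_i]R_{jk}$ each produce $R_{il}R_{lk}$-type quadratics that must be checked to cancel so that only $-2R_{ijkl}R_{jl}$ survives; for (3) this is immaterial since the statement only claims a schematic $Rm\ast Rm$, but for (2) the precise coefficient $-2R_{ijkl}R_{jl}$ is used later and does require the explicit bookkeeping you defer. Neither point is a gap in the approach, only in the level of detail.
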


It is also useful to note the following differential identity,
\begin{equation*}
\Delta_f (R^{-a}) =aR^{-a}\left(-2\lambda +2R^{-1}|Rc|^2 +(a+1)|\na\ln R|^2 \right), \eqno(2.5)
\end{equation*}
which will be used frequently later. 

Moreover, based on Lemma 2.2,  one can easily derive the following differential inequalities  (see also \cite{MW, CaoCui, CaoLiu}):

\begin{lemma} Let $(M^n, g, f)$ be a gradient  Ricci soliton satisfying Eq. (1.1). Then
\begin{eqnarray*}
\Delta_{f} |Rc|^2 & \ge & 2|\na Rc|^2 + 4\lambda |Rc|^2-4|Rm| |Rc|^2, \\
\Delta_{f}|Rm|^2  &\ge & 2|\na Rm|^2  + 4\lambda |Rm|^2-c|Rm|^3,\\
\Delta_{f} |Rm| &\ge & 2\lambda |Rm|-c|Rm|^2.
\end{eqnarray*}
Here $c>0$ is some universal constant depending only on the dimension $n$.
\end{lemma}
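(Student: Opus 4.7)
The plan is to derive all three inequalities from a single Bochner-type identity
\[ \Delta_f |T|^2 = 2\langle \Delta_f T, T\rangle + 2|\nabla T|^2, \]
applied with $T=Rc$ and $T=Rm$. This identity follows immediately from the product rule $\Delta|T|^2 = 2\langle \Delta T, T\rangle + 2|\nabla T|^2$ together with $\nabla f\cdot\nabla|T|^2 = 2\langle \nabla_{\nabla f}T, T\rangle$, which converts the ordinary Laplacian $\Delta T$ on the right-hand side into the weighted Laplacian $\Delta_f T$.

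For the first inequality, I would substitute the Ricci evolution $\Delta_f R_{ik} = 2\lambda R_{ik} - 2R_{ijkl}R_{jl}$ from Lemma 2.2, producing
\[ \Delta_f |Rc|^2 = 4\lambda|Rc|^2 - 4\, R_{ijkl}R_{ik}R_{jl} + 2|\nabla Rc|^2. \]
The only nontrivial step is then the algebraic bound $|R_{ijkl}R_{ik}R_{jl}|\le |Rm|\,|Rc|^2$, which follows from Cauchy--Schwarz applied to four-tensors, using that the Frobenius norm of the symmetric $(0,4)$-tensor $R_{ik}R_{jl}$ is exactly $|Rc|^2$. For the second inequality, the same recipe applied to $\Delta_f Rm = 2\lambda Rm + Rm * Rm$ gives
\[ \Delta_f|Rm|^2 = 4\lambda|Rm|^2 + 2\langle Rm * Rm,\, Rm\rangle + 2|\nabla Rm|^2, \]
and the cubic term is bounded pointwise by $c(n)|Rm|^3$, with $c(n)$ depending only on the finitely many combinatorial types of contractions that appear in the schematic symbol $Rm * Rm$.

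The third inequality is the Kato refinement of the second. I would write $\Delta_f|Rm|^2 = 2|Rm|\,\Delta_f|Rm| + 2|\nabla|Rm||^2$, invoke the Kato inequality $|\nabla|Rm||^2 \le |\nabla Rm|^2$, and cancel $2|\nabla|Rm||^2$ against part of the $2|\nabla Rm|^2$ term on the right-hand side of the second inequality. Dividing by $2|Rm|$ then yields $\Delta_f|Rm| \ge 2\lambda|Rm| - (c/2)|Rm|^2$ pointwise wherever $|Rm|>0$. The main (but mild) obstacle is handling the zero set $\{|Rm|=0\}$; the standard fix is to work with the regularization $\sqrt{|Rm|^2+\varepsilon}$, derive the analogous inequality for it, and let $\varepsilon\to 0$ to recover the claim in the distributional sense, which is what is needed for the maximum-principle applications later in the paper.
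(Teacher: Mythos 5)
Your proposal is correct and follows exactly the route the paper intends: the paper states Lemma 2.3 without a written proof, noting only that it follows "easily" from the identities of Lemma 2.2 (via the weighted Bochner formula $\Delta_f|T|^2=2\langle\Delta_f T,T\rangle+2|\nabla T|^2$) together with Kato's inequality for the third estimate, which is precisely your argument. The algebraic bounds $|R_{ijkl}R_{ik}R_{jl}|\le|Rm|\,|Rc|^2$ and $|\langle Rm\ast Rm,Rm\rangle|\le c(n)|Rm|^3$, and the $\varepsilon$-regularization at the zero set of $|Rm|$, are all standard and correctly handled.
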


\begin{remark} To derive the third  inequality, one needs to use Kato's inequality
$|\nabla |Rm||\le |\nabla Rm|$.
\end{remark}

We also  have the following differential inequalities on the covariant derivative $\na Rm$ of the curvature tensor (also see \cite{MW} for the shrinking case and \cite{CaoLiu} for the expanding case). 

\begin{lemma} Let $(M^n, g, f)$ be a gradient Ricci soliton satisfying Eq. (1.1). Then
\begin{eqnarray*}
\Delta_{f} |\na Rm|^2 &\ge & 2|\na^2 Rm|^2  +6 \lambda |\na Rm|^2-c|Rm| |\na Rm|^2 \quad \mbox{and} \\
\Delta_{f} |\na Rm| &\ge & 3\lambda |\na Rm|-c|Rm||\na Rm|,
\end{eqnarray*}
where $c>0$ is some universal constant depending only on the dimension $n$.
\end{lemma}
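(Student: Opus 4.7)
The plan is to derive both inequalities from a single Bochner-type computation for $\nabla Rm$, starting from the evolution identity $\Delta_f Rm=2\lambda Rm+Rm\ast Rm$ already recorded in Lemma 2.2, and commuting one covariant derivative past $\Delta_f$. The pointwise identity $\Delta_f |T|^2=2\langle T,\Delta_f T\rangle+2|\nabla T|^2$ (for any tensor $T$) is the workhorse: applied to $T=\nabla Rm$ it immediately yields the Hessian-square term $2|\nabla^2 Rm|^2$ and the reaction term $2\langle \nabla Rm,\Delta_f\nabla Rm\rangle$, so everything reduces to computing $\Delta_f\nabla Rm$ modulo terms of the form $Rm\ast\nabla Rm$.

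The key step is the commutator $[\nabla_i,\Delta_f]$ on the soliton. Writing $\Delta_f=\Delta-\nabla_k f\,\nabla^k$, I would compute
\[
\nabla_i\Delta_f T-\Delta_f\nabla_i T=[\nabla_i,\Delta]T-(\nabla_i\nabla_k f)\nabla^k T-\nabla_k f\bigl(\nabla_i\nabla^k-\nabla^k\nabla_i\bigr)T.
\]
The decisive observation is that the soliton equation (1.1) lets us substitute $\nabla_i\nabla_k f=\lambda g_{ik}-R_{ik}$, so the middle term contributes exactly $-\lambda\nabla_i T+R_{ik}\nabla^k T$; after moving to the other side, this is the source of the extra $\lambda\nabla_i Rm$ that combines with $\nabla_i(2\lambda Rm)=2\lambda\nabla_i Rm$ to produce the desired coefficient $3\lambda$. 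The remaining contributions, namely the standard Riemannian commutator $[\nabla_i,\Delta]$ applied to $Rm$, the drift-induced $[\nabla_i,\nabla^k]$ term, the Ricci tensor $R_{ik}\nabla^k Rm$, and $\nabla_i(Rm\ast Rm)$, all have the schematic form $Rm\ast\nabla Rm$. Putting this together yields
\[
\Delta_f\nabla Rm=3\lambda\,\nabla Rm+Rm\ast\nabla Rm.
\]

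Substituting back into $\Delta_f|\nabla Rm|^2=2|\nabla^2 Rm|^2+2\langle\nabla Rm,\Delta_f\nabla Rm\rangle$ and bounding the contraction $\langle\nabla Rm,Rm\ast\nabla Rm\rangle$ pointwise by $c|Rm||\nabla Rm|^2$ gives the first inequality. The second inequality then follows from the scalar identity $\Delta_f u^2=2u\,\Delta_f u+2|\nabla u|^2$ applied to $u=|\nabla Rm|$, together with Kato's inequality $|\nabla|\nabla Rm||\le|\nabla^2 Rm|$ which absorbs the $2|\nabla|\nabla Rm||^2$ into $2|\nabla^2 Rm|^2$; dividing by $2|\nabla Rm|$ at points where it is nonzero (and handling the critical set by the usual approximation $\sqrt{|\nabla Rm|^2+\epsilon}$ to justify the inequality distributionally) yields $\Delta_f|\nabla Rm|\ge 3\lambda|\nabla Rm|-c|Rm||\nabla Rm|$.

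The main obstacle is bookkeeping rather than ideas: one has to carefully track every tensor contraction in the commutator $[\nabla,\Delta]Rm$ and in $[\nabla_i,\nabla^k]Rm$ to confirm that no stray $\lambda$ or $Rc$ term survives outside the schematic $Rm\ast\nabla Rm$ class, and in particular that the Ricci term $R_{ik}\nabla^k Rm$ arising from $\nabla^2 f$ is correctly absorbed rather than contributing with the wrong sign. Once the commutator identity $\Delta_f\nabla Rm=3\lambda\nabla Rm+Rm\ast\nabla Rm$ is established, both stated inequalities follow mechanically.
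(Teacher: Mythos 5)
Your strategy coincides with the paper's: expand $\Delta_f(\nabla_q Rm)$ by commuting one covariant derivative past $\Delta_f$, substitute $\nabla_i\nabla_k f=\lambda g_{ik}-R_{ik}$ from the soliton equation to produce the extra $\lambda\nabla Rm$ that upgrades the coefficient from $2\lambda$ to $3\lambda$, and then pass to $|\nabla Rm|^2$ via the Bochner identity and to $|\nabla Rm|$ via Kato's inequality. The one place your outline would break down as written is the drift-induced commutator term $\nabla_k f\,[\nabla_i,\nabla^k]Rm$: schematically this is $\nabla f\ast Rm\ast Rm$, \emph{not} $Rm\ast\nabla Rm$, and since $|\nabla f|$ is unbounded on a complete noncompact soliton (it grows linearly on shrinkers and expanders) it cannot be absorbed into the error term $c|Rm||\nabla Rm|^2$ by a pointwise bound. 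The paper disposes of it using the fourth identity of Lemma 2.2, $R_{ijkl}\nabla_l f=\nabla_jR_{ik}-\nabla_iR_{jk}$, which converts every contraction of $Rm$ against $\nabla f$ into a difference of components of $\nabla Rc$, so the offending term becomes $\nabla Rc\ast Rm$ and is then legitimately bounded by $c|Rm||\nabla Rm|$. You need to invoke that identity explicitly; asserting that the term already lies in the $Rm\ast\nabla Rm$ class is not a matter of bookkeeping but is false without it. Everything else in your outline --- the $[\nabla_i,\Delta]Rm$ commutator (whose $\nabla Rm\ast Rm$ pieces are genuinely of the right form), the $R_{ik}\nabla^k Rm$ term, $\nabla(Rm\ast Rm)$, and the Kato plus $\sqrt{|\nabla Rm|^2+\epsilon}$ regularization for the second inequality --- is correct and matches the paper's argument.
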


\begin{proof} First of all, by commuting covariant differentiations and Lemma 2.2, 
\begin{eqnarray*} 
\Delta_{f} (\na_q Rm )& = & \na_p\na_p (\na_q Rm)-\na_p(\na_q Rm)\cdot\na_p f \\
&=& \na_p \big(\na_q \na_p Rm +Rm\ast Rm\big)-\big(\na_q\na_p Rm+R_{pq\bullet\bullet}Rm\big)\cdot\na_p f\\
&=&   \na_q (\Delta  Rm) -\na_q(\na_p Rm \na_p f)+(\na_p Rm) (\na_q\na_p f) +Rm\ast \na_q Rm \\
&=&  \na_q (\Delta_f  Rm) +\lambda \na_q Rm +Rm\ast \na_q Rm\\
&=& 3\lambda \na_q Rm +Rm\ast \na_q Rm,
\end{eqnarray*}
where we have used the 4th identity in Lemma 2.2  in the third equality and the soliton equation (1.1) in the 4th equality.  Thus, 
\begin{eqnarray*} 
\Delta_{f} |\na Rm|^2 & = & 2|\na^2 Rm|^2 +2 (\na Rm) \Delta_{f} (\na Rm)\\
 &\ge & 2|\na^2 Rm|^2 +6\lambda  |\na Rm|^2 -c|Rm|  |\na Rm|^2.
\end{eqnarray*}
This proves the first inequality. The second inequality follows easily from the first one and Kato's inequality. 
\end{proof}

Finally, we need the following key observation for $4$-dimensional gradient Ricci solitons due to Munteanu  and Wang \cite{MW} (see also Lemma 1 in \cite{Chan1}).

\begin{lemma}{\bf(Munteanu-Wang)} 
\label{Curv} Let $(M^4, g, f)$ be a $4$-dimensional gradient Ricci soliton.
Then, there exists some universal constant $A_0>0$ such that, at any point where $\na f\neq 0$, 
$$ |Rm|\le A_0\left(|Rc|+ \frac {|\nabla Rc|} {|\nabla f|} \right). $$
\end{lemma}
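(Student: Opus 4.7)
The plan is to exploit the fact that the algebraic Weyl tensor of an algebraic curvature tensor on a $3$-dimensional vector space vanishes, combined with the standard soliton identity $R_{ijkl}\na_l f = \na_j R_{ik}-\na_i R_{jk}$ from Lemma 2.2 which controls any component of $Rm$ carrying an index along $\na f$.

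At the point $p$ where $\na f(p)\neq 0$, I would pick a local orthonormal frame $\{e_1,e_2,e_3,e_4\}$ with $e_4 = \na f/|\na f|$. The soliton identity then rearranges to
\[
R_{ijk4}=\frac{1}{|\na f|}\bigl(\na_j R_{ik}-\na_i R_{jk}\bigr),
\]
so every component of $Rm$ with at least one index equal to $4$ is bounded by $C|\na Rc|/|\na f|$. This disposes of the ``mixed'' part of the curvature tensor.

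The remaining task is to control the ``tangential'' components $R_{abcd}$ with all four indices in $\{1,2,3\}$. The key observation is that this $4$-tensor on the $3$-dimensional subspace $e_4^{\perp}$ satisfies exactly the algebraic symmetries (pair symmetry, skew symmetry, first Bianchi identity) of an algebraic curvature tensor on $\mathbb{R}^3$. Since the Weyl part of such a tensor vanishes identically in dimension $3$, one has the purely algebraic identity
\[
R_{abcd}=\widetilde R_{ac}\delta_{bd}-\widetilde R_{ad}\delta_{bc}+\widetilde R_{bd}\delta_{ac}-\widetilde R_{bc}\delta_{ad}-\tfrac{\widetilde R}{2}\bigl(\delta_{ac}\delta_{bd}-\delta_{ad}\delta_{bc}\bigr),
\]
where $\widetilde R_{ac}:=\sum_{b=1}^{3}R_{abcb}$ is the partial trace and $\widetilde R = \tr \widetilde R_{ac}$. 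Hence $|R_{abcd}|\le C\,|\widetilde{Rc}|$.

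Finally, I relate $\widetilde{Rc}$ back to the full Ricci: $\widetilde R_{ac}=R_{ac}-R_{a4c4}$. Since $R_{a4c4}$ carries the index $4$, it is already controlled via the first step by $C|\na Rc|/|\na f|$, so
\[
|\widetilde R_{ac}|\le |Rc|+\frac{C|\na Rc|}{|\na f|}.
\]
Combining the tangential and mixed estimates yields the desired bound $|Rm|\le A_0\bigl(|Rc|+|\na Rc|/|\na f|\bigr)$. The only step that requires genuine care is the algebraic computation showing that an abstract algebraic curvature tensor on a $3$-dimensional space is determined by its partial Ricci trace, but this is a well-known pointwise linear-algebra fact and not an analytic obstacle; everything else is a direct consequence of the soliton structure and is special to dimension $n=4$.
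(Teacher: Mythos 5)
Your proof is correct and follows essentially the same route as the paper: an orthonormal frame adapted to $\nabla f$, the soliton identity $R_{ijkl}\nabla_l f=\nabla_jR_{ik}-\nabla_iR_{jk}$ to control every component carrying the index $4$, and the fact that a $3$-dimensional algebraic curvature tensor is determined by its Ricci contraction to control the tangential block. Your use of the full Weyl-free decomposition is just a more systematic version of the explicit identities for $R_{1212}$, $R_{1313}$, $R_{2323}$ that the paper writes down (and it has the minor virtue of also handling off-diagonal components such as $R_{1213}$ explicitly).
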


\begin{proof} Suppose $\na f\neq 0$ at a point $p\in M$. If we choose an orthonormal basis $\{E_1, \cdots, E_4\}$ at $p$ with $E_4=\nabla f/|\nabla f|$, then 
 $$R_{1212}=\frac 12 \big( R_{11}+R_{22}-R_{33}-R_{44}\big) +R_{3434}, $$ 
$$R_{1313}=\frac 12 \big( R_{11}+R_{33}-R_{22}-R_{44}\big) +R_{2424}, $$ 
$$R_{2323}=\frac 12 \big( R_{22}+R_{33}-R_{11}-R_{44}\big) +R_{1414} .$$ 
Moreover, by Lemma 2.2, 
 $$|R_{ijk4}| |\nabla f| =|\nabla_4R_{ijk4}|=|\nabla_jR_{ik}-\nabla_iR_{jk}|\leq 2 |\na Rc|.$$
Therefore, all sectional curvatures $K_p$ at $p$ are bounded by  
$$|K_p| \leq 2|Rc| (p)+ 2\frac {|\nabla Rc|} {|\nabla f|}(p).$$
This completes the proof of Lemma 2.5. 

\end{proof}

\section{Curvature estimates for 4D gradient shrinking Ricci solitons}

Gradient shrinking Ricci solitons are important geometric objects in understanding Hamilton's Ricci flow. They typically arise as Type I singularity models and play a crucial role in the singularity analysis of the Ricci flow. Indeed, it  is known by the works of Naber \cite{Naber} and Enders-M\"uller-Topping \cite{EMT} that the blow-ups around any finite time Type I singularity point of a solution to the Ricci flow on a compact manifold  converge to a (nontrivial) gradient shrinking Ricci soliton.

By scaling, we may assume $\lambda=\frac1 2$ in (1.1) so that the gradient shrinking Ricci soliton equation becomes
\begin{equation}
Rc+\na^2 f =\frac 1 2 g \quad \text{or} \quad R_{ij}+\nabla_i\nabla_jf=\frac 1 2  g_{ij}. 
\end{equation}

Note that, by (2.1) and (2.4), we have 
\begin{equation}
 |\nabla f|^2=f-R 
\end{equation}  
and   
\begin{equation}
 \Delta_f f=-f+\frac n 2   
\end{equation}
where $\Delta_f =:\Delta -\nabla f\cdot \nabla$ is the weighted Laplace operator, which is self-adjoint with respect to the weighted measure $e^{-f}dV_g.$

Also, by Lemmas 2.2-2.4, we have the following differential identities and inequalities for gradient shrinking Ricci solitons satisfying Eq. (3.1).
\begin{eqnarray}
\Delta_{f} R &=& R-2|Rc|^2,  \notag\\
\Delta_{f} R_{ik} &=&R_{ik} -2R_{ijkl}R_{jl},  \notag \\
\Delta_{f} {Rm} &=&  Rm+ Rm\ast Rm  \notag,\\
\na_lR_{ijkl} &=& \na_jR_{ik}-\na_i R_{jk}=R_{ijkl}\na_lf,  \notag \\
\Delta_{f} |Rc|^2 & \ge & 2|\na Rc|^2 +2|Rc|^2-4|Rm| |Rc|^2,  \label{id} \\
\Delta_{f}|Rm|^2  &\ge & 2|\na Rm|^2 + 2|Rm|^2-c|Rm|^3,  \notag \\
\Delta_{f} |Rm| &\ge & |Rm|-c|Rm|^2  \notag \\
\Delta_{f} |\na Rm|^2 &\ge & 2|\na^2 Rm|^2 + 3 |\na Rm|^2-c|Rm| |\na Rm|^2,  \notag \\
\Delta_{f} |\na Rm| &\ge & \frac 32 |\na Rm|-c|Rm||\na Rm|.  \notag
\end{eqnarray}
Here, $c>0$ is some universal constant depending only on the dimension $n$.

Next, let us recall several important geometric properties of general gradient shrinking Ricci solitons that will be useful in the curvature estimates.

\begin{proposition} {\bf (Cao-Zhou \cite{CaoZhou})}  Let $(M^n, g , f)$ be a complete noncompact gradient shrinking Ricci soliton satisfying (3.1). Then the potential function $f$ satisfies the estimates
\begin{equation}
\frac 1 4(r(x)-c_1)^2\le f(x)\le \frac 1 4 (r(x)+2\sqrt{-f(x_0)})^2,   
\end{equation}
where $r(x)$ is the distance function from any fixed base point $x_0$ in $M$. 
\end{proposition}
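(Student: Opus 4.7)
The plan is to exploit the normalized identity $|\nabla f|^2 = f - R$ from (3.2), together with the non-negativity $R\ge 0$ of the scalar curvature on any complete gradient shrinking soliton (cited in the introduction), and to compare $f$ to quadratic functions of the distance along minimizing geodesics emanating from a fixed minimum point $x_0$ of $f$.

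For the upper bound, the key observation is that (3.2) combined with $R\ge 0$ gives $|\nabla f|^2 \le f$. Setting $u := 2\sqrt{f}$, this reads $|\nabla u|\le 1$, so $u$ is $1$-Lipschitz on $\{f>0\}$. Integrating along a minimizing geodesic from $x_0$ to $x$ yields $2\sqrt{f(x)} \le 2\sqrt{f(x_0)} + r(x)$, which rearranges immediately to the stated quadratic upper bound.

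For the lower bound, let $\gamma:[0,r(x)]\to M$ be a unit-speed minimizing geodesic from $x_0$ to $x$. Restricting the soliton equation to $\dot\gamma$ gives $\tfrac{d^{2}}{dt^{2}}(f\circ\gamma)=\tfrac12-\mathrm{Ric}(\dot\gamma,\dot\gamma)$. Integrating from $0$ to $r(x)$ and using $\nabla f(x_0)=0$ produces
\[
\langle \nabla f(x),\dot\gamma(r(x))\rangle \;=\; \tfrac12\, r(x) \;-\; \int_0^{r(x)}\mathrm{Ric}(\dot\gamma,\dot\gamma)\,dt.
\]
Combined with Cauchy--Schwarz and $|\nabla f|\le\sqrt{f}$, this will yield $\sqrt{f(x)} \ge \tfrac12 r(x) - c_2$ for $r(x)$ large, and hence the claimed quadratic lower bound --- provided the integral of Ricci along $\gamma$ is bounded above by a constant $c_2$ depending only on $n$ and $f(x_0)$.

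The main obstacle is exactly this uniform bound on $\int_0^{r(x)}\mathrm{Ric}(\dot\gamma,\dot\gamma)\,dt$, since no pointwise curvature bound is available a priori. The Cao--Zhou strategy is to use the second variation of arc length: test the minimality of $\gamma$ against the fields $\phi(t)E_i(t)$, where $E_1,\ldots,E_{n-1}$ are parallel orthonormal fields along $\gamma$ perpendicular to $\dot\gamma$, and $\phi$ is a piecewise-linear cutoff vanishing at the endpoints with $\phi\equiv 1$ on $[1,r(x)-1]$. Summing in $i$ and rearranging gives $\int_1^{r(x)-1}\mathrm{Ric}(\dot\gamma,\dot\gamma)\,dt \le 2(n-1)$. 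The two short end intervals $[0,1]$ and $[r(x)-1,r(x)]$ are then handled by writing $\mathrm{Ric}(\dot\gamma,\dot\gamma)=\tfrac12-\tfrac{d^{2}}{dt^{2}}(f\circ\gamma)$ and telescoping, the remaining boundary terms $\langle\nabla f,\dot\gamma\rangle$ at $\gamma(1)$ and $\gamma(r(x)-1)$ being controlled by $|\nabla f|\le\sqrt{f}$ and absorbed using the upper bound already proved in the previous step. This closes the argument and yields the lower bound with an explicit $c_1$ depending only on $n$ and $f(x_0)$.
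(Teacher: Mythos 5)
The paper offers no proof of Proposition 3.1 (it is quoted from Cao--Zhou), so the only benchmark is the original argument, which your proposal reconstructs essentially correctly: the upper bound via $|\nabla f|^2\le f$ (using $R\ge 0$, i.e.\ Proposition 3.2) and the $\tfrac12$-Lipschitz property of $\sqrt f$ is complete, and for the lower bound you have identified all the right tools (second variation against $\phi E_i$, the identity $\tfrac{d^2}{dt^2}(f\circ\gamma)=\tfrac12-\mathrm{Ric}(\dot\gamma,\dot\gamma)$, and absorption of boundary terms using $|\nabla f|\le\sqrt f$ together with the already-proved upper bound). One step of the assembly is off, though. The intermediate claim that $\int_0^{r(x)}\mathrm{Ric}(\dot\gamma,\dot\gamma)\,dt$ is bounded by a constant is not what the method delivers: second variation only controls the weighted integral $\int_0^{r}\phi^2\,\mathrm{Ric}(\dot\gamma,\dot\gamma)\,dt\le 2(n-1)$, and when you convert the unweighted tail over $[r-1,r]$ via $\mathrm{Ric}(\dot\gamma,\dot\gamma)=\tfrac12-\tfrac{d^2}{dt^2}(f\circ\gamma)$ and integrate by parts, the endpoint term $\langle\nabla f(x),\dot\gamma(r)\rangle$ reappears and cancels exactly against the same term in the first-integration identity. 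What survives is a lower bound on the weighted average $2\int_{r-1}^{r}\phi\,\tfrac{d}{dt}(f\circ\gamma)\,dt\ge\tfrac r2-c$, hence $\sqrt{f}\ge\tfrac r2-c$ at \emph{some} point of $\gamma([r-1,r])$, which you then transfer to $x$ by the $\tfrac12$-Lipschitz bound on $\sqrt f$, losing only an additive $\tfrac12$. Your Cauchy--Schwarz step applied at the single endpoint $x$ therefore does not close as written, but the correct rearrangement uses only the ingredients you already list. Two smaller points: do not assume $\nabla f(x_0)=0$ --- the existence of a critical point is not given (and would be circular, since properness of $f$ is the content of the proposition); the term $\langle\nabla f(x_0),\dot\gamma(0)\rangle$ is simply bounded by $\sqrt{f(x_0)}$ and absorbed into $c_1$. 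And since $f\ge|\nabla f|^2\ge 0$ under the normalization (2.4), the $\sqrt{-f(x_0)}$ in the displayed statement is evidently a misprint for $\sqrt{f(x_0)}$, which is exactly what your upper-bound computation produces.
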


\begin{proposition} {\bf (Chen \cite{BChen})}  Let $(M^n, g , f)$ be a complete noncompact gradient shrinking Ricci soliton satisfying (3.1). Then the scalar curvature is nonnegative, 
\[ R\geq 0 \quad  \text{ \ on } M\text{.} \]
\end{proposition}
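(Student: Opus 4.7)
The starting point is the elliptic identity from Lemma~2.2 in the shrinking case,
\[ \Delta_f R = R - 2|Rc|^2, \]
combined with the pointwise algebraic inequality $|Rc|^2 \geq R^2/n$ to yield the semilinear differential inequality
\[ \Delta_f R \leq R\bigl(1 - \tfrac{2}{n} R\bigr). \]
The whole plan is built on this inequality. The heuristic is clear: at an interior minimum $x_0$ of $R$ one would have $\Delta_f R(x_0) \geq 0$, hence $R(x_0)\bigl(1 - \tfrac{2}{n}R(x_0)\bigr) \geq 0$; a negative value $R(x_0) < 0$ would make the second factor strictly larger than $1$, yielding a contradiction. Thus any interior minimum must satisfy $R(x_0) \geq 0$, and therefore $R \geq 0$ on $M$.

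The real work is to promote this compact-style heuristic to the noncompact setting, where the infimum of $R$ need not be attained. I would follow one of the two routes alluded to in the introduction, due to B.-L.~Chen~\cite{BChen} and Pigola-Rimoldi-Setti~\cite{PRS} respectively.

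\emph{Weighted maximum principle route.} Since the Bakry-Emery Ricci tensor of the shrinker satisfies $Rc_f := Rc + \na^2 f = \tfrac{1}{2} g > 0$, the drift Laplacian $\Delta_f$ on $(M,g)$ enjoys a weak Omori-Yau maximum principle at infinity. Under the provisional hypothesis that $R$ is bounded below, this furnishes a sequence $\{x_k\} \subset M$ along which $R(x_k) \to \inf_M R$ and $\liminf \Delta_f R(x_k) \geq 0$; substituting into the inequality above and passing to the limit forces $\inf_M R \geq 0$. To remove the boundedness-below hypothesis, I would introduce an auxiliary function of the form $R + \epsilon \varphi(f)$, with $\varphi$ designed using the Cao-Zhou quadratic lower bound $f(x) \geq \tfrac{1}{4}(r(x) - c_1)^2$ from Proposition~3.1 and the identity $\Delta_f f = \tfrac{n}{2} - f$ from (3.3), so that the composite tends to $+\infty$ at spatial infinity, localising the argument to where a minimum is effectively attained.

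\emph{Ancient-solution route.} The shrinker structure generates a self-similar Ricci flow $g(t) = (1-t) \phi_t^* g$ which exists for all $t \in (-\infty, 1)$. Under this flow, $\partial_t R = \Delta R + 2|Rc|^2 \geq \Delta R + \tfrac{2}{n} R^2$. Comparing $R_{\min}(t)$ with the scalar ODE $\phi' = \tfrac{2}{n}\phi^2$, one sees that any negative initial value $R_{\min}(t_0) = R_0 < 0$ would blow up to $-\infty$ in backward time $\tfrac{n}{2|R_0|}$, contradicting eternal past existence. This forces $R_{\min}(0) \geq 0$ on the original soliton. The main obstacle in either route is the same non-compact quantitative control: one must either justify the weighted maximum principle without two-sided bounds on $R$ (using the coupling $R = f - |\na f|^2$ from (3.2) together with Proposition~3.1 to prevent $R$ from diverging too fast at infinity), or, in the parabolic route, extend the maximum principle to complete time slices without an a priori bound on curvature.
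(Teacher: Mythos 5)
First, a point of comparison: the paper does not prove this proposition at all — it is stated as a citation to Chen \cite{BChen} (with the strict positivity refinement attributed to \cite{PRS}) — so your argument has to stand on its own. Your compact-case heuristic is correct: from $\Delta_f R = R - 2|Rc|^2$ and $|Rc|^2 \ge R^2/n$ one gets $\Delta_f R \le R(1-\tfrac 2n R)$, and at an interior minimum a negative value of $R$ is impossible. Your two proposed completions are also pointed at the two proofs that actually exist in the literature (the weak maximum principle for $\Delta_f$ à la Pigola--Rimoldi--Setti, and Chen's ancient-solution argument).

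The genuine gap is that in both routes the entire content of the theorem sits in exactly the step you defer, and the patches you sketch do not close it. In the elliptic route, the auxiliary function $R+\epsilon\varphi(f)$ localizes the minimum only if you already know that $R$ cannot tend to $-\infty$ faster than $\varphi(f)$ tends to $+\infty$; but the coupling $R=f-|\na f|^2$ gives no lower bound on $R$, because bounding $R$ from below by $-C$ is \emph{equivalent} to bounding $|\na f|^2$ from above by $f+C$, and the inequality $|\na f|^2\le f$ is exactly what one deduces \emph{from} $R\ge 0$ — the argument is circular as stated. (Relatedly, you may only invoke the lower bound in Proposition 3.1, as you do; the upper bound on $f$ is itself proved using $|\na f|\le\sqrt f$, hence using the conclusion.) In the parabolic route, the ODE comparison for $R_{\min}(t)$ presupposes a maximum principle on complete noncompact time slices with no curvature hypotheses whatsoever; Chen's theorem consists precisely of supplying this, via a localized lower bound of the form $R\ge -\tfrac{n}{2(t-T_0)}$ proved on balls $B(x_0,r)$ with an explicit cutoff and a careful handling of the distance function. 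Without that ingredient (or an equivalent a priori growth control on $R^-$ feeding the weak maximum principle), the proposal is an accurate map of where the proof must go, but the one genuinely hard step has not been carried out.
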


\begin{remark}
Moreover, $R>0$ on $M$ unless $(M^n, g , f)$ is the Gaussian shrinking soliton (see  Pigola-Rimoldi-Setti \cite{PRS}). 
\end{remark}

\begin{proposition} {\bf (Chow-Lu-Yang \cite {CLY})} 
Let $(M^n, g, f)$ be a complete noncompact gradient shrinking Ricci soliton. 
Then there exists a constant $C>0$ such that 
\begin{equation}
R\geq  C/f \text{.}
\end{equation}
In particular, by Proposition 3.1,  $R$ has at most quadratic decay. 
\end{proposition}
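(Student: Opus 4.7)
The plan is to apply a minimum principle argument to the comparison function $w = R - C/f$ on the compact sub-level sets $\Omega_t = \{f\le t\}$ (compact by Cao-Zhou, Proposition~3.1). First I would reduce to the non-Gaussian case: by Chen's nonnegativity $R\ge 0$ (Proposition~3.2) combined with the Pigola-Rimoldi-Setti strict maximum principle (Remark~3.1), the scalar curvature is strictly positive on $M$, and the Gaussian shrinker (the only exception) does not admit any positive $C$ in the stated bound. Evaluating the constraint $R + |\nabla f|^2 = f$ at a critical point of $f$ (which exists since $f$ is proper) further yields $\min_M f > 0$, so $1/f$ is a bounded smooth function.

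Next I would derive the key differential inequality. The shrinker identity $\Delta_f R = R - 2|Rc|^2$ from Lemma~2.2 (with $\lambda=1/2$), together with the Cauchy-Schwarz bound $|Rc|^2 \ge R^2/n$ (from $R = \tr Rc \le \sqrt{n}\,|Rc|$), gives $\Delta_f R \le R - 2R^2/n$. Using $\Delta_f f = n/2 - f$ and $|\nabla f|^2 = f - R$, a direct computation yields $\Delta_f(1/f) = 1/f + (2-n/2)/f^2 - 2R/f^3$. Combining, for $w = R - C/f$,
\[ \Delta_f w \le w - \frac{2R^2}{n} + \frac{C(n/2 - 2)}{f^2} + \frac{2CR}{f^3}. \]

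I would then apply the minimum principle on $\Omega_t\setminus K$, where $K = \{f\le f_0\}$ is a fixed compact inner set and $C > 0$ is chosen small enough that $w \ge 0$ on $K$ (possible since $R$ has a positive lower bound on $K$). The minimum of $w$ on $\Omega_t\setminus K$ is attained either on $\partial K$ (where $w\ge 0$ by construction), on $\partial\Omega_t = \{f=t\}$ (where $R\ge 0$ forces $w\ge -C/t$, vanishing as $t\to\infty$), or at an interior point $p$, where $\Delta_f w(p) \ge 0$ combined with the displayed inequality forces any negative value of $w(p)$ to be of lower order in $1/f(p)$. Passing to $t\to\infty$ and refining $C$ then gives $R \ge C/f$ on all of $M$.

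The main obstacle is the interior-minimum step when $n\ge 4$, where the coefficient $(n/2 - 2)$ becomes nonnegative and no longer supplies the favourable sign needed to immediately contradict $w(p) < 0$. To close the argument one must strengthen the comparison function -- for instance by replacing $C/f$ with $C/(f + A)$ for a suitable offset $A$, or by including a small nonlinear correction involving $R^2$ that exploits the quadratic reaction term $-2R^2/n$ -- so as to absorb the $O(1/f^2)$ contribution and still extract a pointwise estimate. This delicate refinement is where the Chow-Lu-Yang argument genuinely exploits the shrinker structure beyond its most elementary consequences.
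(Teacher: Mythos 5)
First, a point of comparison: the paper does not prove this proposition at all --- it is quoted verbatim from Chow--Lu--Yang \cite{CLY} --- so your attempt can only be judged on its own merits. Your setup is sound: the reduction to $R>0$ via Chen and Pigola--Rimoldi--Setti, the identities $\Delta_f R = R - 2|Rc|^2 \le R - \tfrac{2}{n}R^2$, $\Delta_f f = \tfrac n2 - f$, $|\nabla f|^2 = f - R$, the computation of $\Delta_f(1/f)$, and the annulus minimum-principle scheme with the outer boundary contributing only $-C/t \to 0$ are all correct and are the right ingredients. But as you yourself concede, the interior-minimum step does not close except when $n=3$, and this is a genuine gap rather than a ``delicate refinement'': for $n\ge 5$ the term $C(n/2-2)/f^2$ has the wrong sign and is not dominated by anything, while even for $n=4$ (the only case the paper actually uses!) the inequality $0\le w(p) - \tfrac{2R^2}{n} + \tfrac{2CR}{f^3}$ at an interior minimum only yields $|w(p)| \le 2C^2 f(p)^{-4}$ \emph{at the minimum point} $p$; since $f(p)$ can be as small as the inner radius $f_1$, propagating $w \ge w(p)$ over the annulus gives only $R \ge C/f - O(f_1^{-4})$, a bound with an additive constant error that does not imply $R\ge c/f$ at infinity. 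So the proposal, as written, does not prove the statement in the dimension that matters here.

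The fix is, however, exactly the first one you gesture at, and it is worth recording that it works with no further delicacy: replace $1/f$ by $1/h$ with $h = f - \tfrac n2$. Since $\Delta_f h = -h$ exactly, one gets
\begin{equation*}
\Delta_f\bigl(h^{-1}\bigr) \;=\; -\frac{\Delta_f h}{h^2} + \frac{2|\nabla h|^2}{h^3} \;=\; \frac{1}{h} + \frac{2(f-R)}{h^3},
\end{equation*}
so that for $w = R - C/h$,
\begin{equation*}
\Delta_f w \;\le\; w - \frac{2R^2}{n} - \frac{2C(f-R)}{h^3},
\end{equation*}
and now \emph{every} correction term has the favourable sign in every dimension, because $f - R = |\nabla f|^2 \ge 0$ (and is strictly positive at any point where $R < C/h$ and $f$ is large). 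An interior negative minimum is therefore impossible outside a compact set, and your boundary analysis finishes the argument; since $1/h \ge 1/f$, the conclusion $R \ge C/h$ implies the stated bound. This is essentially the Chow--Lu--Yang argument, and the moral is that the ``shrinker structure'' being exploited is nothing more than the exact identity $\Delta_f(f - \tfrac n2) = -(f - \tfrac n2)$, which eliminates the $O(f^{-2})$ term whose sign you could not control.
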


Now we can state the main curvature estimates  for 4-dimensional gradient shrinking Ricci solitons due to Munteanu and Wang \cite{MW}. 

\begin{theorem} {\bf (Munteanu-Wang \cite{MW})} 
Let $\left( M^4, g, f\right) $ be a $4$-dimensional complete noncompact gradient shrinking Ricci soliton with bounded scalar curvature $R\le R_0$. Then there exists
a constant $C>0$ such that 
\begin{equation*}
\left\vert \mathrm{Rm}\right\vert \leq C\, R \quad {\mbox{and}} \quad 
\left\vert \mathrm{\na Rm}\right\vert \leq C\, R \quad  \text{ \ on } M\text{.}
\end{equation*}
\end{theorem}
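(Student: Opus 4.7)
The plan is to combine the pointwise inequality of Lemma 2.5 with weighted $L^p$/Moser estimates on $(M, e^{-f}dV_g)$. Three features make the weighted framework natural here: $\Delta_f$ is self-adjoint for $e^{-f}dV_g$; by Proposition 3.1, $f$ grows like $r^2/4$, so weighted integrals of polynomially growing quantities converge; and by Proposition 3.2 together with Remark 3.1 we may assume $R>0$ everywhere (otherwise we are on the Gaussian soliton and $Rm=0$), so division by positive powers of $R$ is legitimate. On top of this, the quadratic lower bound $R\ge C/f$ from Proposition 3.3 and the hypothesis $R\le R_0$ mean that $R$ is controlled from both sides by $f$.

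First I would prove the Ricci bound $|Rc|^2\le CR$. The function to work with is $u := |Rc|^2 R^{-a}$ for a parameter $a\in(0,1)$. Using the identities (3.4) for $\Delta_f R$ and $\Delta_f |Rc|^2$, together with (2.5) for $\Delta_f R^{-a}$, a direct computation yields a differential inequality whose main positive term is of the form $2(1-a)u$ plus good gradient terms, and whose main obstruction is a cubic term $-C|Rm|\,u$. Pairing with a test function $u^p\varphi^2 e^{-f}$ for a Lipschitz cutoff $\varphi$ and integrating by parts, one exploits $R\le R_0$ together with Lemma 2.5 (which bounds $|Rm|$ by $|Rc|$ plus a gradient correction) to absorb the bad term. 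This yields $\int u^{p+1}e^{-f} < \infty$ for all $p$, and a standard weighted Moser iteration then gives $\sup_M u <\infty$. A bootstrap in the exponent $a$ pushes the estimate all the way to $a=1$, producing $|Rc|^2\le CR$.

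Next, the derivative bound $|\nabla Rm|\le CR$ follows by the same recipe applied to $|\nabla Rm|^2 R^{-a}$, with the required differential inequality supplied by Lemma 2.4. The cubic term $|Rm|\,|\nabla Rm|^2$ is now controllable via Step 1 and Lemma 2.5, so the weighted Moser iteration goes through and yields $|\nabla Rm|\le CR$; in particular $|\nabla Rc|\le CR$. To conclude, outside a large compact set Proposition 3.1 gives $|\nabla f|\ge 1$, and Lemma 2.5 combined with the two previous steps yields $|Rm|\le A_0\bigl(|Rc|+|\nabla Rc|\bigr)\le CR$. On the remaining compact set, $R$ is bounded below by a positive constant (since $f$ is bounded there and $R\ge C/f$) while $|Rm|$ is automatically bounded above, so $|Rm|\le CR$ is trivial.

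The main obstacle is Step 1: absorbing the cubic term $|Rm|\,|Rc|^2$ \emph{before} the bound $|Rm|\le C|Rc|$ is available. One is forced into an iterative improvement of the exponent $a$, each round using the previous weaker bound together with $R\le R_0$ and $R\ge C/f$ to gain a little. Making this scheme terminate at $a=1$ with uniform constants, while keeping the weighted cutoff integrations under control so that the $e^{-f}$ factor genuinely absorbs the growth of $f$ rather than merely canceling a derivative, is the technical heart of the Munteanu-Wang argument.
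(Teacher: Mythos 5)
There are two concrete gaps, and they sit exactly where the real difficulty of Munteanu--Wang's proof lies. The first is in your final assembly: you conflate $|Rc|^2\le CR$ with $|Rc|\le CR$. Your Step 1 yields $|Rc|\le CR^{1/2}$, and since $R$ decays at infinity (generically like $1/f$), $R^{1/2}$ is far larger than $R$ there; feeding $|Rc|\le CR^{1/2}$ into Lemma 2.5 gives at best $|Rm|\le CR^{1/2}$, i.e.\ $|Rm|^2\le CR$ (the paper's Step 4), not the claimed $|Rm|\le CR$. The sharp first-power estimate $|Rc|\le CR$ is the hardest part of the argument: in the paper it requires first the gradient estimate $|\nabla \ln R|^2\le C\ln(f+2)$ and then a maximum principle for $|Rc|^2R^{-2}$ with respect to the modified drift operator $\Delta_{f-2\ln R}$ (Steps 6--7), neither of which appears in your plan. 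Relatedly, reaching even $|Rc|^2\le CR$ is not a bootstrap over fixed exponents $a<1$ (the constants blow up as $a\to 1$, cf.\ Remark 3.2); the paper's mechanism is to let the exponent depend on the cutoff scale, $a=1-C/\rho$, and to use $R\ge C/f\ge C/\rho$ on $D(2\rho)$ to make $R^{a-1}$ uniformly bounded.

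The second gap is in your treatment of $|\nabla Rm|$. The cubic term $|Rm|\,|\nabla Rm|^2$ is not controllable from Step 1 plus Lemma 2.5 alone, because Lemma 2.5 bounds $|Rm|$ by $|Rc|+|\nabla Rc|/|\nabla f|$ and $|\nabla Rc|\le|\nabla Rm|$ is precisely the unknown; one must first establish the a priori bounds $|Rm|\le C$ and $|\nabla Rm|\le C$, which the paper obtains by coupling $|Rm|$ with $|Rc|^2$ so that the good term $+|\nabla Rc|^2$ coming from $\Delta_f|Rc|^2$ absorbs the $f^{-1}|\nabla Rc|^2$ error (Step 2). Even granting those bounds, the resulting differential inequality for $w=|\nabla Rm|^2R^{-1}$ or $|\nabla Rm|^2R^{-2}$ is only \emph{linear}, of the form $\Delta_f w\ge w-C$, with no quadratic good term to drive either a maximum principle or a Moser iteration to a global bound; this is exactly the obstruction the paper flags after (3.24), and it is overcome there by the linear cutoff $\psi=1-f/\rho$, a case analysis at the maximum point of $\psi^2w$, and an essential use of the Chow--Lu--Yang bound $Rf\ge C$ (Steps 5 and 9). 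Your proposal asserts that the weighted iteration "goes through" at precisely this point without addressing either obstruction, so the argument as written does not close.
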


In order to compare the similarities and the differences among shrinking, expanding, and steady solitons in the $4$-dimensional curvature estimates, we shall present here a sketch of the proof of Theorem 3.1 by Munteanu and Wang (taking a good amount of detailed arguments from \cite{MW} in the relevant parts of the proof), and refer the interested readers to their original paper \cite{MW} for a complete proof.  

\medskip
\noindent
{\it Sketch of the Proof of Theorem 3.1 \!.} \ We denote by
\begin{equation}
D(t):= \{x\in M : f(x) \leq t \}. 
\end{equation}
By (3.2), Proposition 3.1, and the scalar curvature assumption $R\leq R_0$, it is obvious that there exists $r_{0}>0$, depending only
on $R_0$, such that 
\begin{equation}
\left\vert \nabla f\right\vert \geq \frac{1}{2}\sqrt{f}\geq 1\text{ \ on \ }%
M\backslash D\left( r_{0}\right) . 
\end{equation}%
The entire proof consists of a series of elaborate pointwise estimates on $Rc$, $Rm$, $\na Rm$, as well as a gradient estimate on the scalar curvature $R$.

\medskip
{\bf Step 1} ({\it Initial $|Rc|$ estimate}).  There exists  a constant $C>0$ such that
\begin{equation}
|Rc|^2 \leq C R^{1/2} \quad \text{on} \ M . \label{Rc}
\end{equation} 

First of all, note that by  (\ref{id}), Lemma \ref{Curv} and  (3.8), we have
\begin{eqnarray}
\Delta _{f}\left\vert \mathrm{Rc}\right\vert ^{2} &\geq &2\left\vert \nabla 
\mathrm{Rc}\right\vert ^{2}-c\left\vert \mathrm{Rm}\right\vert \left\vert 
\mathrm{Rc}\right\vert ^{2}  \label{F1} \\
&\geq &2\left\vert \nabla \mathrm{Rc}\right\vert ^{2}-\frac{c}{\sqrt{f}}%
\left\vert \nabla \mathrm{Rc}\right\vert \left\vert \mathrm{Rc}\right\vert
^{2}   -c\left\vert \mathrm{Rc%
}\right\vert ^{3}.  \notag
\end{eqnarray}%
For any $a \in (0, 1)$, set  $u:=|Rc|^2/R^{a}$. Then direct computations  and using (2.5) and (3.8) yield the following differential inequality, 
\begin{equation}
\Delta _{f}u\geq \left( 2a-\frac{c}{1-a}\frac{R}{ f}\right)
u^{2}R^{a-1}-c\,u^{\frac{3}{2}}\,R^{\frac{a}{2}}-c\,u,  \quad \text{on}\  M\backslash D\left(r_{0}\right) . 
\end{equation}%
 Here, $c>0$ is a universal constant. 
Now, setting $a=1/2$, then 
\begin{eqnarray*}
\Delta _{f}u & \geq & \left( 1-C/{f}\right)
u^{2}R^{-1/2}-C\,u^{\frac{3}{2}}\,-C\,u \\
& \geq & \frac 1 2 u^{2}R^{-1/2}-C\,u^{\frac{3}{2}}\,-C\,u  \quad \text{on} \ M\backslash D\left( r_{1}\right), 
\end{eqnarray*}
where $u=R^{-1/2}|Rc|^2$, $C> 0$ depends only on $R_0$, and $r_1>r_0>0$ is chosen so that 
$$ 1-C/f\geq 1/2  \quad \text{on}\ M\backslash D\left( r_{1}\right),$$
which is possible by Proposition 3.1.

Next, let $\varphi (t)$ be a nonnegative  smooth  function on $\mathbb R^{+}$  defined by
$$ {\varphi (t) =\left\{
       \begin{array}{ll}
  1, \ \ \quad  \rho\le t\le 2\rho ,\\[4mm]
    0, \ \ \quad  0\le t\le \rho/2 \ \ \mbox{and} \ \ t\ge 3\rho,
       \end{array}
    \right.}$$
such that 
\begin{equation}
 t^2 \left(|\varphi'(t)|^2+|\varphi''(t)|\right)\le c 
\end{equation}%
for some universal constant $c$ and arbitrarily large $\rho>2r_1$. Now take $\varphi=\varphi (f(x))$ as a cut-off function, with support in $D(3\rho)\backslash D\left(\rho/2\right)$. Note that 
\begin{equation}
 |\nabla \varphi|=|\varphi'\nabla f|\leq  \frac {c} {\rho} |\na f| \leq  \frac {c} {\sqrt{\rho}} \qquad  \mbox{and} 
\end{equation}%
\begin{equation}
|\Delta_f (\varphi)| =|\varphi' \Delta_f f +\varphi''|\nabla f|^2|\le \frac {c} {\rho}f +  \frac {c} {\rho^2}|\na f|^2\le 2c 
\end{equation}%
on $D(3\rho)\backslash D(\rho/2)$.

Set $G=\varphi^2 u$. Then direct computations yield that
\begin{eqnarray*} \varphi^2 \Delta_f G 
&\ge & \frac 1 2 G^2R^{-1/2}  -C G ^{3/2}-C G +2\nabla G\cdot\nabla \varphi^2.
\end{eqnarray*}
Now it follows from the standard maximum principle argument that  $G\le C$ on $D(3\rho)$ for some constant $C>0$  depending on  $R_0$, but independent of $\rho$. Hence, on $D(2\rho)\backslash D(\rho)$,  
$$ R^{-1/2} |Rc|^2 =G \leq C.$$
Since $\rho>0$ is arbitrarily large, we get 
$$  |Rc|^2\le C R^{1/2} \le CR^{1/2}_0  \quad \text{on} \ M .$$

\begin{remark} By the same argument, one can actually show that 
\begin{equation}%
|Rc|^2 \leq C_a R^{a} \quad \text{on} \ M , 
\end{equation}%
for any $a\in (0, 1)$, with $C_a \to \infty$ as $a\to 1$.
\end{remark}

\medskip
{\bf Step 2} ({\it Initial $|Rm|$ and $|\na Rm|$ estimates}). There exists $C>0$  such that 
\begin{equation}
 \left\vert \mathrm{Rm}\right\vert +\left\vert \nabla \mathrm{Rm}\right\vert  \leq C \quad \text{on} \ M.
\end{equation}%

Using (\ref{id}), one sees that 
\begin{equation*}
\Delta _{f}\left\vert \mathrm{Rm}\right\vert \geq -c\left\vert \mathrm{Rm}%
\right\vert ^{2}.
\end{equation*}%
Rewrite this as 
\begin{equation}
\Delta _{f}\left\vert \mathrm{Rm}\right\vert \geq \left\vert \mathrm{Rm}%
\right\vert ^{2}-\left( c+1\right) \left\vert \mathrm{Rm}\right\vert ^{2}.
\label{R1}
\end{equation}%
By Lemma \ref{Curv} and (\ref{Rc}), we have
\begin{equation*}
\left\vert \mathrm{Rm}\right\vert ^{2}\leq C\left( \frac{1}{f}\left\vert
\nabla \mathrm{Rc}\right\vert ^{2}+1\right) .
\end{equation*}%
Plugging into (\ref{R1}), we get

\begin{equation}
\Delta _{f}\left\vert \mathrm{Rm}\right\vert \geq \left\vert \mathrm{Rm}%
\right\vert ^{2}-\frac{C}{f}\left\vert \nabla \mathrm{Rc}\right\vert ^{2}-C.
\label{R2}
\end{equation}%
On the other hand, we know from (\ref{F1}) and (\ref{Rc}) that 
\begin{equation}
\Delta _{f}\left\vert \mathrm{Rc}\right\vert ^{2}\geq \left\vert \nabla 
\mathrm{Rc}\right\vert ^{2}-C.  \label{R3}
\end{equation}%
Therefore, combining (\ref{R2}) and (\ref{R3}), we obtain
\begin{eqnarray*}
\Delta _{f}\left( \left\vert \mathrm{Rm}\right\vert +\left\vert \mathrm{Rc}%
\right\vert ^{2}\right) 
&\geq &\frac{1}{2}\left( \left\vert \mathrm{Rm}\right\vert +\left\vert 
\mathrm{Rc}\right\vert ^{2}\right) ^{2}-C.
\end{eqnarray*}%
That is, the function 
\begin{equation*}
v:=\left\vert \mathrm{Rm}\right\vert +\left\vert \mathrm{Rc}\right\vert ^{2}
\end{equation*}%
satisfies the differential inequality 
\begin{equation*}
\Delta _{f}v\geq \frac{1}{2}v^{2}-C \quad \text{on} \ M\backslash D\left(
r_{0}\right).
\end{equation*}%
By applying the same cut-off function and the maximum principle argument as in Step 1 above, we conclude that $v\le C$ on $M\backslash D\left( r_{0}\right) .$ This implies 
\begin{equation}
\sup_{M}\left\vert \mathrm{Rm}\right\vert \leq C\text{.}  \label{R-1}
\end{equation}%

Next, using (3.20) and (\ref{id}), we have 
\begin{equation*}
\Delta _{f}\left\vert \nabla \mathrm{Rm}\right\vert \geq -C\left\vert \nabla 
\mathrm{Rm}\right\vert .
\end{equation*}%
We also know from (\ref{id}) that 
\[ \Delta _{f}\left\vert \mathrm{Rm}\right\vert ^{2} \geq 2\left\vert \nabla 
\mathrm{Rm}\right\vert ^{2}-c\left\vert \mathrm{Rm}\right\vert ^{3} 
\geq 2\left\vert \nabla \mathrm{Rm}\right\vert ^{2}-C\text{. } \]
Hence,
\begin{equation*}
\Delta _{f}\left( \left\vert \nabla \mathrm{Rm}\right\vert +\left\vert 
\mathrm{Rm}\right\vert ^{2}\right) \geq \left( \left\vert \nabla \mathrm{Rm}%
\right\vert +\left\vert \mathrm{Rm}\right\vert ^{2}\right) ^{2}-C\text{. }
\end{equation*}%
Now a maximum principle argument as above shows that $\left\vert \nabla 
\mathrm{Rm}\right\vert +\left\vert \mathrm{Rm}\right\vert ^{2}$ is bounded
on $M.$ So 
\begin{equation*}
\left\vert \nabla \mathrm{Rm}\right\vert \leq C  \label{T2}
\end{equation*}%
for some constant $C>0$.

\medskip
{\bf Step 3} ({\it Improved $|Rc|$ estimate}). There exists $C>0$ such that 
\begin{equation}
|Rc|^2 \leq C R \quad \text{on} \ M .
\end{equation}

For $u=|Rc|^2/R^{a}$, let us recall the following differential inequality from (3.11):  
\begin{equation*}
\Delta _{f}u\geq \left( 2a-\frac{c}{1-a}\frac{R}{f}\right)
u^{2}R^{a-1}-c\,u^{\frac{3}{2}}\,R^{\frac{a}{2}}-c\,u,  \quad \text{on}\  M\backslash D\left(r_{0}\right). 
\end{equation*}%
Now, we take the same cut-off function $\varphi$ as in Step 1 above and set $G=\varphi^2 u$.  Then, by direct computations, we have 
\begin{eqnarray*} \varphi^2 \Delta_f G 
&\ge &  \left( 2a-\frac{c}{1-a} \frac {R}{f}\right) G^2R^{a-1}  -C G ^{3/2}-C G +2\nabla G\cdot\nabla \varphi^2.
\end{eqnarray*}
Since $\varphi$ has support in $D(3\rho)\backslash D\left(\rho/2\right)$, we know that $f\ge \rho/2 $ on the support of $\varphi$. Hence, one may choose $a=1-C/\rho$ for $C>0$ sufficiently large so that 
$$ 2a-\frac{c}{1-a} \frac {R}{f}\geq 1  \quad \text{on}\ D(3\rho)\backslash D\left(\rho/2\right).$$
Consequently, 
\begin{eqnarray*} \varphi^2 \Delta_f G & \geq &  G^2R^{a-1}  -C G ^{3/2}-C G +2\nabla G\cdot\nabla \varphi^2.
\end{eqnarray*}
Since $a<1$ and $R^{a-1}\geq R_0^{a-1}$, it follows from the standard maximum principle that $G\leq C$ on 
$D(2\rho)\backslash D(\rho)$. Hence, on $D(2\rho)\backslash D(\rho)$, 
\[ R^{-1} |Rc|^2= G R^{a-1} \le C R^{a-1}.\]
On the other hand, by Proposition 3.3,  $R\ge C/f$ on $M$.  
Thus, since $a-1=-C/\rho$ and $R\ge C/\rho$ on $D(2\rho)$, it follows that $R^{a-1} \le C$ for some $C>0$. Therefore, 
$$|Rc|^2 \leq C R \quad \text{on} \ M.$$

\medskip
{\bf Step 4} ({\it Improved $|Rm|$ estimate}). There exists $C>0$ such that 
\begin{equation}
|Rm|^2 \leq C R \quad \text{on} \ M .
\end{equation}

This follows immediately from Lemma 2.5,  the estimate $|Rc|^2\le CR$ in Step 3, the estimate $|\na Rm|\le C$ in Step 2 and the scalar curvature lower bound  $R\ge C/f$.  

\medskip
{\bf Step 5} ({\it Improved $|\na Rm|$ estimate}). There exists $C>0$ such that 
\begin{equation}
|\na Rm|^2 \leq C R \quad \text{on} \ M .
\end{equation}

By (\ref{id})  and direct computations, one obtains 
\[ \Delta _{f}\left( \left\vert \nabla \mathrm{Rm}\right\vert ^{2}R^{-1}\right) 
\geq  2\left\vert \nabla \mathrm{Rm}\right\vert ^{2}R^{-1}-c\left\vert 
\mathrm{Rm}\right\vert \left\vert \nabla \mathrm{Rm}\right\vert ^{2}R^{-1}. \]
On the other hand, using the estimates $|Rm|^2\le CR$ in Step 4 and $|\na Rm|\le C$  in Step 2, one gets
\[ c\left\vert \mathrm{Rm}\right\vert \left\vert \nabla \mathrm{Rm}\right\vert
^{2}R^{-1} \leq  C\left\vert \nabla \mathrm{Rm}\right\vert R^{-\frac{1}{2}}
\leq \left\vert \nabla \mathrm{Rm}\right\vert ^{2}R^{-1}+C.\]
Thus, the function 
\begin{equation*}
w:=\left\vert \nabla \mathrm{Rm}\right\vert ^{2}R^{-1}-C
\end{equation*}%
satisfies 
\begin{equation}
\Delta _{f}w\geq w.  \label{A5}
\end{equation}%
Note that the RHS of the above differential inequality (3.24) is only linear in $w$, so the standard maximum principle argument as in Step 1 or Step 2 does not apply. Nonetheless, Munteanu and Wang were able to adapt the maximum principle argument to achieve the desired estimate (3.23) by making a clever use of the scalar curvature lower bound $R\ge C/f$ due to Chow-Lu-Yang \cite{CLY} as follows. 
Consider 
$$ {\psi(t) =\left\{
       \begin{array}{ll}
  1-t/{\rho}, \ \ \ \ \! 0\le t\le \rho ,\\[4mm]
    0, \ \ \ \!\qquad  \quad t\ge \rho. 
       \end{array}
    \right.}$$
Then the cut-off function $\psi \left( f\right) $ on $M$
satisfies 
\begin{equation}
\left\vert \nabla \psi \right\vert =\frac{\left\vert \nabla f\right\vert }{%
\rho}  \label{A5'} \quad \text{and} \quad \Delta _{f}\psi =\frac{1}{\rho}\left( f-2\right) . 
\end{equation}
Therefore, for $G:=\psi ^{2}\,w,$ using (\ref{A5}), we have
\begin{equation}
\psi^2\Delta _{f}G\geq \left( \psi^2 +\psi \frac{2}{\rho}\left( f-2\right) -6\left\vert \nabla \psi \right\vert ^{2}\right) G+2
\left\langle \nabla G,\nabla \psi^2 \right\rangle .  \label{A6}
\end{equation}%

{\bf Case 1}: $G\left( q\right) <0$ at the maximum point $q$ of $G.$ Then $w\le 0$ on $D\left( R\right)$, which 
implies that 
\begin{equation*}
\sup_{D\left( \rho/{2}\right) }\left( \left\vert \nabla \mathrm{Rm}%
\right\vert ^{2}R^{-1}\right) \leq C.
\end{equation*}%

{\bf Case 2}:  $G\left( q\right) >0$ at the maximum point $q$ of $G.$ Then (%
\ref{A6}) implies that 
\begin{equation}
\frac{2}{\rho}\left( f-2\right) \psi \leq 6\left\vert \nabla \psi \right\vert
^{2}\leq 6\frac{1}{\rho^{2}}f.  \label{A7}
\end{equation}

If $q\in D\left( r_{0}\right) $, then 
\begin{eqnarray*}
\sup_{D\left( {\rho}/{2}\right) }\left( \left\vert \nabla \mathrm{Rm}%
\right\vert ^{2}R^{-1}\right) &\leq & C+4\,\sup_{D\left({\rho}/{2}\right)
}G \\
& \leq &  C+4\,\sup_{D\left( r_{0}\right) }G \  \leq  \  C.
\end{eqnarray*}%

If $q\in M\backslash D\left( r_{0}\right)$, then $%
f\left( q\right) -2\geq \frac{1}{2}f\left( q\right)$.  By (\ref{A7}), $\psi
\left( q\right)\leq 6/ \rho.$ This shows that $f\left( q\right) \geq \rho-6.$
Hence, since $|\na Rm|\le C$ and $Rf\ge C >0$, 
\begin{eqnarray*}
\frac{1}{4}\sup_{D\left( \rho/{2}\right) }\left( \left\vert \nabla 
\mathrm{Rm}\right\vert ^{2}R^{-1}-C\right) \leq \sup_{D\left({\rho}/{2}%
\right) }G & \leq  & G\left( q\right)  \\
&\leq &\frac{36}{\rho^{2}}\sup_{D\left( R\right) }\left( \left\vert \nabla 
\mathrm{Rm}\right\vert ^{2}R^{-1}\right) \leq \frac{C}{\rho},
\end{eqnarray*}%
Therefore, $\sup_{D\left(\rho/{2}\right) }\left( \left\vert \nabla \mathrm{Rm}%
\right\vert ^{2}S^{-1}\right) \leq C$ \ also when $G\left( q\right) >0$. In conclusion,  
\begin{equation*}
\sup_{D\left(\rho/{2}\right) }\left( \left\vert \nabla \mathrm{Rm}%
\right\vert ^{2}S^{-1}\right) \leq C.
\end{equation*}%
This completes Step 5.

\medskip
{\bf Step 6} ({\it Gradient estimate for $R$}). There exists $C>0$ such that 
\begin{equation}
|\na \ln R|^2 \leq C \ln (f+2) \quad \text{on} \ M .
\end{equation}

This is achieved by  adopting an argument in \cite{M-MTW}. The proof also depends on estimates (3.21) and (3.23), and the scalar curvature lower bound $R\ge C/f$ by Chow-Lu-Yang  \cite {CLY}. We refer the interested readers to the original paper of Munteanu-Wang \cite{MW} for details. 

\medskip
{\bf Step 7} ({\it Sharp $|Rc|$ estimate}). There exists $C>0$ such that 
\begin{equation}
|Rc| \leq C R \quad \text{on} \ M . 
\end{equation}

By direct computations and using the estimate  $|Rm|\le C$ in Step 2, the function $u:=\left\vert \mathrm{Ric}\right\vert ^{2}R^{-2}$
satisfies the differential inequality 
\begin{equation}
\ \ \Delta_{f-2\ln R} \left(u\right) \geq 3\,u^{2}\,R-c\,u\,R \label{B1}
\end{equation}%
on $M\backslash D\left(r_{0}\right) $. Here, $c>0$ is a universal constant. 

Now, by taking the same cut-off function $\psi \left( f\right) $ as in  Step 5 and applying the gradient estimate (3.28), we have
\begin{eqnarray*}
\Delta _{{f-2\ln R}} \left(\psi\right)  &=&\Delta _{f}\psi +2\left\langle \nabla \ln R,\nabla \psi
\right\rangle \\
&\geq &\frac{1}{\rho}\left( f-2\right) -\frac{2}{\rho}\left\vert \nabla \ln
R\right\vert \left\vert \nabla f\right\vert  \\
& \geq & \frac{1}{\rho}\left( f-2\right) -\frac{C}{\rho}\sqrt{f}\, \ln \left(f+2\right) . 
\end{eqnarray*}%
This shows that there exists a constant $t_{0}>0$ so that on $D\left(
\rho\right) \backslash D\left( t_{0}\right) ,$%
\begin{equation}
\Delta _{{f-2\ln R}} \left(\psi\right) \ge f/2\rho \geq 0.  \label{B2}
\end{equation}%
Using (\ref{B1}) and (\ref{B2}), for the function $G:=\psi ^{2}u$ we have
that on $M\backslash D\left( t_{0}\right) ,$%
\begin{eqnarray*}
\psi^{2}\Delta_{{f-2\ln R}} \left(G\right) & \geq & 3G^{2} R-cGR+G\Delta _{{f-2\ln R}} \left(\psi^{2}\right)+2 \psi ^{2}
\nabla u\cdot\nabla \psi ^{2} \label{B3} \\
& \geq & 3G^{2}R-cGR+2\nabla G \cdot \nabla \psi ^{2}
-8\left\vert \nabla \psi \right\vert ^{2}G.  \notag
\end{eqnarray*}%
By (\ref{A5'}) and the scalar curvature lower bound $Rf\geq C>0$, we have 
\[ \left\vert \nabla \psi \right\vert ^{2}G \leq \frac{1}{\rho}G \leq \frac{1}{C}RG. \]
Therefore, 
\begin{equation*}
\psi ^{2}\Delta _{{f-2\ln R}} \left(G\right) \geq \left( 3G^{2}-cG\right) R + 2\nabla
G \cdot \nabla \psi ^{2} .
\end{equation*}%
Now the maximum principle implies that $G$ must be bounded on $M\backslash D\left( t_{0}\right)$, 
and it follows that $|Rc|\le CR$.

\medskip
{\bf Step 8} ({\it Sharp $|Rm|$ estimate}). There exists $C>0$ such that 
\begin{equation}
|Rm| \leq C R   \qquad \text{on} \ M .
\end{equation}

This follows immediately from Lemma 2.5, the estimates $|\na Rm|^2 \le CR$ in Step 5 and $|Rc|\le CR$  in Step 7, and the scalar curvature lower bound $R\geq C/{f}$.

\medskip
{\bf Step 9} ({\it Further improved $|\na Rm|$ estimate}). There exists $C>0$ such that 
\begin{equation}
|\na Rm| \leq C R \quad \text{on} \ M .
\end{equation}
In particular, 
\begin{equation}
|\na \ln R| \leq C \quad \text{on} \ M .
\end{equation}

By (\ref{id}) and direct computations, the function $w:=\left\vert \nabla \mathrm{Rm}\right\vert ^{2}R^{-2}$ satisfies
the differential inequality 
\begin{eqnarray}
\Delta _{{f-2\ln R}} \left( w \right)&\geq &w-c\left\vert \mathrm{Rm}\right\vert w  \label{u3} \\
&\geq &w\left( 1-C_1R\right) .  \notag
\end{eqnarray}
Note, like (3.24), we again have a term linear in $w$ on the RHS of (3.35). 
By using the same cut-off function $\psi \left( f\right) $ with support in $D(\rho)$ as before, the function $G:=\psi ^{2}w$ satisfies
\begin{equation*}
\psi ^{2}\Delta _{{f-2\ln R}} \left( G \right) \geq \psi ^{2} G\left( 1-C_1R\right) +2 \psi \Delta _{{{f-2\ln R}}}\left(\psi
\right) G-6\left\vert \nabla \psi \right\vert ^{2}G+2\nabla G\cdot\nabla \psi ^{2} .  \label{u4}
\end{equation*}%
Let $q\in D\left( \rho\right) $ be the maximum point of $G$. If $q\in D\left(
r_{0}\right),$ then $w$ is bounded on $D\left( 
{\rho}/{2}\right) .$ So we only need to consider when $q\in
D\left( \rho\right) \backslash D\left( r_{0}\right)$. 
Furthermore, if $1-C_1R(q)\le 0$ so that $R^{-1}(q_0)\le C_1$, then $G(q)\le C_1^2|\na Rm|^2(q) \le C$, so  $G\leq G(q) \leq
C$ on $D\left( \rho\right) $. Again, this proves that $w$ is bounded on $%
D\left(\rho/{2}\right) $. So we may assume that $%
1-C_1R\left( q\right) \geq 0$. Now the maximum principle implies that at $q$   %
\begin{equation}
0\geq \psi ^{}\Delta _{f-2\ln R}\left( \psi \right) -3\left\vert \nabla
\psi \right\vert ^{2}.  \label{u5}
\end{equation}
Since $q\in D\left( \rho\right) \backslash D\left( r_{0}\right) $, (3.31) holds. Therefore, (%
\ref{u5}) implies that at $q,$ 
\begin{equation*}
\frac{f}{\rho}\psi \leq 6\left\vert \nabla \psi \right\vert ^{2}\leq 6\frac{f}{%
\rho^{2}} \quad \text{or}\quad \psi\le 6/\rho .
\end{equation*}%
This means that $f\left( q\right) \geq \rho-6$ and
\begin{eqnarray*}
\sup_{D\left( {\rho}/{2}\right) }\left( \left\vert \nabla 
\mathrm{Rm}\right\vert ^{2}R^{-2}\right) 
&\leq & 4 G\left( q\right) \\
&\leq &\frac{144}{\rho^{2}}\sup_{D\left( \rho\right) }\left( \left\vert \nabla 
\mathrm{Rm}\right\vert ^{2}R^{-2}\right) 
\leq \frac{c}{\rho},
\end{eqnarray*}%
where in the above last line we have used the estimate $|\na Rm|^2\le CR$ in Step 5  and the scalar curvature lower bound $Rf\geq
C>0$ by Chow-Lu-Yang \cite{CLY}. This again proves that $\left\vert \nabla \mathrm{Rm}\right\vert
^{2}R^{-2}$ is bounded on $D(\rho/2)$. In conclusion, %
\begin{equation*}
\sup_{D\left( {\rho}/{2}\right) }\left( \left\vert \nabla \mathrm{Rm}%
\right\vert^2 R^{-2}\right) \leq C.
\end{equation*}%
Since $\rho$ is arbitrary, this proves the estimate (3.33). 
\hfill \qedsymbol{}

\medskip

The fact that the curvature operator $Rm$ of 4-dimensional gradient shrinking Ricci solitons enjoys similar control (by the scalar curvature $R$) as in the dimension three case provides the hope for a possible classification of  (noncompact) Ricci shrinkers. As an application of Theorem 3.1, Munteanu and Wang  \cite{MW} proved sharp decay estimates for the Riemann curvature tensor and its covariant derivatives under the assumption that
the scalar curvature $R$ goes to zero at infinity. Indeed, they showed that if scalar curvature $R$ goes to zero at infinity, then it must decay at least quadratically as well: $R\le c/f$ for some constant $c>0$. By Theorem 3.1, this scalar curvature upper bound, together with the lower bound $R\ge C/f$ by Chow-Lu-Yang \cite{CLY}, implies that the Riemann curvature tensor $Rm$ must decay quadratically from above and below.  
This in particular enabled them  to conclude that such a Ricci shrinker must in fact be smoothly asymptotic to a cone at infinity.

Recall that an $n$-dimensional cone over a closed $(n-1)$-dimensional Riemannian manifold $(\Sigma^{n-1}, g_{\Sigma })$  is given by $[0,\infty )\times \Sigma $ endowed with Riemannian metric $%
g_{c}=dr^{2}+r^{2}\,g_{\Sigma }$. Denote by $E_{\rho}=(\rho,\infty )\times
\Sigma $ for $\rho\geq 0$ and define the dilation by $\lambda $ to be the map $%
\sigma _{\lambda }:E_{0}\rightarrow E_{0}$ given by $\sigma _{\lambda }(r,\theta)=(\lambda \,r,\theta ).$
Then a Riemannian manifold $(M^n, g)$ is said to be $C^{k}$ asymptotic to a cone $%
(E_{0},g_{c})$ if, for some $\rho>0,$ there is a diffeomorphism $\Phi: E_{\rho}\rightarrow M\setminus \Omega $ such that $\lambda ^{-2}\,\sigma_{\lambda }^{\ast }\,\Phi ^{\ast }\,g\rightarrow g_{c}$ as $\lambda\rightarrow \infty $ in $C_{loc}^{k}(E_{0},g_{c}),$ where $\Omega $ is a compact subset of $M.$

Now we can state precisely their conical structure result. 

\begin{theorem} {\bf (Munteanu-Wang \cite{MW})}  Let $\left( M^4, g, f\right) $ be a 4-dimensional complete noncompact gradient shrinking  Ricci soliton with scalar curvature
going to zero at infinity. Then there exists a cone $E_0$ such that $%
(M^4, g)$ is $C^k$ asymptotic to $E_0$ for all $k\ge 1.$
\end{theorem}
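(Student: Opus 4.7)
The plan is to extract the conical structure from Theorem 3.1 in three phases: first, upgrade the hypothesis $R\to 0$ to two-sided quadratic decay of $R$, and hence of all covariant derivatives of $Rm$; second, parameterize the end by the flow of a normalized soliton vector field; third, read off the cone from the $C^k$ limit of the rescaled metric on level sets of $f$.

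For the quadratic upper bound on $R$, I would combine the sharp Ricci bound $|Rc|^2\le CR^2$ from Step~7 of Theorem~3.1 with the identity $\Delta_{f}R=R-2|Rc|^2$: since $R\to 0$, this gives $\Delta_{f}R\ge R(1-2CR)\ge \tfrac12 R$ on an end $\{f\ge t_0\}$. Together with $\Delta_{f}f = 2-f$ and $\nabla R\cdot\nabla f = 2\,Rc(\nabla f,\nabla f)$, a direct computation of $\Delta_{f}(Rf)$ followed by a cutoff maximum principle (with the same $\psi(f)$ used in Steps~5--9) should yield $R\le c/f$. Combined with the Chow--Lu--Yang lower bound $R\ge C/f$ and Cao--Zhou's growth $f\sim r^2/4$, this produces $R\asymp r^{-2}$, and Theorem~3.1 immediately promotes this to $|Rm|,\,|\nabla Rm|\le C r^{-2}$. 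Iterating Lemma~2.4 to higher orders gives inequalities of the form $\Delta_{f}|\nabla^{k}Rm|\ge (k+\tfrac12)|\nabla^{k}Rm|-c_{k}|Rm|\,|\nabla^{k}Rm|$, and rerunning the Munteanu--Wang scheme yields $|\nabla^{k}Rm|=O(r^{-2-k})$; alternatively Shi-type local derivative estimates on balls of radius $\sim r$ give the same conclusion from the already established $|Rm|\le Cr^{-2}$.

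For the flow construction, let $X:=\nabla f/|\nabla f|^2$ on $\{f\ge t_0\}$. Since $|\nabla f|^2=f-R>0$ at infinity, the flow $\phi_{s}$ of $X$ is forward-complete and satisfies $f\circ\phi_{s}=f+s$, so it carries $\Sigma_{t_0}:=\{f=t_0\}$ diffeomorphically onto $\Sigma_{t_0+s}$. Setting $\rho:=2\sqrt{f}$, define $\Phi(\rho,\theta):=\phi_{(\rho^2-\rho_0^2)/4}(\theta)$ from $E_{\rho_0}=[\rho_0,\infty)\times\Sigma_{t_0}$ onto the end. Using the soliton equation $\nabla^2 f=\tfrac12 g-Rc$, the pullback splits as $\Phi^{*}g=d\rho^{2}+h(\rho,\theta)$, where the induced metric $h(\rho,\cdot)$ on $\Sigma_{\rho}$ satisfies $\partial_{\rho}(h/\rho^2)=O(\rho^{-3})$; the higher-order bounds $|\nabla^{k}Rm|=O(r^{-2-k})$ ensure that $h(\rho,\cdot)/\rho^2$ is Cauchy in $C^{k}$ on $\Sigma_{t_0}$ and converges to a smooth limit metric $g_{\Sigma}$. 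Unwinding the definition of the dilation $\sigma_{\lambda}$, this is exactly the statement $\lambda^{-2}\sigma_{\lambda}^{*}\Phi^{*}g\to d\rho^{2}+\rho^{2}g_{\Sigma}=g_{c}$ in $C^{k}_{\mathrm{loc}}(E_{0},g_{c})$.

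The main obstacle is the quadratic upper bound $R\le c/f$: the sharp bound $|Rc|^2\le CR^2$ is essential but not obviously sufficient, since the sign of the cross term $\nabla R\cdot\nabla f=2\,Rc(\nabla f,\nabla f)$ is not controlled, so the cutoff maximum principle must be arranged carefully so that the smallness of $R$ at infinity absorbs the bad terms. A secondary technical point is propagating the decay to all covariant derivatives $\nabla^{k}Rm$, which $C^{k}$ convergence of the cross-sectional metrics demands; this requires either iterating the Munteanu--Wang scheme to higher order or invoking Shi-type local derivative estimates adapted to the soliton setting.
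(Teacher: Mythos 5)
Your three-phase outline coincides with the strategy this survey attributes to Munteanu--Wang: upgrade $R\to 0$ to the sharp quadratic decay $R\le c/f$, combine with Theorem 3.1 and the Chow--Lu--Yang lower bound $R\ge C/f$ to get two-sided quadratic decay of $Rm$ (and, via Shi-type local estimates, of all $\nabla^k Rm$), then extract the cone from the level-set flow of $\nabla f/|\nabla f|^2$. Your second and third phases are essentially correct and are the standard route (modulo the minor point that $|\nabla(2\sqrt f)|^2=1-R/f\neq 1$, so the radial coordinate needs the usual $O(f^{-2})$ correction).

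The genuine gap is exactly where you locate it, and it is not something a ``carefully arranged'' cutoff will absorb: your mechanism for $R\le c/f$ does not close. Computing directly, $\Delta_f(Rf)=f\Delta_fR+R\Delta_ff+2\nabla R\cdot\nabla f=2R-2f|Rc|^2+4\,Rc(\nabla f,\nabla f)$, so the favorable linear terms cancel exactly ($fR-Rf=0$), while the cross term is only controlled by $|4\,Rc(\nabla f,\nabla f)|\le 4|Rc|\,|\nabla f|^2\le 4C\,(Rf)$ using Step 7, with $C$ an $O(1)$ constant that does not become small as $R\to0$. The best available inequality is therefore $\Delta_f(Rf)\ge -C\,(Rf)$, a linear inequality with the wrong sign; as the survey itself emphasizes (Conclusion 3), the $\psi(f)$ cutoff device of Steps 5 and 9 relies crucially on the positive sign in $\Delta_f w\ge w$ and gives nothing for $\Delta_f w\ge -Cw$. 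The soft comparison argument (using $\Delta_fR\ge(1-\epsilon)R$ against $\Delta_f(f^{-a})\le a f^{-a}$) does yield $R\le C_a f^{-a}$, but only for every $a<1$, and the borderline exponent $a=1$ is unreachable this way; yet it is indispensable, since with only $|Rm|\le C_a r^{-2a}$, $a<1$, the rescaled curvatures $\lambda^{2}|Rm|\circ\sigma_\lambda$ are unbounded and no cone limit can exist. Munteanu--Wang's proof of the sharp bound $R\le c/f$ requires an additional idea beyond the estimates recorded in Theorem 3.1 (this survey simply cites their paper for it), so as written your proposal leaves the crux of the theorem unestablished.
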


Kotschwar and L. Wang \cite{KW15} proved that if two complete noncompact gradient shrinking  Ricci solitons are $C^2$ asymptotic to the same cone then they must be isometric. Together with Theorem 3.2, this implies that the classification problem for 4-dimensional complete noncompact gradient shrinking Ricci solitons with scalar curvature going to zero at infinity is reduced to the one for limiting cones.

\section{Curvature estimates for 4D gradient expanding Ricci solitons}

Gradient expanding Ricci solitons may arise as Type III singularity models in the Ricci flow (under suitable positive curvature assumptions)  \cite{Cao97, ChenZhu}, and over which the matrix Li-Yau-Hamilton inequality (also known as the matrix  differential Harnack inequality) becomes equality \cite{Ha93, Cao92}. 
Indeed, Schulze-Simon \cite {SS13}  obtained gradient expanding solitons out of the asymptotic cones at infinity of solutions to the Ricci flow on complete noncompact Riemannian manifolds with bounded and non-negative curvature operator and positive asymptotic volume ratio; see also more recent work \cite{Der16, CD20}. 

By scaling, we may assume $\lambda=-1/2$, so that the gradient expanding Ricci soliton equation is of the form 
$R_{ij}+\nabla_i\nabla_jf=-\frac 1 2  g_{ij},$  or equivalently, 
\begin{equation}
 R_{ij} +\frac 1 2 g_{ij}=\nabla_i\nabla_j F, \quad \text{with} \quad F = -f+\frac n 2.
\end{equation} 
It is well-known that compact expanding solitons are necessarily Einstein.  Hence the study of expanding solitons has been focused on complete noncompact ones.  Here we are concerned with $4$-dimensional complete noncompact gradient expanding solitons with nonnegative Ricci curvature $Rc\ge 0$.

The simplest example of a complete (noncompact) gradient expanding  soliton is the {\it Gaussian expander }on $\RR^n$, with the standard flat metric and potential function $f(x)=-|x|^2/4$. In addition, Bryant \cite{Bryant} and the author \cite{Cao97} constructed non-flat rotationally symmetric expanding gradient Ricci and K\"ahler-Ricci solitons, including one-parameter families of expanding solitons with positive sectional curvature that are asymptotic to a cone at infinity, on $\RR^n$ and $\mathbb {C}^n$ respectively. Moreover,  the constructions in \cite{Cao94, Cao97} have been extended by Feldman-Ilmanen-Knopf \cite{FIK} to a construction of gradient  expanding K\"ahler-Ricci solitons on the complex line bundles $O(-k)$ ($k>n$) over the complex projective space ${\mathbb C}P^n$ ($n\geq 1$), and further generalized by Dancer-Wang  \cite{DW11}. 
For additional examples and other constructions, see, e.g., \cite{PTV00, Lauret01, GK04, BL07, DW09, FW11, BDGW15, AK19, Win19b} and the references therein. 
 
For curvature estimates, P.-Y. Chan \cite{Chan2} recently proved that if $(M^4, g, f)$ is a complete noncompact gradient expanding Ricci soliton with bounded scalar curvature $|R|\le R_0$ and proper potential function $f$ (i.e., $\lim_{r(x)\to \infty} F(x) =\infty$), then the curvature tensor $Rm$ is bounded. Note that $Rc\ge 0$ implies $0\le R\le R_0$, for some $R_0>0$, and $\lim_{r(x)\to \infty} F(x) =\infty$ (see Propositions 4.1-4.2 below).  It follows that $4$-dimensional complete gradient expanding solitons with nonnegative Ricci curvature $Rc\geq 0$  must have bounded Riemann curvature tensor $|Rm|\le C$.  
Motivated by the curvature estimates of Munteanu-Wang \cite{MW} for 4-dimensional gradient shrinking Ricci solitons as presented in Section 3, as well as the corresponding curvature estimates of Cao-Cui \cite{CaoCui} and Chan 
\cite{Chan1} for 4-dimensional gradient steady solitons (see also Section 5), it is natural to ask if one could also control the Riemann curvature tensor $Rm$ in terms of the scalar curvature $R$ for $4$-dimensional complete gradient expanding solitons with nonnegative Ricci curvature.

We remark that, as we have seen from the proof sketch of Theorem 3.1 in Section 3,  one of the important facts used repeatedly in the work of Munteanu and Wang \cite{MW} is the scalar curvature lower bound $R\ge C/ f$ for $n$-dimensional non-flat complete noncompact gradient shrinking soliton due to Chow-Lu-Yang \cite{CLY}.  Note that by the optimal asymptotic growth estimate of Cao-Zhou \cite{CaoZhou} on the potential function $f$ (Proposition 3.1), this is equivalent to the scalar curvature $R$ has at most quadratic decay at infinity. Another key feature, which resulted in the less standard maximum principle arguments in  Step 5 and Step 9 in the proof of Theorem 3.1, is  the differential inequality of the form $\Delta_{f} w\ge w$,  or  $\Delta_{f-2\ln R} (w) \ge w$, that is due to the curvature differential inequalities 
\[\Delta_{f} |Rm| \ge  |Rm|-c|Rm|^2  \quad \text{and} \quad \Delta_{f} |\na Rm| \ge  \frac 32 |\na Rm|-c|Rm||\na Rm| \]
satisfied by gradient shrinking solitons. In contrast, for gradient expanding solitons, we have instead
\[\Delta_{f} |Rm| \ge  -|Rm|-c|Rm|^2  \quad \text{and} \quad \Delta_{f} |\na Rm| \ge  -\frac 32 |\na Rm|-c|Rm||\na Rm|, \]
so the corresponding differential inequality we have to deal with is $\Delta_{f} w\ge -w$, or $\Delta_{f-2\ln R} (w) \ge -w$,  if we try to adopt a similar argument to get estimates for $w$. Unfortunately, this does not seem to work in the  expanding case. 

Despite the above mentioned differences with gradient shrinking solitons,  as it turns out, it is possible to obtain curvature estimates on $Rm $ and $\na Rm$ in terms of the scalar curvature $R$ for 4-dimensional gradient expanding solitons with nonnegative Ricci curvature, as shown in our very recent work with T. Liu \cite{CaoLiu}.

\begin{theorem} {\bf (Cao-Liu \cite{CaoLiu})}   Let $(M^4, g, f)$ be a $4$-dimensional complete noncompact 
gradient expanding Ricci soliton with nonnegative Ricci curvature $Rc\ge 0$. Then, there exists a constant $C>0$  such that, for any $0\leq a<1$, the following estimates hold: 
\begin{equation}
 |Rm|  \le \frac {C} {1-a} R^a \quad {\mbox{and}} \quad |\nabla Rm|\le \frac {C} {1-a} R^a \quad on \ M^4.  
\end{equation}
Moreover,  if in addition the scalar curvature $R$ has at most polynomial decay then 
\begin{equation}
 {|Rm|} \le C R  \quad on \ M^4. 
\end{equation}
\end{theorem}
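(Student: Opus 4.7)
The plan is to adapt the nine-step framework of Munteanu--Wang (sketched in Section 3) to expanders, navigating two essential differences. First, the sign of $\lambda$ is reversed throughout Lemmas 2.2--2.4, which will actually work in our favor in one key identity. Second, and more seriously, expanders admit no universal scalar curvature lower bound of Chow--Lu--Yang type $R \geq C/f$, which is why the polynomial decay hypothesis must be invoked for the sharp estimate in the second half of the theorem.

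I would begin by observing that much of Steps 1--2 is free here: under $Rc \geq 0$ we have $|Rc| \leq R$ automatically, and by Chan \cite{Chan2} together with $Rc \geq 0$ one already has $0 \leq R \leq R_{0}$ and $|Rm| \leq C_{0}$ on $M^{4}$. Propositions 4.1--4.2 furnish properness of $F := -f + n/2$ and the bound $|\nabla f|^{2} \geq \tfrac{1}{2}F$ outside a compact set, so Lemma 2.5 sharpens to $|Rm| \leq A_{0}(R + |\nabla Rc|/\sqrt{F})$ at infinity. Hence the first claim reduces to establishing $|\nabla Rm| \leq \tfrac{C}{1-a} R^{a}$ for each $a < 1$. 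I would attack this through the auxiliary function $u := |\nabla Rm|^{2} R^{-2a}$. Applying (2.5) with $\lambda = -1/2$ yields
\[\Delta_{f}(R^{-2a}) \;=\; 2aR^{-2a}\bigl(1 + 2R^{-1}|Rc|^{2} + (2a+1)|\nabla \ln R|^{2}\bigr),\]
in which \emph{every} term on the right is positive; this is precisely what saves the argument, because although the $\lambda$-terms in $\Delta_{f}|\nabla Rm|^{2}$ have the ``wrong'' sign for expanders, the positive contribution $-2\lambda = 1$ supplies the good quadratic term $u^{2}R^{a-1}$ in $\Delta_{f} u$ after absorbing the cross term $2\nabla R^{-2a} \cdot \nabla|\nabla Rm|^{2}$ by Cauchy--Schwarz (at the cost of a factor $(1-a)^{-1}$). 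Using the bound $|Rm| \leq C_{0}$ to control the cubic curvature terms, one arrives at a differential inequality of the schematic form
\[\Delta_{f} u \;\geq\; \tfrac{1-a}{C}\, u^{2} R^{a-1} \;-\; \tfrac{C}{1-a}\, u \;-\; \tfrac{C}{1-a}\]
outside a compact set. A cut-off $\varphi(F)$ supported in $\{F \in [\rho/2, 3\rho]\}$ and the standard maximum-principle argument applied to $G := \varphi^{2} u$ then give $u \leq C/(1-a)^{2}$ uniformly, from which the first claim follows via Lemma 2.5.

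The main obstacle is upgrading to the sharp estimate $|Rm| \leq CR$ under polynomial decay. In the shrinker proof (Steps 3--9) this was achieved by pairing $R \geq C/f$ with the choice $a = 1 - C/\rho$ to control $R^{a-1}$ on the annulus $\{f \sim \rho\}$; since expanders admit no such universal lower bound on $R$, the first estimate genuinely must blow up as $a \to 1$. Under a polynomial decay bound $R \geq c(F+1)^{-N}$, however, a $\rho$-dependent choice $a(\rho) = 1 - \alpha/\log\rho$ on $\{F \sim \rho\}$ balances $R^{a-1} = O(1)$ against $1/(1-a) = O(\log\rho)$, and rerunning the maximum-principle argument on $G := \varphi^{2} |\nabla Rm|^{2} R^{-2a(\rho)}$ and sending $\rho \to \infty$ removes the divergent factor, yielding $|\nabla Rm| \leq CR$ and hence $|Rm| \leq CR$ via Lemma 2.5. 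The delicate bookkeeping in this last step---matching the $\rho$-dependent exponent against the cut-off and absorbing all error terms---is the most technical part of the argument, and reflects the genuine geometric distinction between expanders and shrinkers.
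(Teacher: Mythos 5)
There is a genuine gap, and it sits at the heart of your argument. You propose to run the maximum principle on $u:=|\na Rm|^2R^{-2a}$ and assert that the identity (2.5) ``supplies the good quadratic term $u^2R^{a-1}$.'' It does not. Expanding $\Delta_f u$ gives $R^{-2a}\Delta_f|\na Rm|^2+|\na Rm|^2\Delta_f(R^{-2a})+2\na|\na Rm|^2\cdot\na(R^{-2a})$; the only superlinear candidates come from the factor $2R^{-1}|Rc|^2$ inside $\Delta_f(R^{-2a})$, and the resulting term is $4a\,u\,R^{-1}|Rc|^2$. This is quadratic in $u$ only when the numerator of $u$ is itself $|Rc|^2$ (which is exactly why Munteanu--Wang's Step 1 works for $|Rc|^2/R^a$); for $|\na Rm|^2R^{-2a}$ it is merely linear in $u$ with a bounded coefficient, since $|Rc|\le R\le R_0$. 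The constant $-2\lambda=1$ likewise only contributes $+2au$. With no quadratic term, the inequality $\Delta_f u\ge \frac{1-a}{C}u^2R^{a-1}-\cdots$ is false as stated and the cut-off argument yields nothing. The paper's actual mechanism is different and is the one you need: pair the quantities, taking $v=|Rm|/R^a+|Rc|^2/R^{2a}$ (and later $w=|\na Rm|/R^a+|Rm|^2/R^{2a}$), and work with $\Delta_{f-2a\ln R}$. The Bochner term $2(1-a)|\na Rc|^2/R^{2a}$ produced by the second component, combined with the quantitative form of Lemma 2.5 (Lemma 4.1: $|\na Rc|^2\ge \frac{\Lambda}{2A_0^2}|Rm|^2-\Lambda|Rc|^2$ outside a compact set), dominates $(|Rm|/R^a)^2$ and hence $v^2$; this is the sole source of the quadratic term. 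Note also that the logical order is the reverse of yours: the $|Rm|\le\frac{C}{1-a}R^a$ bound comes first (Step B, via $\na Rc$, not $\na Rm$) and is then an input to the $|\na Rm|$ bound (Step C).

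Your treatment of the polynomial-decay upgrade also does not close as written. With $a(\rho)=1-\alpha/\log\rho$ the coefficient of the quadratic term in the differential inequality is $(1-a)\sim 1/\log\rho$, so the maximum principle only yields $G\le C\log\rho$ on $\{F\sim\rho\}$, and the factor $\log\rho$ does not cancel when you convert back to $|Rm|\le CR$. The paper's route is much shorter and avoids any $\rho$-dependent exponent: if $R\ge C/r^d$, then since $|\na f|=O(r)$ one has $|\na f|^{-1}\le CR^{1/d}$, and applying Lemma 2.5 pointwise with the already-proved bound $|\na Rc|\le c|\na Rm|\le C_dR^{1-1/d}$ (the first part of the theorem with the fixed exponent $a=1-1/d$) gives $|Rm|\le A_0\bigl(|Rc|+|\na Rc|/|\na f|\bigr)\le A_0\bigl(R+C_dR^{1-1/d}R^{1/d}\bigr)\le CR$. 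You should restructure the proof around the paired quantities and Lemma 4.1, and replace the $\rho$-dependent balancing by this direct application of Lemma 2.5.
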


By using Theorem 4.1, we also obtain a very useful result about asymptotic curvature behavior for $4$-dimensional gradient expanding Ricci solitons with $Rc\geq 0$. 

\begin{theorem} {\bf (Cao-Liu \cite{CaoLiu})}  Let $(M^4, g, f)$ be a  $4$-dimensional complete noncompact gradient expanding Ricci soliton with nonnegative Ricci curvature $Rc\ge 0$. Assume it has finite asymptotic scalar curvature ratio 
\begin{equation}
 \limsup_{r\to \infty} R  r^2< \infty, 
\end{equation}
where $r=r(x)$ is the distance function to a fixed base point in $M$. 
Then $(M^4, g, f)$ has finite {\em asymptotic curvature ratio }
\begin{equation}
A := \limsup_{r\to \infty} |Rm| r^2< \infty.  
\end{equation}
\end{theorem}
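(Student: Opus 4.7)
The plan is to obtain Theorem 4.2 as a direct corollary of the sharp curvature estimate in Theorem 4.1, once one observes that finite asymptotic scalar curvature ratio is a strong form of the polynomial decay hypothesis required there. No new maximum principle arguments are needed; the entire content is packaged in Theorem 4.1 together with the quadratic decay built into the assumption.

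First I would unpack the hypothesis $\limsup_{r(x)\to\infty} R\,r^2 <\infty$. Fixing a base point $x_0\in M^4$, this produces constants $r_0, C_1>0$ such that
\[
R(x)\le \frac{C_1}{r(x)^2} \qquad\text{for all }x\text{ with }r(x)\ge r_0.
\]
Note that by the standing $Rc\ge 0$ assumption (together with Propositions 4.1--4.2 invoked before Theorem 4.1), $R$ is bounded on $M^4$, so the above inequality extends, after adjusting $C_1$, to a uniform bound $R(x)\le C_1(1+r(x))^{-2}$. In particular, $R$ has \emph{at most polynomial decay} in the sense required by the second part of Theorem 4.1, since quadratic decay is a (and in fact the strongest) polynomial decay rate.

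Next I would apply the sharp statement of Theorem 4.1 directly: under the at-most-polynomial-decay assumption on $R$, there exists a constant $C>0$ such that
\[
|Rm|\le C\,R \qquad\text{on }M^4.
\]
Combining this pointwise inequality with the quadratic decay bound for $R$ above gives, for every $x$ with $r(x)\ge r_0$,
\[
|Rm|(x)\,r(x)^2 \;\le\; C\,R(x)\,r(x)^2 \;\le\; C\,C_1,
\]
so $A=\limsup_{r\to\infty} |Rm|\,r^2 \le C\,C_1<\infty$, as required.

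The only point that might look like an obstacle is checking that the ``polynomial decay of $R$'' hypothesis of Theorem 4.1 is actually implied by the ``finite asymptotic scalar curvature ratio'' hypothesis; but this reduction is immediate since $R\le C/r^2$ is itself polynomial decay. Thus the proof of Theorem 4.2 is essentially a one-line application of Theorem 4.1, and all the real work (cut-off functions, weighted Laplacian maximum principle, the various intermediate $|Rc|$, $|Rm|$, and $|\nabla Rm|$ estimates) is hidden inside the proof of Theorem 4.1.
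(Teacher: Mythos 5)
There is a genuine gap, and it sits at the very center of your reduction: you have misread the hypothesis of the second part of Theorem 4.1. In this paper, ``$R$ has at most polynomial decay'' means a \emph{lower} bound $R\ge C/r^d$ outside a compact set (see Step D of the sketch, which begins ``Assume that the scalar curvature satisfies $R\ge C/r^d(x)$ for some $d\ge 1$''; compare also Proposition 3.3, where ``$R$ has at most quadratic decay'' refers to $R\ge C/f$). It is a statement that $R$ does not decay \emph{faster} than polynomially. Your hypothesis $\limsup R\,r^2<\infty$ gives only the \emph{upper} bound $R\le C_1/r^2$, which is the opposite inequality and does not put you in the scope of the estimate $|Rm|\le CR$. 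Indeed, whether a $4$-dimensional expander with $Rc\ge0$ automatically satisfies a polynomial lower bound $R\ge C/F^d$ is precisely the open Question~1 raised in Section~6 of this paper, so your reduction cannot be ``immediate.''

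The paper's actual proof avoids this entirely: it uses only the first part of Theorem 4.1 (valid with no decay assumption), taking $a=1/2$ to get $|\nabla Rc|\le c|\nabla Rm|\le CR^{1/2}\le C_2/r$, combines this with $|Rc|\le R\le C_1/r^2$ and the linear growth $|\nabla f|^2=-f-R=O(r^2)$ (with a matching lower bound from Proposition 4.1), and then plugs all three into the pointwise Munteanu--Wang inequality of Lemma 2.5,
\[
|Rm|\le A_0\left(|Rc|+\frac{|\nabla Rc|}{|\nabla f|}\right)\le A_0\left(\frac{C_1}{r^2}+\frac{C_2/r}{cr}\right)\le \frac{C}{r^2}.
\]
To repair your argument you would need to replace the appeal to the sharp estimate $|Rm|\le CR$ by this direct use of Lemma 2.5 together with the $a=1/2$ case of the $|\nabla Rm|$ estimate.
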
 

\begin{remark} If we only assume $Rc\geq 0$ outside some compact set $K\subset M$, then the conclusions in Theorem 4.1 and Theorem 4.2 remain true over  $M\setminus K$. 
\end{remark}

As an application of Theorem 4.2, by combining with a result of Chen and Deruelle \cite{ChenDer} (see Theorem 1.2 of  \cite{ChenDer}), we get 

\begin{corollary} {\bf (Cao-Liu \cite{CaoLiu})}  Let $(M^4, g, f)$ be a $4$-dimensional complete noncompact non-flat 
gradient expanding Ricci soliton with nonnegative Ricci curvature $Rc\ge 0$ and finite asymptotic scalar curvature ratio.  
Then $(M^4, g, f)$ has a $C^{1, \alpha}$ asymptotic cone structure at infinity, for any $\alpha \in (0, 1)$. 
\end{corollary}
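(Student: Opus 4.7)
The plan is to derive the corollary as an immediate concatenation of Theorem 4.2 with the result of Chen and Deruelle \cite{ChenDer}. First I would invoke Theorem 4.2 on $(M^4, g, f)$: since the soliton is complete noncompact with $Rc\ge 0$ and, by hypothesis, has finite asymptotic scalar curvature ratio $\limsup_{r\to\infty} R\,r^2 < \infty$, Theorem 4.2 directly yields the finite asymptotic curvature ratio
\[ A = \limsup_{r\to \infty} |Rm|\, r^2 < \infty. \]
This is the crucial upgrade from scalar curvature quadratic decay to Riemann curvature quadratic decay.

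With quadratic decay of the full curvature tensor in hand, I would then appeal to Theorem 1.2 of Chen and Deruelle \cite{ChenDer}, which asserts that any non-flat $4$-dimensional complete noncompact gradient expanding Ricci soliton with $Rc\ge 0$ and finite asymptotic curvature ratio admits a $C^{1,\alpha}$ asymptotic cone structure at infinity, for every $\alpha \in (0,1)$. All hypotheses of that theorem are now in place: non-flatness and $Rc\ge 0$ are assumed, while $A<\infty$ has just been established via Theorem 4.2. The stated conclusion then follows immediately.

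The only non-formal step in this reduction is checking that the conclusion of Theorem 4.2 lines up with the hypothesis of \cite{ChenDer}, which is essentially tautological here. Hence there is no real obstacle in the proof of the corollary itself; the entire substantive content is packaged inside Theorem 4.2, whose proof uses Theorem 4.1 together with the Munteanu--Wang key estimate (Lemma 2.5) to convert the scalar curvature decay hypothesis into quadratic decay of $|Rm|$. If I had to identify a potential subtlety, it would be ensuring that the version of the Chen--Deruelle statement one cites genuinely allows $Rc \ge 0$ (rather than strict positivity) and that the base point in the definition of asymptotic curvature ratio is compatible with the one used in \cite{ChenDer}; both are routine to verify from the definitions.
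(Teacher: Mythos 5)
Your proposal matches the paper exactly: the corollary is stated there as an immediate consequence of Theorem 4.2 (which upgrades finite asymptotic scalar curvature ratio to finite asymptotic curvature ratio $A<\infty$) combined with Theorem 1.2 of Chen--Deruelle \cite{ChenDer}. No gaps; this is the same two-step concatenation the paper intends.
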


\subsection {Background material for expanding solitons with $Rc\ge 0$} 

According to Munteanu-Wang \cite{MW12},  the potential function of a general complete gradient expanding Ricci soliton $(M^n, g, f)$ satisfying (4.1),  with respect to any reference point $p\in M$, satisfies the following estimates
\begin{equation}
\frac 1 4r^2-Cr^{3/2} \sqrt{\ln r} \le \sup_{\partial B_p(r)} F(x) \le \frac 1 4 r^2+Cr, 
\end{equation}
for some constant $C>0$. 

On the other hand, if $(M^n, g, f)$ has nonnegative Ricci curvature $Rc\ge 0$ then we have the following well-known fact about the asymptotic behavior of the potential function (see, e.g., Lemma 5.5 in \cite{Cao et al} or Lemma 2.2 in \cite{ChenDer}).

\begin{proposition}  Let $(M^n, g , f)$ be a complete noncompact gradient expanding Ricci soliton satisfying (4.1) and with nonnegative Ricci curvature $Rc \ge 0$. Then, there exists some constant $c_1 >0$ such that, outside some compact subset of $M^n$, the potential function $F=-f+n/2$ satisfies the estimates
\begin{equation}
\frac 1 4(r(x)-c_1)^2\le F(x)\le \frac 1 4 (r(x)+2\sqrt{F(x_0)})^2,  
\end{equation}
where $r(x)$ is the distance function from some fixed base point in $M^n$. In particular, $F$ is a strictly convex exhaustion function achieving its minimum at its unique interior point $x_0$, which we shall take as the base point, and the underlying manifold $M^n$ is diffeomorphic to ${\mathbb R}^n$.
 \end{proposition}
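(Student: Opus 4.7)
The plan is to extract two pointwise identities from the soliton equation (4.1) and the normalized first integral (2.4), then use them to show that $F$ is a strictly convex proper exhaustion with a unique critical point, after which the quadratic bounds follow by elementary ODE comparison along minimizing geodesics. Rewriting (4.1) gives $\nabla^2 F = Rc + \tfrac12 g$, and from (2.4) with $\lambda = -1/2$ together with $F = -f + n/2$ one obtains $|\nabla F|^2 = F - R - n/2$. The hypothesis $Rc \ge 0$ then yields the two workhorse bounds
\[ \nabla^2 F \ge \tfrac{1}{2} g \qquad \text{and} \qquad |\nabla F|^2 \le F - \tfrac{n}{2}, \]
and in particular $F \ge n/2 > 0$ throughout $M^n$.

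Next I would show that $F$ is a strictly convex proper function. For any geodesic $\gamma(t)$, the function $\phi(t) := F(\gamma(t))$ satisfies $\phi''(t) = \nabla^2 F(\dot\gamma, \dot\gamma) \ge 1/2$, hence $\phi$ is strictly convex with at-least-quadratic growth; completeness of $g$ then forces $F \to \infty$ along every geodesic, so $F$ is proper, attains a global minimum at some $x_0 \in M^n$, and strict convexity makes $x_0$ the only critical point. Choosing $x_0$ as the base point so $r(x) = d(x_0, x)$, I would establish both estimates by ODE comparison along a unit-speed minimizing geodesic $\gamma:[0, r(x)] \to M$ from $x_0$ to $x$. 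With $\phi(0) = F(x_0)$ and $\phi'(0) = 0$, integrating $\phi'' \ge 1/2$ twice yields $F(x) \ge F(x_0) + \tfrac14 r(x)^2$, which (using $F(x_0) \ge n/2 > 0$) implies the stated lower bound outside a suitable compact set with $c_1$ absorbing the constant. For the upper bound, the inequality $|\nabla F| \le \sqrt{F}$ rewrites as $(\sqrt{\phi})' \le 1/2$ along $\gamma$, giving $\sqrt{F(x)} \le \sqrt{F(x_0)} + \tfrac12 r(x)$, and squaring produces the claimed inequality.

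For the final assertion, $F$ is a smooth proper function with a single non-degenerate critical point (a minimum, since $\nabla^2 F \ge \tfrac12 g$), so Morse theory (or Reeb's theorem) identifies $M^n$ with $\mathbb{R}^n$. No individual step is especially delicate; the main technical point is verifying properness of $F$ from completeness and $\nabla^2 F \ge \tfrac12 g$, since this simultaneously underwrites the existence of $x_0$ and the topological conclusion, while the quadratic bounds themselves reduce to one-variable ODE comparison that is driven entirely by the two identities above.
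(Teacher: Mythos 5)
Your proof is correct and follows the standard argument for this well-known fact (the paper itself gives no proof, citing Lemma 5.5 of \cite{Cao et al} and Lemma 2.2 of \cite{ChenDer}, whose proofs run exactly along your lines): $\nabla^2 F = Rc + \tfrac12 g \ge \tfrac12 g$ gives strict convexity, properness, a unique critical point, and the lower bound, while $|\nabla F|^2 = F - R - \tfrac n2 \le F$ integrates to the upper bound, and Morse theory gives $M^n \cong \mathbb{R}^n$. The only point worth making explicit is that properness should be deduced from the uniform quadratic lower bound $F(x) \ge F(p) - |\nabla F(p)|\,d(p,x) + \tfrac14 d(p,x)^2$ obtained by integrating $\phi'' \ge \tfrac12$ from a fixed $p$ (not merely from divergence of $F$ along each individual geodesic), which is implicit in your sketch.
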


Another useful fact is the boundedness of the scalar curvature of a gradient expanding soliton with nonnegative Ricci curvature; see Ma-Chen \cite{MC} (and also \cite{DZ, Der17}). 

\begin{proposition} 
Let $(M^n, g, f)$ be a complete noncompact gradient expanding Ricci soliton with nonnegative Ricci curvature $Rc \ge 0$. Then its scalar curvature $R$ is bounded from above, i.e., 
\[R\le R_0 \quad \mbox{for some positive constant} \ R_0.\]
Moreover, $R>0$ everywhere unless  $(M^n, g, f)$ is the Gaussian expanding soliton. 
\end{proposition}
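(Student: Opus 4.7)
The plan is to prove the two assertions separately. The upper bound will come from a monotonicity property of $R$ along a suitable flow, while the rigidity statement will follow from the strong maximum principle together with a Hessian rigidity theorem.

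For the upper bound, the identity $\nabla_i R = 2R_{ij}\nabla_j f$ of Lemma 2.1 combined with $Rc\ge 0$ gives
\[\langle \nabla R,\nabla f\rangle \,=\, 2\,Rc(\nabla f,\nabla f) \,\ge\, 0.\]
Writing $F=-f+n/2$ as in (4.1), this means $R$ is non-increasing in the direction $\nabla F$. Let $\phi_t(p)$ denote the flow generated by $\nabla f = -\nabla F$ starting at $p\in M$. Then $R\circ\phi_t$ is non-decreasing while $F\circ\phi_t$ is non-increasing. By Proposition 4.1, $F$ is a proper strictly convex exhaustion with unique critical point $x_0$, so the sublevel set $\{F\le F(p)\}$ is compact; hence $\phi_t$ exists for all $t\ge 0$ and remains in this set. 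Moreover,
\[\int_0^\infty |\nabla F|^2(\phi_t)\,dt \,=\, F(p)-\lim_{t\to\infty} F(\phi_t) \,<\, \infty,\]
so some subsequence satisfies $\nabla F(\phi_{t_k})\to 0$, and since $\nabla F$ vanishes only at $x_0$, compactness yields $\phi_{t_k}\to x_0$. Monotonicity of $F$ along the flow, together with strict convexity of $F$, then promotes this to $\phi_t\to x_0$, giving
\[R(p) \,\le\, \lim_{t\to\infty} R(\phi_t(p)) \,=\, R(x_0).\]
We may therefore take $R_0:=R(x_0)$.

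For the rigidity statement, note first $R\ge 0$ since $Rc\ge 0$. Lemma 2.2 with $\lambda=-1/2$ reads $\Delta_f R = -R-2|Rc|^2 \le 0$, so $R$ is a nonnegative $f$-superharmonic function. If $R$ vanishes at some point, the strong minimum principle forces $R\equiv 0$ on the connected manifold $M$, and reinserting into the identity gives $|Rc|^2\equiv 0$, i.e.\ $(M,g)$ is Ricci-flat. The soliton equation (4.1) then reduces to $\nabla^2 F = \tfrac12 g$, and Tashiro's classical Hessian rigidity theorem implies that a complete Riemannian manifold admitting a smooth function whose Hessian is a nonzero constant multiple of the metric must be isometric to Euclidean space with $F$ a quadratic polynomial. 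This identifies $(M^n,g,f)$ with the Gaussian expanding soliton.

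The main technical point to verify is the convergence $\phi_t\to x_0$ in the first part; this follows cleanly from the strong convexity $\nabla^2 F\ge \tfrac12 g$ (immediate from $Rc\ge 0$ and (4.1)) together with compactness of sublevel sets, but it is the place where one must be careful about the fact that a priori only a subsequential limit is provided by the integral bound. The rigidity step, in contrast, is essentially immediate once one invokes Tashiro's theorem as a black box.
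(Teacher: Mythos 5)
Your proof is correct and is essentially the standard argument: the paper itself does not prove this proposition but cites Ma--Chen \cite{MC} (and, for the rigidity part, Remark 4.2 with \cite{PW}), and your two steps --- integrating $\langle\nabla R,\nabla f\rangle=2\,Rc(\nabla f,\nabla f)\ge 0$ along the gradient flow of $f$ toward the unique critical point $x_0$ supplied by Proposition 4.1, and then combining the strong minimum principle for $\Delta_f R=-R-2|Rc|^2\le 0$ with the Hessian rigidity $\nabla^2F=\tfrac12 g$ --- are exactly the arguments in those references. The one place you rightly flag, upgrading the subsequential limit $\phi_{t_k}\to x_0$ to full convergence, is handled correctly by the monotonicity of $F\circ\phi_t$ together with compactness of sublevel sets and uniqueness of the minimizer.
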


\begin{remark}
In \cite{MC}, the authors asserted that either $R>0$ or $(M^n, g)$ is Ricci flat.  However, a complete non-compact Ricci-flat gradient expanding Ricci soliton must be the Gaussian expanding soliton on ${\RR}^n$ (see, e.g., Proposition 3.1 in \cite{PW}). 
\end{remark}

Furthermore, by combining  (2.1) and (2.4), we obtain 
$$\Delta f -|\nabla f|^2=f-\frac n 2, \quad  {\mbox {i.e.}}, \quad   \Delta_f f=f-\frac n 2$$
where $\Delta_f =:\Delta -\nabla f\cdot \nabla$ is the weighted Laplace operator. 
Equivalently,  for $F=-f+n/2$, 
\begin{equation}
 |\na F|^2=F-R-\frac n 2 \qquad \mbox{and}  \qquad \Delta_f F =F. 
\end{equation}

Again, by Lemmas 2.2-2.4, we have the following differential identities and inequalities for any complete gradient expanding Ricci soliton satisfying Eq. (4.1).
\begin{eqnarray}
\Delta_{f} R &=&-R-2|Rc|^2, \notag \\
\Delta_{f} R_{ik} &=&-R_{ik} -2R_{ijkl}R_{jl},\notag \\
\Delta_{f} {Rm} &=&  -Rm+ Rm\ast Rm,\notag \\
\na_lR_{ijkl} &=& \na_jR_{ik}-\na_i R_{jk}=R_{ijkl}\na_lf, \notag\\
\Delta_{f} |Rc|^2 & \ge & 2|\na Rc|^2-2|Rc|^2-4|Rm| |Rc|^2, \label{id2}\\
\Delta_{f}|Rm|^2  &\ge & 2|\na Rm|^2 - 2|Rm|^2-c|Rm|^3, \notag\\
\Delta_{f} |Rm| &\ge & -|Rm|-c|Rm|^2, \notag \\
\Delta_{f} |\na Rm|^2 &\ge & 2|\na^2 Rm|^2  -3 |\na Rm|^2-c|Rm| |\na Rm|^2,  \notag\\
\Delta_{f} |\na Rm| &\ge & -\frac 32 |\na Rm|-c|Rm||\na Rm| . \notag
\end{eqnarray}
Here, $c>0$ is some universal constant depending only on the dimension $n$.

\subsection {Sketch of the proofs of Theorem 4.1 and Theorem 4.2} \ In this subsection, we shall provide a sketch of the proofs of Theorem 4.1 and Theorem 4.2, and refer the readers to our paper \cite{CaoLiu} for further details.  

We denote by
\begin{equation} 
D(t):= \{x\in M : F(x) \leq t \}. 
\end{equation}
As a consequence of Lemma 2.5, Proposition 4.1 and Proposition 4.2, one easily gets the following

\begin{lemma} 
Let $(M^4, g, f)$ be a $4$-dimensional complete noncompact gradient expanding Ricci soliton satisfying Eq. (4.1) and with nonnegative Ricci curvature.
Then, for any constant $\Lambda >1$, there exists some constant $r_0>0$ (depending on $\Lambda$) such that 
\[|\na Rc|^2 \ge \frac {\Lambda} {2A_0^2} |Rm|^2 - \Lambda |Rc|^2 \quad \mbox{outside}\  D(r_0).\] 
\end{lemma}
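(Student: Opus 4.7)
The plan is to derive this lemma directly from the Munteanu--Wang pointwise estimate (Lemma 2.5), combined with the soliton identity $|\nabla F|^2 = F - R - n/2$ from (4.8) and the scalar curvature upper bound from Proposition 4.2. The statement is really just a compactness-type consequence of these three inputs, so the whole argument should be a few lines.

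First, I would invoke Lemma 2.5 at points where $\nabla f \neq 0$, which gives $|Rm| \le A_0(|Rc| + |\nabla Rc|/|\nabla f|)$. Squaring and using the elementary inequality $(a+b)^2 \le 2(a^2+b^2)$ yields
\[
|Rm|^2 \le 2 A_0^2 |Rc|^2 + \frac{2A_0^2\, |\nabla Rc|^2}{|\nabla f|^2},
\]
which rearranges to the pointwise bound
\[
|\nabla Rc|^2 \ge |\nabla f|^2 \left( \frac{|Rm|^2}{2A_0^2} - |Rc|^2 \right).
\]

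Next, I would control $|\nabla f|^2$ from below away from a compact set. Since $F = -f + n/2$, we have $|\nabla f|^2 = |\nabla F|^2 = F - R - n/2$ by (4.8). Using $R \le R_0$ from Proposition 4.2, this gives $|\nabla f|^2 \ge F - R_0 - n/2$. On $M \setminus D(r_0)$ we have $F > r_0$, so $|\nabla f|^2 > r_0 - R_0 - n/2$; choosing $r_0 > \Lambda + R_0 + n/2$ therefore forces $|\nabla f|^2 \ge \Lambda$ outside $D(r_0)$. In particular $\nabla f \neq 0$ there, so the use of Lemma 2.5 above is justified.

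Finally, I would conclude by a trivial case split. If $|Rm|^2/(2A_0^2) - |Rc|^2 \ge 0$ at a point of $M \setminus D(r_0)$, the factor on the right of the rearranged inequality is non-negative, and multiplying by $|\nabla f|^2 \ge \Lambda$ preserves the direction to yield $|\nabla Rc|^2 \ge (\Lambda/(2A_0^2))|Rm|^2 - \Lambda |Rc|^2$. If instead $|Rm|^2/(2A_0^2) - |Rc|^2 < 0$, then $(\Lambda/(2A_0^2))|Rm|^2 - \Lambda |Rc|^2 < 0 \le |\nabla Rc|^2$, so the inequality is trivial. There is no serious obstacle here; the only mild subtlety is the sign-indefiniteness of $|Rm|^2/(2A_0^2) - |Rc|^2$ which forces this short case split.
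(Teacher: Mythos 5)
Your proposal is correct and follows essentially the same route the paper intends: the paper derives this lemma as an immediate consequence of Lemma 2.5 together with Propositions 4.1--4.2, i.e., square the Munteanu--Wang bound, use $|\nabla f|^2 = F - R - \tfrac{n}{2} \ge F - R_0 - \tfrac{n}{2}$ to make $|\nabla f|^2 \ge \Lambda$ outside $D(r_0)$, and rearrange (with the sign case split you note being harmless). Nothing is missing.
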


Let us first consider the proof of Theorem 4.1. 

\medskip
\noindent {\it Sketch of the Proof of Theorem 4.1}.  \ Again the proof consists of several estimates on $Rm$ and $|\na Rm|$. 

\medskip
{\bf Step A} ({\it Initial $|Rm|$ estimate}).  There exists a constant $C>0$, depending on the constant $A_0$ (in Lemma 2.5) and the scalar curvature upper bound $R_0$ (in Proposition 4.2), such that
\begin{equation}
|Rm|  \le C \quad \text{on} \ M .
\end{equation} 

\begin {remark}  As we pointed out before, The above estimate $|Rm|\le C$ in Step A is a special case of a more general result due to P.-Y. Chan \cite{Chan2}. 

\end {remark}

 The proof follows essentially from the same argument as in \cite{MW} (Step 2 in the proof of Theorem 3.1) or the proof of Theorem 3.1 in \cite{CaoCui}. Indeed, 
by the assumption of $Rc\ge0$ and Proposition 4.2, it follows that
\begin{equation}
|Rc|\le R\le R_0. 
\end{equation}
Thus, by (4.12) and Lemma 4.1 (taking $\Lambda=4A_0^2$), we get
\begin{equation}
|\nabla Rc|^2\geq 2|Rm|^2 -4A_0^2R_0^2 
\end{equation}
outside some compact set $D(r_0)$. 

On the other hand, by (\ref{id2}), (4.12) and (4.13), 
\begin{eqnarray*}
\Delta_f(|Rm|+\lambda |Rc|^2) & \geq & 2\lambda |\nabla Rc|^2-c |Rm|^2 -\lambda C_1 |Rm| - \lambda C_1\\
& \geq & (4\lambda-c) |Rm|^2 -\lambda C_1 |Rm| - \lambda C_2,
\end{eqnarray*}
where $C_1=C_1(R_0)>0$ depends on the scalar curvature upper bound $R_0$, and $C_2=C_2(A_0, R_0)$ depends on the constants $A_0$ and $R_0$.
By picking 
$$4\lambda=c+2,$$ it follows that 
\begin{eqnarray*}
\Delta_f(|Rm|+\lambda |Rc|^2) & \ge & 2|Rm|^2-\lambda C_1 |Rm|-\lambda C_2\\
 & \ge & (|Rm|+\lambda |Rc|^2)^2 -C_1 (|Rm|+\lambda |Rc|^2)-C_2. 
\end{eqnarray*}
Let $u = |Rm|+\lambda |Rc|^2$, then 
\begin{equation*}
\Delta_f u \ge u^2-C_1 u-C_2 .
\end{equation*}

Now it follows from a standard cut-off function and maximum principle argument (as in Step 1 of the proof of Theorem 3.1) that  
$$ \sup_{x\in M}|Rm|\le \sup_{x\in M} \left(|Rm|+\lambda |Rc|^2\right)\le C.$$
\hfill \qedsymbol{}

\noindent
{\bf Step B} ({\it The $|Rm|$ estimate}). \ There exists some constant $C>0$, depending on $A_0$ and $R_0$, such that for any $0<a<1$, 
\begin{equation}
 |Rm|  \le \frac C {1-a} R^{a} \quad \mbox{on}\ M^4 .
 \end{equation}

By Proposition 4.2, it suffices to consider the case when the scalar curvature $R>0$ everywhere on $M$.  
For any $a\in (0, 1)$, by direct computations and using (2.5), we have 
\begin{eqnarray}
\Delta_{f-2a\ln R} \large(R^{-a}{|Rm|}\large) & \ge &  -C R^{-a}|Rm|+a(1-a)R^{-a}|Rm| |\na \ln R|^2, \notag \\
 \Delta_{f-2a\ln R} \large (R^{-2a}{|Rc|^2}\large) &  \ge & 2(1-a)R^{-2a} {|\na Rc|^2}-CR^{-2a} {|Rc|^2} .
\end{eqnarray} 
Here, $C>0$ is a constant depending on the constants $A_0$ and $R_0$.

Now, we consider the function 
\[v:=\frac {|Rm|}{R^a} +\frac {|Rc|^2}{R^{2a}}.\]
Then, by (4.15) and Lemma 4.1 (with $\Lambda =4A_0^2$), 
\begin{eqnarray*} 
\Delta_{f-2a\ln R} (v) & \ge  &2(1-a)  \frac {|\na Rc|^2} {R^{2a}}  -C_1 \frac {|Rm|}{R^a}-C_2 \frac {|Rc|^2} {R^{2a}}\\
& \ge & 2(1-a) v^2 - C_1 v-C_2 \qquad  \mbox{on} \ M\setminus D(r_0), 
 \end{eqnarray*} 
where  $C_1>0$ and $C_2>0$ depend on $A_0$ and $R_0$.

Next, let $\varphi (t)$ be a smooth  function on $\mathbb R^{+}$  defined by
$$ {\varphi (t) =\left\{
       \begin{array}{ll}
  1, \ \ \quad  \rho\le t\le 2\rho ,\\[4mm]
    0, \ \ \quad  0\le t\le \rho/2 \ \ \mbox{or} \ \ t\ge 3\rho,
       \end{array}
    \right.}$$
such that 
\begin{equation}
 t^2 \left(|\varphi'(t)|^2+|\varphi''(t)|\right)\le c 
\end{equation}
for some universal constant $c$ and arbitrary large $\rho>2r_0$. Take $\varphi=\varphi (F(x))$ as a cut-off function, with support in $D(3\rho)$. Note that 
\begin{equation}
 |\nabla \varphi|=|\varphi'\nabla F|\le  \frac {c} {\rho} |\na F| \qquad  \mbox{and} 
\end{equation}
\begin{equation}
|\Delta_f \varphi| =|\varphi' \Delta_f F +\varphi''|\nabla F|^2|\le \frac {c} {\rho}F +  \frac {c} {\rho^2}|\na F|^2\le 2c
\end{equation}
on $D(3\rho)\setminus D(\rho/2)$.

Set $G=\varphi^2 v$. Then,  
\begin{eqnarray*} \varphi^2 \Delta_ {f-2a\ln R} (G)
&\ge & 2 (1-a) G^2  -C_1 G -C_2  +G\Delta_{f-2a\ln R}  (\varphi^2) \\
& & +2\nabla G\cdot\nabla \varphi^2 -8G|\na \varphi|^2. 
\end{eqnarray*}
Note that, by  (4.17), 
\[ |\na \varphi|^2\leq \frac {c^2}{\rho^2} |\na F|^2\leq \frac c {\rho}.\]
Moreover, 
\[ \Delta_{f-2a\ln R}  (\varphi) = \Delta_{f}  (\varphi) +2a \na \varphi \cdot \na \ln R \]
and 
\begin{equation}
 |\na \ln R| \leq 2\frac {|Rc|} {R} |\na F| \leq 2|\na F|. 
\end{equation}
Thus, by (4.17)-(4.19), we obtain 
\[\Delta_{f-2a\ln R}  (\varphi) \ge -6c \] 
for some universal constant $c>0$.  Therefore,  
\[\varphi^2 \Delta_ {f-2a\ln R} G \ge 2 (1-a) G^2  -C G -C +2\nabla G\cdot\nabla \varphi^2.\]
Now, by the standard maximum principle argument, it follows that $v\le \frac {C}{1-a}$ on $M^n$ for some constant $C>0$ depending on $A_0$ and $R_0$. Therefore, 
\[ \frac {|Rm|}{R^a} \le \frac {|Rm|}{R^a} +\frac {|Rc|^2}{R^{2a}}\le \frac {C}{1-a} \quad \mbox{on} \ M^4.\]
\hfill \qedsymbol{}

We can also prove a similar estimate for the covariant derivative $\na Rm$ of the curvature tensor. 

\medskip
\noindent
{\bf Step C} ({\it The $|\na Rm|$ estimate}). \ There exists some constant $C>0$, depending on $A_0$ and $R_0$,  such that for any $0\leq a<1$, 
\begin{equation}
 |\na Rm|  \le \frac C {1-a} R^{a} \quad \mbox{on}\ M^4 . 
\end{equation}

By (\ref{id2}) and the estimate  $|Rm|\le C$ in Step A, we have 
\[\Delta_{f} |\na Rm|\ge -\frac 3 2 |\na Rm|-c|Rm||\na Rm|\ge -C |\na Rm|,\]
\[\Delta_{f} |Rm|^2\ge 2|\na Rm|^2-2|Rm|^2 -C|Rm|^3\ge 2|\na Rm|^2-C|Rm|^2.\]
Then, by direct computations, we obtain
\begin{eqnarray*} 
\Delta_{f}\frac {|\na Rm|}{R^a}  & \ge &  -C \frac {|\na Rm|} {R^{a}} + a(1-a)\frac {|\na Rm|} {R^{a}} |\na \ln R|^2 -2a \na \left(\frac {|\na Rm|} {R^{a}}\right)\cdot \na \ln R \\
\Delta_{f} \left(\frac {|Rm|^2} {R^{2a}} \right) & \ge  & 2(1-a)  \frac {|\na Rm|^2} {R^{2a}}  -C \frac {|Rm|^2} {R^{2a}} -2a\na \left(\frac {|Rm|^2} {R^{2a}}\right)\cdot \frac {\na R}{R}.
\end{eqnarray*} 

Thus, the function
\[ w:=\frac {|\na Rm|} {R^a} + \frac {|Rm|^2} {R^{2a}}\] satisfies the differential inequality 
\[ \Delta_{f-2a\ln R} (w) \ge  (1-a) w^2 - C_1 w-C_2. \]
Here, we have used the estimate on $|Rm|$ in Step B. 

Now the desired estimate (4.20) follows from a similar maximum principle argument as in the proof of Step B above. 
\hfill \qedsymbol{}

 \medskip
\noindent
{\bf Step D} ({\it The shap estimate on $|Rm|$ when $R$ has polynomial decay}). \  Suppose  in addition the scalar curvature $R$ has at most polynomial decay, then 
there exists a constant $C>0$ such that 
$$ {|Rm|} \le C R \quad \text{on} \ M .$$

Assume that the scalar curvature satisfies $R\ge C/{r^d(x)}$ for some $d\ge 1$ outside a compact set. Then,   we get 
\begin{equation} 
R \ge \frac {C} {r(x)^d}  \ge \frac {C} {|\na f|^d} . 
\end{equation}
It follows from Lemma 2.5, $|Rc|\le R$, (4.21) and (4.20) (with $a=1-1/d$) that 
\[ |Rm| \le A_0 \left( |Rc| + \frac {|\na Rc| }{|\na f|}\right) \le A_0 ( R +  Cd R^{1-1/d} R^{1/d})\le C_1 R.\]
This finishes the sketch proof of Theorem 4.1.

\hfill \qedsymbol{}

\medskip
\noindent {\it Proof of Theorem 4.2.} \ Recall that, by Lemma 2.5, we have 

\begin{equation} 
|Rm|\le A_0\ \left(|Rc|+ \frac {|\nabla Rc|} {|\nabla f|}\right) . 
\end{equation}

On the other hand,  it follows from $Rc\geq 0$ and the assumption of the finite asymptotic scalar curvature ratio (4.4) that 
\begin{equation} 
|Rc|\leq R \leq \frac {C_1} {r^2}, 
\end{equation}
for some constant $C_1>0$. 
Moreover, by picking $a=1/2$ in (4.20), we get  
\begin{equation} 
 |\na Rc|\le c|\na Rm| \leq C R^{1/2} \leq \frac {C_2}  {r},
\end{equation}
while 
\begin{equation} 
 |\na f|^2 =-f -R= O(r^2)
\end{equation} 
by (2.4), Proposition 4.1 and Proposition  4.2. 

Plugging (4.23)-(4.25) into (4.22) leads to 
\[|Rm| \leq \frac C {r^2}.\]
\hfill \qedsymbol{}

\subsection {Curvature estimates for expanders with $R>0$ and proper $f$} \! It turns out that one can also adapt the arguments in the proof of Theorem 4.1 to obtain curvature estimates for $4$-dimensional  
complete noncompact gradient expanding Ricci solitons with bounded, positive scalar curvature $0<R\le R_0$ and proper potential function.
  
\begin{theorem} {\bf (Cao-Liu \cite{CaoLiu})} Let $(M^4, g, f)$ be a $4$-dimensional complete noncompact gradient expanding Ricci soliton with bounded and positive scalar curvature  $0<R\leq R_0$. Assume that $f$ is proper so that $\lim_{r(x)\to \infty}f(x)=-\infty$. Then, for any $\alpha \in (0, 1/2)$, 
$$ |Rm|  \leq C_{\alpha} R^{\alpha} \quad {\mbox{and}} \quad |\nabla Rm|\leq C_{\alpha} R^{\alpha} \quad on \ M^4 $$
for some positive constant $C_{\alpha}>0$ with $C_{\alpha}\to \infty$ as $\alpha \to 1/2$.  
\end{theorem}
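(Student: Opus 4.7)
The plan is to adapt the proof of Theorem 4.1 (sketched in Section 4.2), substituting the assumption $Rc\ge 0$ with the properness hypothesis $\lim_{r(x)\to\infty}f(x)=-\infty$. The key geometric consequence of properness is that, by (2.4), $|\nabla f|^{2}=-f-R$, so $|\nabla f|\to\infty$ at infinity and in particular $|\nabla f|\ge 1$ outside a compact set. This is the exact analog of estimate (3.8) in the shrinker case, and it is precisely what is needed for Lemma 2.5 and Lemma 4.1 to be effective.

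First, I would invoke the theorem of P.-Y.\ Chan \cite{Chan2}, which under the stated hypotheses (bounded scalar curvature and proper $f$) immediately yields the initial estimate $|Rm|\le C$. Next, for $\alpha\in(0,1/2)$, I would introduce the coupled auxiliary function
\[ v := \frac{|Rm|}{R^{\alpha}} + \frac{|Rc|^{2}}{R^{2\alpha}}, \]
compute $\Delta_{f-2\alpha\ln R}(v)$ using the differential inequalities in (4.10) for the expander, the identity (2.5) for $\Delta_{f}R^{-a}$ (where, in the expanding case, the term $-2\lambda=1$ contributes \emph{favorably}), and absorb the cross terms via the conjugate weight. Outside a compact set where $|\nabla f|\ge 1$, Lemma 4.1 converts the $|\nabla Rc|^{2}$ produced by $\Delta_{f}|Rc|^{2}$ into a coercive $|Rm|^{2}$ contribution, leading to a differential inequality of the form
\[ \Delta_{f-2\alpha\ln R}(v) \;\ge\; c\,(1-2\alpha)\,v^{2} - C_{1}\,v - C_{2} \quad \text{on}\ M\setminus D(r_{0}). \]
A cutoff $\varphi(F)$ supported in $D(3\rho)\setminus D(\rho/2)$ combined with the standard maximum principle argument (exactly as in Step~B of Theorem 4.1) then gives $v\le C/(1-2\alpha)$, so $|Rm|\le C_{\alpha}R^{\alpha}$ with $C_{\alpha}\to\infty$ as $\alpha\to 1/2$. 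For the $|\nabla Rm|$ bound I would repeat the procedure for
\[ w := \frac{|\nabla Rm|}{R^{\alpha}} + \frac{|Rm|^{2}}{R^{2\alpha}}, \]
using the identities for $\Delta_{f}|\nabla Rm|$ and $\Delta_{f}|Rm|^{2}$ from (4.10) together with the just-obtained bound on $|Rm|$.

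The main obstacle is the loss of the free estimate $|Rc|\le R$ that in Theorem 4.1 followed from $Rc\ge 0$; this inequality rendered the term $R^{-2\alpha}|Rc|^{2}$ in $v$ automatically controlled and allowed the argument there to run up to $\alpha<1$. Without it, the $|Rc|^{2}$ and $|Rm|$ pieces of $v$ must be coupled through the differential inequality itself, and the quadratic coercivity in $v^{2}$ must be produced jointly from the $|\nabla Rc|^{2}$ term (via Lemma 4.1) and the positive $|Rc|^{4}$ contribution arising from $|Rc|^{2}\Delta_{f}R^{-2\alpha}$. The restriction $\alpha<1/2$ is intrinsic to this coupling: it is the Munteanu--Wang-type threshold $2\alpha<1$ required to control $|Rc|^{2}/R^{2\alpha}$, exactly mirroring Remark~3.2 in the shrinker setting, and the blow-up $C_{\alpha}\to\infty$ as $\alpha\to 1/2$ reflects the same degeneracy.
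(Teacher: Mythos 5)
Your overall strategy (initial bound $|Rm|\le C$, then a coupled quantity mixing $|Rm|$ and $|Rc|^2$ weighted by powers of $R$, Lemma 4.1 to convert $|\na Rc|^2$ into $|Rm|^2$, cutoff in $F$ plus maximum principle) is the right skeleton, and citing Chan for the initial bound is legitimate (cf.\ Remark 4.3). But there is a genuine gap at the heart of the argument: you propose to run Step~B of Theorem 4.1 verbatim with $v=|Rm|/R^{\alpha}+|Rc|^{2}/R^{2\alpha}$ and the conjugated operator $\Delta_{f-2\alpha\ln R}$. That argument requires bounding $G\,\Delta_{f-2\alpha\ln R}(\varphi^{2})$ from below by $-CG$, and the drift contribution $2\alpha\,\na\varphi\cdot\na\ln R$ is controlled there only through the inequality $|\na\ln R|\le 2(|Rc|/R)\,|\na F|\le 2|\na F|$, i.e.\ through $|Rc|\le R$ --- exactly the estimate you correctly identify as lost when $Rc\ge 0$ is dropped. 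Even after one upgrades the Ricci bound to $|Rc|^{2}\le CR$, one only gets $|\na\ln R|\le CR^{-1/2}|\na F|$, which is unbounded where $R$ is small, so the cutoff computation (and hence the maximum principle step) breaks down. Your proposed remedy --- producing $v^{2}$-coercivity jointly from $|\na Rc|^{2}$ and the $|Rc|^{4}$ term --- fixes only the zeroth-order coercivity, not this first-order obstruction; moreover, if one instead uses the plain $\Delta_{f}$ on $|Rm|/R^{\alpha}$, an uncontrolled negative term proportional to $|Rm|^{-1}R^{-\alpha}|\na|Rm||^{2}$ appears (the bracket in (4.31) with $p=1/2$ is negative), and nothing in $v$ absorbs it.

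The paper's proof avoids the conjugated operator altogether. First (Step III) it proves the full improved estimate $|Rc|^{2}\le CR$ by applying $\Delta_{f}$ to $|Rc|^{2}/R^{a}$ with $a=1$: the cross term is absorbed by Cauchy--Schwarz into the $a(a+1)|\na\ln R|^{2}$ term from (2.5), and the coercivity comes from the self-improving term $2a|Rc|^{4}/R^{1+a}$, with no gradient estimate on $\ln R$ needed. Then (Step IV) it uses the quantity $|Rm|^{1+b}/R^{b}+|Rc|^{2}/R^{a}$ with $a=2\alpha$ and $b=\alpha/(1-\alpha)$; the exponent relation $2p=1+b$ is chosen precisely so that the coefficient $2p(2p-1)-4p^{2}b/(b+1)$ of the dangerous gradient term in (4.31) vanishes identically, so the plain $\Delta_{f}$ can be used throughout. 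The restriction $\alpha<1/2$ then arises from requiring $b<a$ and the exponent $2(1-\alpha)>1$ in the resulting inequality $\Delta_{f}(v)\ge\frac{1-2\alpha}{4}v^{2(1-\alpha)}-C_{1}v-C_{2}$, not from a Munteanu--Wang threshold on the Ricci estimate (which in fact reaches $a=1$ here). This choice of test quantity is the essential new idea your proposal is missing.
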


\begin{remark} For $\alpha=0$, the estimate $|Rm| \le C$ in Theorem 4.3 is a special case of Chan \cite{Chan2}, where he assumed $f$ is proper but $R$ is only bounded (i.e., $|R| \le R_0$). 

\end{remark}

\medskip
\noindent {\it Sketch of the Proof of Theorem 4.3}.  Again, we shall divide the proof of Theorem 4.3 into several estimates. 

\medskip
{\bf Step I}  ({\it Initial $|Rc|$ estimate}). There exists a constant $C>0$ such that 
\begin{equation}
|Rc|^2 \le C R^{1/2} \quad \text{on} \ M .
\end{equation}

This follows essentially from the same argument as in Munteanu-Wang \cite{MW} (Step 1 in the proof of Theorem 3.1).  By  (\ref{id2}) and Lemma \ref{Curv}, we have
\begin{equation*}
\Delta _{f}\left\vert \mathrm{Rc}\right\vert ^{2} 
\geq 2\left\vert \nabla \mathrm{Rc}\right\vert ^{2}-\frac{c}{|\na F|}%
\left\vert \nabla \mathrm{Rc}\right\vert \left\vert \mathrm{Rc}\right\vert
^{2}   -c\left\vert \mathrm{Rc%
}\right\vert ^{3}.  \notag
\end{equation*}%
For any $a \in (0, 1)$, set  $u:=|Rc|^2/R^{a}$. Then direct computations  and using (2.5)  yield the following differential inequality, 
\begin{equation*}
\Delta _{f}u\geq \left( 2a-\frac{c}{1-a}\frac{R}{|\na F|^2}\right)
u^{2}R^{a-1}-c\,u^{\frac{3}{2}}\,R^{\frac{a}{2}}-c\,u, 
\end{equation*}%
where $c>0$ is a universal constant. 
Now, setting $a=1/2$, then 
\begin{eqnarray*}
\Delta _{f}u & \geq & \left( 1-2cR_0/{|\na F|^2}\right)
u^{2}R_0^{-1/2}-c\,u^{\frac{3}{2}}R_0^{1/2}\,-C\,u \\
& \geq & \frac 1 2 u^{2}R_0^{-1/2}-C\,u^{\frac{3}{2}}\,-C\,u  \quad \text{on} \ M\backslash D\left( r_{1}\right), 
\end{eqnarray*}
where $u=R^{-1/2}|Rc|^2$, $C> 0$ depends only on $R_0$, and $r_1>0$ is chosen so that 
$$ 1-2cR_0/{|\na F|^2} \geq 1/2  \quad \text{on}\ M\backslash D\left( r_{1}\right),$$
which is possible by Propositions 4.1-4.2 and (4.8).

Now a standard cut-off function and maximum principle argument shows that $u\le C$ on $M$.

\medskip
{\bf Step II}  ({\it Initial $|Rm|$ estimate}). There exists a constant $C>0$ such that 
\begin{equation}
|Rm| \le C \quad \text{on} \ M .
\end{equation}

Since we now have the bound $|Rc|\le C$, this follows from essentially the same argument as in Step A of the proof of Theorem 4.1. 

\medskip
{\bf Step III}  ({\it Improved $|Rc|$  estimate}). There exists a constant $C>0$ such that 
\begin{equation}
|Rc|^2 \le C R \quad \text{on} \ M .
\end{equation}

For any $0<a \leq 1$, by  (2.5), (\ref{id2}), (4.27) and direct computations, we obtain
\begin{equation}
 \Delta_{f} \left(\frac {|Rc|^2} {R^a} \right) \ge  \frac {2(1-a)}{1+a} \frac {|\na Rc|^2} {R^a} -(4C_0+2)  \frac {|Rc|^2} {R^a} + 2a  \frac {|Rc|^4} {R^{1+a}} . 
\end{equation}
Thus, taking $a=1$ and setting $u=\frac {|Rc|^2} {R}$, we have
\[ \Delta_{f} (u) \geq 2u^2-(4C_0+2)u.\]
Now, estimate (4.28) follows from a standard cut-off function and maximum principle argument. 

\medskip
{\bf Step IV}  ({\it The  $|Rm|$ estimate}). There exists a constant $C>0$ such that  for any $\alpha \in (0, 1/2)$, 
\begin{equation}
|Rm|  \leq C_{\alpha} R^{\alpha} \quad \text{on} \ M .
\end{equation}

We  consider the quantity  of the following form suggested to us by P.-Y. Chan, 
\[\frac{|Rm|^{2p}}{R^b}+\frac{|Rc|^{2}}{R^a}.\] 
By (\ref{id2}), (2.5),  the estimate $|Rm|\le C$ in Step II and direct computations, for any $p>0$ and $b>0$ we get  (whenever $|Rm|\neq 0$ in case $p < 1$) 
\begin{equation} 
 \Delta_{f} \left(\frac {|Rm|^{2p}} {R^{b}} \right) \ge  \left[2p(2p-1)-\frac {4p^2b} {(b+1)}\right] \frac {|Rm|^{2p-2}} {R^{b}} |\na |Rm||^2 -Cp \frac {|Rm|^{2p}} {R^{b}}.
\end{equation}  

Now, for any $0<a<1$ and $b\in (0,1)$, let $2p=1+b \in (1, 2)$. Then, by (4.29), (4.31), the estimate $|Rc|^2\le CR$ in Step III,  and applying Lemma 4.1 (with $\Lambda=2A_0^2$), we obtain  
\begin{equation} 
 \Delta_{f} \left(\frac {|Rm|^{1+b}} {R^{b}} +\frac {|Rc|^2} {R^a} \right)
\geq  (1-a) \frac {|Rm|^2} {R^a} -C_1\left( \frac {|Rm|^{1+b}} {R^{b}} + \frac {|Rc|^2} {R^a}\right) -C_2 . 
\end{equation}  
Then, for  any $\alpha \in (0, 1/2)$, we choose $a=2\alpha \in (0, 1), \ b= \alpha/(1-\alpha)<a$ and set 
\[v=\frac {|Rm|^{1+b}} {R^{b}} +\frac {|Rc|^2} {R^a}.\]
Without loss of generality, we may assume $v\leq  2 R^{-b} |Rm|^{1+b} $. Then,
\begin{equation}
 \Delta_{f} (v) \geq \frac {1-2\alpha} {4} v^{2(1-\alpha)}-C_1 v-C_2 .
\end{equation}
Since $1< 2(1-\alpha)$, by a standard maximum principle argument we can conclude from (4.32) that $v\leq C_{\alpha}$ for some positive constant $C_{\alpha}>0$, with $C_{\alpha}\to \infty$ as $\alpha \to 1/2$. 
Therefore, $|Rm|^{1+b} \leq C_{\alpha} {R^{b}}$,  or equivalently
\[ |Rm|\leq C_{\alpha} R^{\frac b {1+b}}= C_{\alpha} R^{\alpha} \quad on \ M^4 .\]

\medskip
{\bf Step V}  ({\it The  $|\na Rm|$ estimate}). There exists a constant $C>0$ such that  for any $\alpha \in (0, 1/2)$, 
\begin{equation}
|\na Rm|  \leq C_{\alpha} R^{\alpha} .
\end{equation}

By direct computations, we have 
\[ \Delta_{f} \left(\frac {|\na Rm|^{1+b}} {R^{b}} +\frac {|Rm|^2} {R^a} \right) \geq (1-a) \frac {|\na Rm|^2} {R^a} -C_3\left( \frac {|\na Rm|^{1+b}} {R^{b}} + \frac {|Rm|^2} {R^a}\right), \]
which is similar to (4.31). 

Now estimate (4.32) follows  similarly as in Step IV. 

\hfill \qedsymbol{}

\section{Curvature estimates for 4D gradient steady Ricci solitons}

In this section, we present the curvature estimates for $4$-dimensional  gradient steady Ricci solitons proved by Cao-Cui \cite{CaoCui} and P.-Y. Chan \cite{Chan1}. In addition, we derive some new estimates for $4$-dimensional  gradient steady Ricci solitons (Theorem 5.2 and Theorem 5.3). 

Let $(M^4, g, f)$ be a $4$-dimensional  gradient steady Ricci solitons satisfying the equation $R_{ij}+\nabla_i\nabla_jf=0$. Equivalently, by setting $F=-f$, we have 
\begin{equation}
R_{ij}=\nabla_i\nabla_jF .
\end{equation}

As is well-known,  compact steady solitons must be Ricci-flat. In dimension $n=2$, Hamilton \cite{Ha88} discovered the first example of a complete noncompact gradient steady
soliton on $\mathbb R^2$, called the {\it cigar soliton}, where the metric is given by
$$ ds^2=\frac{dx^2 +dy^2}{1+x^2+y^2}.$$
The cigar soliton has potential function $F=\log (1+x^2+y^2)$ (which has linear growth in geodesic distance), positive (scalar) curvature $R=4e^{-F}$, and is asymptotic to a round cylinder at infinity.  Furthermore, Hamilton \cite{Ha88} showed that the only
complete steady soliton on a two-dimensional manifold with
bounded (scalar) curvature $R$ which assumes its maximum
at an origin is, up to scaling,  the cigar soliton. 
For $n\geq 3$, Bryant \cite{Bryant} proved that
there exists, up to scaling, a unique complete rotationally symmetric gradient Ricci
soliton on $\Bbb R^n$. The Bryant soliton has linear growth potential function $F$, positive curvature operator $Rm>0$, linear curvature decay $R\le c/F$, and volume growth of geodesic balls $B(0,r)$ on the order of $r^{(n+1)/2}$. In the K\"ahler case,
the author \cite{Cao94} constructed a complete $U(m)$-invariant gradient steady K\"ahler-Ricci soliton on $\mathbb{C}^m$, for $m\geq 2$, with positive sectional curvature and linear growth potential function $F$; its geodesic ball of radius $r$ has volume growth on the order of $r^{m}$ and the scalar curvature has linear decay at infinity. Note that in each of these three examples, the maximum of the scalar curvature $R$ is attained at the origin. One can find additional examples of steady solitons, e.g., in \cite {Iv, FIK, DW1, DW11, BDGW} etc; see also \cite{Cao08b} and the references therein.

Inspired by the  work of Munteanu-Wang \cite{MW}, Cui and the author \cite{CaoCui}\footnote{The preprint was posted on the arXiv in 2014.}  studied curvature estimates of 4-dimensional complete noncompact gradient steady solitons.

\begin{proposition} {\bf (Cao-Cui \cite{CaoCui})} 
Let $(M^4, g, f)$, which is not Ricci-flat,  be a  complete noncompact $4$-dimensional
gradient steady Ricci soliton. If $\lim_{x\to \infty} R(x)=0$, then, for each $0<a<1$, there exists
a constant $C>0$ such that
\begin{equation}
 |Rc|^2\le C {R^a} \qquad  \mbox{and} \qquad \sup_{x\in M} |Rm| \le C .
\end{equation}
Suppose in addition $R$ has at most polynomial decay. Then, for each $0<a<1$, there exists a constant $C>0$ such that
\begin{equation}
 |Rm|^2 \leq C R^{a} . 
\end{equation}
\end{proposition}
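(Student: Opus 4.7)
The overall strategy is to adapt Munteanu and Wang's proof of Theorem 3.1 (for shrinkers) to the steady case, using the identity $R + |\nabla f|^2 = C_0$ from (2.3) in place of the shrinker identity $|\nabla f|^2 = f - R$. Two preliminary observations are crucial. First, since steady solitons are ancient, $R \ge 0$ by B.-L.~Chen; the strong maximum principle applied to $\Delta_f R = -2|Rc|^2 \le 0$ then forces $R > 0$ on all of $M$ (as $(M,g)$ is not Ricci-flat). Second, the hypothesis $R \to 0$ at infinity together with $R + |\nabla f|^2 = C_0$ gives $|\nabla f|^2 \to C_0 > 0$, so $|\nabla f|$ is uniformly bounded above and bounded away from $0$ outside some compact set $K_0$. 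In particular, Lemma 2.5 reduces to $|Rm| \le A_0 (|Rc| + c_1 |\nabla Rc|)$ outside $K_0$ for some constant $c_1 > 0$.

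For the weak estimate $|Rc|^2 \le C R^a$ with $a \in (0,1)$ fixed, I would set $u = |Rc|^2 R^{-a}$ and follow Step 1 of Munteanu--Wang: combining Lemma 2.3 (with $\lambda = 0$), identity (2.5), and Lemma 2.5 (with Young's inequality to absorb the $|\nabla Rc|$ cross term) yields, outside $K_0$,
$$
\Delta_{f-2a\ln R}(u) \;\ge\; 2a\, u^{2} R^{a-1} - c\, u^{3/2} R^{a/2} - c\, u.
$$
Since $R$ is bounded above, $R^{a-1}$ is bounded below by a positive constant, giving a genuine quadratic-in-$u$ term, and another Young step on $u^{3/2}R^{a/2}$ yields $\Delta_{f-2a\ln R}(u) \ge c_1 u^2 - c_2$ outside $K_0$. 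A cutoff $\varphi = \varphi(F)$ with $F = -f$ is well-behaved because $|\nabla F|^2 = C_0 - R$ is bounded and $\Delta_f F = C_0$, so the standard maximum principle argument on $G = \varphi^2 u$ gives $u \le C$ on $M$. The uniform bound $|Rm| \le C$ then follows Step 2 of Munteanu--Wang: rewriting $\Delta_f |Rm| \ge -c|Rm|^2$ as $|Rm|^2 - (c+1)|Rm|^2$, using Lemma 2.5 with $|Rc|^2 \le C$ to get $|Rm|^2 \le C + C|\nabla Rc|^2$, and combining with $\Delta_f |Rc|^2 \ge 2|\nabla Rc|^2 - C|Rm|$, the function $v = |Rm| + K|Rc|^2$ for $K$ large satisfies $\Delta_f v \ge \tfrac{1}{2} v^2 - Cv - C$; the cutoff maximum principle then yields $v \le C$, hence $|Rm| \le C$.

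For the sharp estimate $|Rm|^2 \le C R^a$ under polynomial decay $R \ge c/r^d$, the plan is to reduce the task, via Lemma 2.5, to showing $|\nabla Rc|^2 \le C R^a$: once this holds, Lemma 2.5 and $|\nabla f|$ bounded below give
$$
|Rm|^2 \;\le\; 2 A_0^2 \big( |Rc|^2 + |\nabla Rc|^2 / |\nabla f|^2 \big) \;\le\; C R^a.
$$
To this end, I would first show $|\nabla Rm|$ is bounded by combining $\Delta_f |\nabla Rm|^2 \ge -C|\nabla Rm|^2$ (Lemma 2.4, using $|Rm|$ bounded) with $\Delta_f |Rm|^2 \ge 2|\nabla Rm|^2 - C$ (Lemma 2.3) through the function $|\nabla Rm|^2 + K|Rm|^2$, and then upgrade to $|\nabla Rm|^2 \le C R^a$ by considering $\tilde u = |\nabla Rm|^2 R^{-a}$. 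The polynomial decay $R \ge c/r^d$ plays the role that Chow--Lu--Yang's lower bound $R \ge C/f$ plays in the shrinker proof: it ensures that the positive contribution $2a \tilde u |Rc|^2 / R \ge 2a \tilde u R / n$ from (2.5) (using Cauchy--Schwarz $|Rc|^2 \ge R^2/n$) can be quantitatively controlled on the support of the cutoff.

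The hardest step is this final one. Unlike Step 1, where the Bochner-type inequality for $|Rc|^2$ produces a genuinely quadratic-in-$u$ lower bound after dividing by $R^a$ (through the factor $|Rc|^2/R = u R^{a-1}$), the analogous inequality for $|\nabla Rm|^2/R^a$ is only linear in $\tilde u$, with a positive coefficient of order $R/n$ that vanishes at infinity. Closing the cutoff maximum-principle argument therefore requires a delicate balance between the cutoff decay rate and the polynomial decay rate of $R$, in the spirit of Munteanu--Wang's Step 5; and because in the steady case $|\nabla f|$ is bounded (rather than growing linearly in $r$ as for shrinkers), the behavior of cutoffs $\varphi(F)$ differs substantially from the shrinker setting and the construction has to be redone carefully.
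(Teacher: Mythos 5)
The survey does not actually prove Proposition 5.1 --- it is quoted from Cao--Cui \cite{CaoCui} without proof --- so I can only measure your attempt against the strategy the paper describes (adapting Munteanu--Wang) and against what a complete argument requires. Your treatment of the first half, $|Rc|^2\le CR^a$ and $\sup_M|Rm|\le C$, is essentially right and isolates the correct structural points: $R>0$ by the strong minimum principle applied to $\Delta_f R=-2|Rc|^2\le 0$, and $R\to 0$ together with $R+|\nabla f|^2=C_0$ forces $|\nabla f|$ to be pinched between positive constants outside a compact set, so Lemma 2.5 becomes $|Rm|\le A_0(|Rc|+c_1|\nabla Rc|)$ and the Munteanu--Wang Steps 1--2 go through with the bonus that $R^{a-1}\ge R_0^{a-1}$ is already bounded below. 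One technical caveat: your cut-off $\varphi(F)$ presupposes that $F=-f$ is a proper exhaustion function, which is known for shrinkers (Proposition 3.1) and for steadies with $Rc>0$ (via \cite{CaoChen}, as used in Theorem 5.2), but is not established for a general steady soliton with $R\to 0$. The safe route is to cut off by the distance function and use the Bakry--\'Emery comparison $\Delta_f r\le \frac{n-1}{r}+|\nabla f|$, which is available here because $Rc+\nabla^2f=0$ and $|\nabla f|\le\sqrt{C_0}$.

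The second half, $|Rm|^2\le CR^a$ under polynomial decay, is where your proposal has a genuine gap: you reduce matters to $|\nabla Rc|^2\le CR^a$ (correct reduction, via Lemma 2.5 and $|\nabla f|\ge c>0$), but the differential inequality you then have for $\tilde u=|\nabla Rm|^2R^{-a}$ is of the form $\Delta_{f-2a\ln R}\tilde u\ge -C\tilde u+2a\tilde u\,|Rc|^2/R$, and the ``good'' term $2a\tilde u\,|Rc|^2/R\ge \tfrac{a}{2}\tilde u R$ vanishes at infinity, so it cannot absorb $-C\tilde u$; no maximum principle conclusion follows from $\Delta \tilde u\ge -C\tilde u$ alone. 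You acknowledge this and appeal to ``a delicate balance between the cutoff decay rate and the polynomial decay rate of $R$, in the spirit of Munteanu--Wang's Step 5,'' but that balance is precisely the content of this step and you have not supplied it. Note moreover that the mechanism of Step 5 does not transplant: there the rescue comes from $Rf\ge C$ together with $|\nabla\psi|^2\le f/\rho^2$, whereas in the steady case $|\nabla f|$ is bounded, a cut-off at scale $\rho$ contributes errors of size $\rho^{-2}\sup_{B(\rho)}\tilde u\lesssim \rho^{ad-2}$ under the decay $R\ge c/r^{d}$, and this tends to zero only when $ad<2$ --- so reaching all $a\in(0,1)$ for arbitrary $d$ requires an additional idea (for instance a bootstrap on the exponent, or working with $|\nabla Rc|$ directly as in \cite{CaoCui}). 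As written, the second assertion of the proposition is a plan rather than a proof.
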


Subqequently,  using  the estimates in (5.2), P.-Y. Chan \cite{Chan1} improved the curvature estimate (5.3) and obtained the following sharp result  without assuming the polynomial decay of $R$.

\begin{theorem} {\bf (Chan \cite{Chan1})}  Let $(M^4, g, f)$ be a  $4$-dimensional  complete non-Ricci flat gradient steady Ricci soliton with $\lim_{x\to \infty} R(x)=0$. Then there exists a positive constant $C>0$ such that
\[  |Rm| \leq C R \qquad \mbox{on} \ M. \]
\end{theorem}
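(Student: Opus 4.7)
My plan is to refine the Cao-Cui estimates of Proposition 5.1, which already provide $|Rm|\le C$ globally and $|Rc|^2\le C R^a$ for every $a\in(0,1)$, up to the sharp conclusion $|Rm|\le CR$. The structural advantage at infinity is the conservation law $R + |\nabla f|^2 = C_0$ for steady solitons: since $R\to 0$, one has $|\nabla f|^2 \to C_0$, and hence there exist a compact set $D\subset M$ and a constant $c_0>0$ with $|\nabla f|\ge c_0$ on $M\setminus D$. Moreover, since $(M,g,f)$ is not Ricci-flat, the strong maximum principle applied to $\Delta_f R = -2|Rc|^2 \le 0$ (combined with $R\ge 0$ from B.-L.\ Chen) gives $R>0$ on all of $M$; in particular $R$ is bounded away from $0$ on the compact set $D$, so the sharp estimate $|Rm|\le CR$ is automatic on $D$ from $|Rm|\le C$. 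The rest of the proof is confined to $M\setminus D$.

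On $M\setminus D$, the Munteanu-Wang decomposition (Lemma 2.5) gives
\[
|Rm|\ \le\ A_0\Big(|Rc|+\tfrac{|\nabla Rc|}{|\nabla f|}\Big)\ \le\ \tfrac{A_0}{c_0}\bigl(|Rc|+|\nabla Rc|\bigr),
\]
so it suffices to prove the sharp bounds $|Rc|\le CR$ and $|\nabla Rc|\le CR$. To prove $|Rc|\le CR$, I would mimic Munteanu-Wang's Step 7 with $\lambda = 0$: consider the test function $u = |Rc|^2/R^2$ and use Lemma 2.3 and identity (2.5) to derive a weighted differential inequality of the schematic form
\[
\Delta_{f-2\ln R}\,u\ \ge\ \alpha\,u^{2}R\ -\ c\,u\,|\nabla\ln R|^{2}\ -\ c\,u
\]
on $M\setminus D$ for some $\alpha>0$. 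Hamilton's identity $\nabla_i R = 2R_{ij}\nabla_j f$ combined with the bound $|\nabla f|^2\le C_0$ gives $|\nabla\ln R|^2 \le 4|Rc|^2|\nabla f|^2/R^2 \le 4C_0\,u$, and one couples $u$ with a second auxiliary function (e.g.\ $|Rc|^2/R^{a}$ for $a$ close to $1$, already bounded by Cao-Cui) so that the parasitic quadratic $-cu^{2}$ term produced by this substitution can be absorbed. A cut-off $\psi = \psi(F)$ built from level sets of $F=-f$, which are well-behaved thanks to the lower bound on $|\nabla F|=|\nabla f|$ outside $D$, together with the standard maximum-principle argument applied to $G = \psi^{2} u$, should yield the uniform bound $u\le C$, i.e.\ $|Rc|\le CR$. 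An entirely parallel computation applied to $w = |\nabla Rm|^{2}/R^{2}$, based on the steady Bochner inequality $\Delta_f|\nabla Rm|^{2}\ge 2|\nabla^{2}Rm|^{2}-c|Rm||\nabla Rm|^{2}$ from Lemma 2.4 and the just-obtained $|Rc|\le CR$, then gives $|\nabla Rm|\le CR$, and hence $|\nabla Rc|\le c|\nabla Rm|\le CR$.

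The step I expect to be the main obstacle is the absorption in the differential inequality for $u$. In the shrinking case, Munteanu-Wang benefited from both a favorable sign in the Bochner formula (the linear reaction $+\lambda|Rm|$ with $\lambda = 1/2$) and from the Chow-Lu-Yang quadratic lower bound $R\ge C/f$; neither is available here, since $\lambda = 0$ and no polynomial lower bound on $R$ is known for steady solitons with $R\to 0$. The substitute is the lower bound $|\nabla f|\ge c_0$ on $M\setminus D$, which controls $|\nabla\ln R|^{2}$ by $u$ and, coupled with the a priori suboptimal $|Rc|$ estimate from Proposition 5.1, allows the positive quadratic $\alpha u^{2}R$ to dominate the errors once the cut-off is in place. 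Making this absorption work with a universal constant $C$ that does not blow up as the parameter $a\to 1^{-}$ used in Proposition 5.1 is the delicate point, and corresponds to Chan's main technical refinement of the maximum principle argument.
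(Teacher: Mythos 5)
Your reduction is sound as far as it goes: $R>0$ everywhere by the strong maximum principle applied to $\Delta_f R=-2|Rc|^2\le 0$, the bound $|\nabla f|\ge c_0$ outside a compact set from $R+|\nabla f|^2=C_0$ and $R\to 0$, and, via Lemma 2.5, the theorem follows once one has the sharp estimates $|Rc|\le CR$ and $|\nabla Rc|\le CR$. But the crux, obtaining $|Rc|\le CR$, is exactly where your argument does not close. With $u=|Rc|^2/R^2$ the identity (2.5) produces a good term of order $u^2R$ (from the $2R^{-1}|Rc|^2$ piece), while the parasitic term involving $|\nabla\ln R|^2$ is, by your own estimate $|\nabla\ln R|^2\le 4C_0\,u$, of order $-u^2$ with \emph{no} factor of $R$. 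Since $R\to 0$ at infinity, $\alpha u^2R$ cannot dominate $-Cu^2$ on any neighborhood of infinity, and coupling with the bounded quantity $|Rc|^2/R^{a}$ cannot manufacture a compensating positive term of order $u^2$. So the absorption you defer to ``Chan's main technical refinement'' is not a refinement of your scheme but a different device altogether. (Contrast Theorem 5.2: there $Rc>0$ gives $|Rc|\le R$, hence $|\nabla \ln R|\le 2$, and the quotient argument does go through; in the present setting that gradient bound only becomes available \emph{after} $|Rc|\le CR$ is proved.)

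The missing idea, recorded in Remark 5.1, is to work additively rather than with quotients: apply the maximum principle to $u:=|Rc|^2+|Rc|-CR$ on the exterior of a ball $B_{x_0}(r_0)$. Because $\Delta_f R=-2|Rc|^2$, the barrier $-CR$ contributes $+2C|Rc|^2$ to $\Delta_f u$, while every error term in $\Delta_f(|Rc|^2+|Rc|)$ is controlled by a fixed multiple of $|Rc|^2$ after using Lemma 2.5, $|\nabla f|\ge c_0$, the Cao--Cui bounds (5.2), and the $2|\nabla Rc|^2$ term to absorb $|\nabla Rc|$ via Cauchy--Schwarz; hence $\Delta_f u\ge 0$ outside a compact set for $C$ large. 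Choosing $C$ so that $u<0$ on $\partial B_{x_0}(r_0)$ (possible since $R>0$ there) and noting $u\to 0$ at infinity because $|Rc|,R\to 0$, the maximum principle gives $u\le 0$, i.e.\ $|Rc|\le CR$. Only then is $|\nabla\ln R|\le C$ available, after which quotient computations of the kind you describe (as in Theorems 5.2--5.3) yield $|\nabla Rm|\le CR$ and the conclusion. Note also an ordering problem in your final step: the estimate for $|\nabla Rm|^2/R^2$, as carried out in Theorem 5.3, already uses $|Rm|\le CR$ to absorb the $|Rm|^4/R^4$ term, so you cannot run it before $|Rm|\le CR$ is in hand without rearranging the argument.
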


\begin{remark}
One of the key steps in Chan's proof is to obtain the sharp Ricci curvature estimate. Namely,  
there exist positive constants $r_0>0$, and $C_1>0$ depending on the constant $A_0$ in Lemma 2.5 and $\max |Rc|/R$ over the ball $B_{x_0}(r_0)$, such that
\begin{equation}
 |Rc|\leq C_1 R \qquad \mbox{on} \ M .
\end{equation}
This is achieved by applying the maximum principle argument to the function 
\[u:=|Rc|^2+|Rc|-C R\]
for a suitably large constant $C>0$ such that $u<0$ on $\partial B_{x_0}(r_0)$ and showing that  
\begin{equation}
\Delta_f (|Rc|^2+|Rc|)\geq (6A_0+20A_0^2) |Rc|^2 
\end{equation}
by using the curvature estimates in (5.2). We refer the reader to Chan \cite{Chan1} for more details. 
\end{remark}

Moreover, Cui and the author proved the following estimates for $4$-dimensional
gradient steady Ricci soliton with positive Ricci curvature $Rc>0$. 

\begin{proposition}  {\bf (Cao-Cui \cite{CaoCui})} 
Let $(M^4, g, f)$ be a complete noncompact $4$-dimensional
gradient steady Ricci soliton with positive Ricci curvature $Rc>0$ such that the scalar curvature $R$ attains its maximum
at some point $x_0\in M^4$. Then, $(M^4, g, f)$ has bounded Riemann curvature tensor, 
\begin{equation}
 |Rm|  \le C  \quad \text{on} \ M 
\end{equation}
for some constant $C>0$. 
Moreover, if  in addition $R$ has at most linear decay, then
\[ |Rm|  \le CR  \quad \text{on} \ M .\]
\end{proposition}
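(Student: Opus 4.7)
Since $R$ attains its maximum at $x_0$ and $Rc>0$, evaluating the identity $\nabla_i R = 2R_{ij}\nabla_j f$ at $x_0$ forces $\nabla f(x_0)=0$; normalising the conservation law (2.3) with $\lambda=0$ at $x_0$ then yields $R + |\nabla f|^2 = R_{\max}$ on all of $M$. This gives the basic bounds $0 < R \le R_{\max}$, $|\nabla f|^2 \le R_{\max}$, and (using $Rc\ge 0$, so that $|Rc|^2\le R^2$) also $|Rc|\le R\le R_{\max}$. Moreover $F:=-f$ is a strictly convex exhaustion function with $\nabla^2 F = Rc > 0$, $\Delta_f F = R_{\max}$, and unique critical point $x_0$, so the sublevel sets $D(t)=\{F\le t\}$ are compact and a cut-off $\varphi(F)$ supported on $D(3\rho)\setminus D(\rho/2)$ automatically satisfies $|\nabla\varphi|\le c\rho^{-1}\sqrt{R_{\max}}$ and $|\Delta_f\varphi|\le c/\rho$, in direct analogy with the expanding case of Theorem 4.1.

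For the bounded estimate $|Rm|\le C$, I would run the steady-case version of Step A in the proof of Theorem 4.1. Granted a uniform lower bound $|\nabla f|\ge\delta>0$ outside some $D(r_0)$ (discussed below), the steady analog of Lemma 4.1 gives $|\nabla Rc|^2 \ge \alpha|Rm|^2 - \beta$ outside $D(r_0)$ for constants $\alpha>0$, $\beta>0$ depending on $\delta$, $A_0$, $R_{\max}$. Using the identities $\Delta_f|Rm|\ge -c|Rm|^2$ and $\Delta_f|Rc|^2\ge 2|\nabla Rc|^2 - 4|Rm||Rc|^2$ from Lemma 2.3 specialised to $\lambda=0$, the function $v := |Rm| + \mu|Rc|^2$ with $\mu$ chosen so that $4\mu\alpha$ dominates $c$ satisfies, on $M\setminus D(r_0)$, a differential inequality of the form
\[ \Delta_f v \ge v^2 - C_1 v - C_2, \]
with $C_1,C_2$ depending only on $A_0$ and $R_{\max}$. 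Applying the cut-off/maximum principle argument to $G=\varphi^2 v$ exactly as in Step 1 of the proof of Theorem 3.1 then yields $v\le C$ on $D(2\rho)\setminus D(\rho)$ with $C$ independent of $\rho$, and hence $|Rm|\le v\le C$ on $M$.

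For the sharp estimate $|Rm|\le CR$ under the additional hypothesis that $R$ has at most linear decay, i.e. $R\le c/r(x)$ at infinity, we get $R\to 0$ and therefore $|\nabla f|^2\to R_{\max}$, so $|\nabla f|$ is uniformly bounded below outside a compact set. Starting from $|Rm|\le C$, I would iterate as in the later steps of the Munteanu–Wang proof of Theorem 3.1. Using (2.5) with $a=1$ together with Lemma 2.3 and the bound $|Rm|\le C$, the function $u := |Rc|^2/R$ satisfies $\Delta_f u\ge 2u^2 - Cu$, and the cut-off/maximum principle argument (now on all of $M$) yields $|Rc|^2\le CR$. A parallel computation for $w := |\nabla Rm|^2/R$, adapting Step 5 of Theorem 3.1 to $\lambda=0$ and using $|Rc|^2\le CR$ and $|Rm|\le C$, produces $|\nabla Rm|^2\le CR$. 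Feeding these two bounds and the lower bound on $|\nabla f|$ into Lemma 2.5 then gives $|Rm|\le A_0(|Rc| + |\nabla Rc|/|\nabla f|)\le CR$, as desired.

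The main obstacle I anticipate is establishing the uniform lower bound $|\nabla f|\ge\delta>0$ outside $D(r_0)$ required in Part 1, since a priori nothing prevents $R$ from approaching $R_{\max}$ along a sequence tending to infinity. For Part 2 the assumed scalar decay handles this automatically, but for Part 1 one must exploit further structure: the strict $f$-superharmonicity $\Delta_f R = -2|Rc|^2 < 0$ combined with the strict convexity of $F$ (which forces $\{R=R_{\max}\} = \{\nabla f = 0\} = \{x_0\}$ by uniqueness of the minimum of a strictly convex function) should imply, via a compactness argument applied to the closed set $\{R\ge R_{\max}-\varepsilon\}$, that this set is relatively compact for every $\varepsilon>0$. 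Once such a lower bound on $|\nabla f|$ is in hand, the rest of the argument proceeds as sketched.
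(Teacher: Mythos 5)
Your Part~1 is essentially the correct argument and is the steady analogue of Step~A in the proof of Theorem~4.1; the normalization $R+|\nabla f|^2=R_{\max}$, the bound $|Rc|\le R$ from $Rc>0$, and the Riccati-type inequality for $|Rm|+\mu|Rc|^2$ are all right. The gap you flag --- the uniform lower bound $|\nabla f|\ge\delta$ outside a compact set --- is the genuine crux, and your proposed fix does not yet close it: $f$-superharmonicity of $R$ gives no control at infinity on a noncompact manifold, and knowing only that $\{R=R_{\max}\}=\{x_0\}$ does not exclude $R\to R_{\max}$ along a sequence going to infinity. The missing link (present in the paper's proof of Theorem~5.2) is pointwise monotonicity rather than a PDE argument: $\langle\nabla R,\nabla F\rangle=-2\,Rc(\nabla F,\nabla F)<0$ away from $x_0$, so $R$ is strictly decreasing along the integral curves of $\nabla F$; since $F$ is a proper convex exhaustion (cf.\ the linear growth estimate (5.8) from \cite{CaoChen}), every point outside $D(t_0)$ flows back to the compact level set $\{F=t_0\}$, on which $R\le R_{\max}-\epsilon_0$ for some $\epsilon_0>0$, whence $|\nabla F|^2=R_{\max}-R\ge\epsilon_0$ outside $D(t_0)$. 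With that in hand your maximum principle argument for $|Rm|\le C$ goes through.

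Part~2 does not close, for two separate reasons. First, ``at most linear decay'' here means the \emph{lower} bound $R\ge c/(1+r(x))$ (decay no faster than linear), exactly as ``at most polynomial decay'' means $R\ge C/r^d$ in Step~D of Theorem~4.1 and ``at most quadratic decay'' means $R\ge C/f$ in Proposition~3.3; you have read it as the upper bound $R\le c/r$. Second, and independently, your final step is quantitatively wrong: from $|Rc|\le R$, $|\nabla Rc|\le c|\nabla Rm|\le CR^{1/2}$ and $|\nabla f|\ge\delta$, Lemma~2.5 gives only $|Rm|\le A_0\bigl(R+CR^{1/2}/\delta\bigr)\le CR^{1/2}$, which is strictly weaker than $CR$ since $R$ is bounded. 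In the shrinking case the analogous Step~8 succeeds because $|\nabla f|\sim\sqrt f$ grows and $R\ge C/f$ converts $R^{1/2}/|\nabla f|$ into $CR$; in the steady case $|\nabla f|$ is merely bounded, so that mechanism is unavailable and the decay hypothesis must enter elsewhere. To reach $|Rm|\le CR$ one needs either an estimate of the form $|\nabla Rc|\le CR$ (which is essentially Theorem~5.3 and is derived \emph{after} the sharp $|Rm|$ bound, not before), or the route of Theorem~5.2: apply the maximum principle with the drifted operator $\Delta_{f-2\ln R}$ to $v=|Rm|/R+\lambda|Rc|^2/R^2$, using $|Rc|\le R$, the gradient bound $|\nabla\ln R|\le 2R^{-1}|Rc|\,|\nabla F|\le 2$, and Lemma~2.5 to bound $|\nabla Rc|^2/R^2$ from below by $c\,v^2-C$; that argument in fact removes the linear-decay hypothesis altogether.
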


Similar to Theorem 5.1, we can also remove the assumption that $R$ has at most linear decay in Proposition 5.2. 

\begin{theorem}
Let $(M^4, g, f)$ be a complete noncompact $4$-dimensional
gradient steady Ricci soliton with positive Ricci curvature $Rc>0$ such that the scalar curvature $R$ attains its maximum at some point $x_0\in M$. Then, $$ |Rm|  \le C R \quad \mbox{on} \ M.$$
\end{theorem}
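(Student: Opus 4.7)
The strategy is to adapt P.-Y.~Chan's proof of Theorem 5.1 to the present setting by replacing his hypothesis $\lim_{x\to\infty} R(x)=0$ with the structural consequences of $Rc>0$ and the attainment of $\max R$. In particular, the ``sharp Ricci'' part of Chan's argument is automatic here, and the main task is to supply a substitute for the Chow--Lu--Yang lower bound $R\geq C/f$ used in the shrinking case.

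First, since $Rc(x_0)>0$ is positive definite and $\nabla R(x_0)=0$, Hamilton's identity (2.2) $\nabla R = 2Rc(\nabla f)$ forces $\nabla f(x_0)=0$. Then (2.3) (normalized, with $\lambda=0$) reads
\[
R+|\nabla f|^2 = R_0, \qquad R_0:=R(x_0)=\max_M R,
\]
and the positivity of $Rc$ in dimension four, together with the Cauchy--Schwarz inequality applied to its positive eigenvalues summing to $R$, yields the pointwise double bound
\[
\tfrac14 R^2 \leq |Rc|^2 \leq R^2.
\]
Hence $|Rc|\leq R$ is the sharp Ricci estimate for free. Moreover, $F:=-f$ satisfies $\nabla^2 F = Rc>0$, so $F$ is strictly convex with unique critical point $x_0$, and Proposition 5.2 supplies $|Rm|\leq C$ on $M$.

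By Lemma 2.5 applied on $M\setminus\{x_0\}$ together with $|Rc|\leq R$,
\[
|Rm|(x) \leq A_0\, R(x) + A_0\,\frac{|\nabla Rc|(x)}{|\nabla f|(x)}.
\]
Thus it suffices to prove the gradient Ricci estimate $|\nabla Rc|\leq CR$ on $M$, together with a positive lower bound $|\nabla f|\geq \delta>0$ outside a compact neighborhood of $x_0$. The latter reduces to showing $R$ is bounded away from $R_0$ outside a compact set, which follows from the strong maximum principle applied to the $f$-subharmonic function $R_0-R\geq 0$ (using $\Delta_f(R_0-R)=2|Rc|^2\geq R^2/2$, vanishing only at the unique critical point $x_0$ of the strictly convex $F$), combined with the linear growth of $F$. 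Inside any fixed neighborhood of $x_0$, the claim $|Rm|\leq CR$ is trivial by continuity, since $R>0$ is bounded below on that compact set while $|Rm|$ is bounded above.

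The technical heart is the gradient estimate $|\nabla Rc|\leq CR$. Consider the weighted quantity
\[
u := \frac{|\nabla Rc|^2}{R^2}.
\]
Using the Bochner-type inequality $\Delta_f|\nabla Rc|^2 \geq 2|\nabla^2 Rc|^2 - c|Rm|\,|\nabla Rc|^2$ (obtained by commuting derivatives as in Lemma 2.4, applied to $Rc$), and the identity from (2.5) specialized to $\lambda=0$ and $a=2$,
\[
\Delta_f R^{-2} = 4R^{-3}|Rc|^2 + 6R^{-4}|\nabla R|^2 \geq R^{-1} + 6R^{-4}|\nabla R|^2,
\]
where we used $|Rc|^2 \geq R^2/4$, together with the bound $|Rm|\leq C$, one derives a favorable differential inequality for $u$ with respect to the drift Laplacian $\Delta_{f-2\ln R}$. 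A standard cutoff-and-maximum-principle argument on sub-level sets of $F$, in the spirit of Step 1 in the sketch of Theorem 3.1, then yields $u\leq C$, i.e.~$|\nabla Rc|\leq CR$.

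\textbf{Main obstacle.} The crux is the gradient Ricci estimate $|\nabla Rc|\leq CR$. In the Munteanu--Wang shrinking setting (Steps 5 and 9 of Theorem 3.1), the corresponding estimates hinge on the quadratic lower bound $R\geq C/f$ of Chow--Lu--Yang, which has no counterpart for steady solitons. The substitute here is the pointwise lower bound $|Rc|^2\geq R^2/4$ coming from $Rc>0$, which provides the favorable reaction term $\Delta_f R \leq -\tfrac12 R^2$ required to drive the maximum principle for $u$. Identifying the correct combination of weighted drift, cutoff, and auxiliary terms (for example, adding a multiple of $|Rc|^2/R$ to $u$) to exploit this structure cleanly is the main technical task.
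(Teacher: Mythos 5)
Your reduction and the peripheral facts are fine: $|Rc|\le R$ and $|Rc|^2\ge R^2/4$ from $Rc>0$ in dimension four, the normalization $R+|\nabla f|^2=1$, the boundedness $|Rm|\le C$ from Proposition 5.2, and the lower bound $|\nabla f|\ge c>0$ outside a compact set (the paper gets this the same way, from $R$ strictly decreasing along $\nabla F$). But the step you yourself call the technical heart --- the gradient estimate $|\nabla Rc|\le CR$ --- is not carried out, and the route you sketch for it does not close. For $u=|\nabla Rc|^2/R^2$, the Bochner term $2|\nabla^2 Rc|^2$ gives no control on $|\nabla Rc|^2$, so the only reaction terms available are $|\nabla Rc|^2\,\Delta_f R^{-2}\ge uR$ (from $|Rc|^2\ge R^2/4$) against $-Cu$ (from $-c|Rm|\,|\nabla Rc|^2$ with $|Rm|\le C$). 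The net coefficient $R-C$ is negative wherever $R$ is small, so the differential inequality for $u$ is only linear with a bad sign and the standard cutoff-plus-maximum-principle argument does not apply. This is exactly the obstruction of Steps 5 and 9 of Munteanu--Wang, which they overcame only by invoking the Chow--Lu--Yang bound $R\ge C/f$; no such polynomial lower bound on $R$ is available for steady solitons, and your suggested fix (``adding a multiple of $|Rc|^2/R$'') is not developed.

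The paper's proof avoids this entirely by never estimating $|\nabla Rc|/R$ in advance. It works with the coupled quantity $v=\frac{|Rm|}{R}+\lambda\frac{|Rc|^2}{R^2}$ and the drift operator $\Delta_{f-2\ln R}$, using $|\nabla\ln R|\le 2R^{-1}|Rc|\,|\nabla F|\le 2$. The Bochner inequality for $|Rc|^2$ produces the \emph{good} term $+\lambda|\nabla Rc|^2/R^2$ on the right-hand side, and by Lemma 2.5 together with $|\nabla f|\ge c$ at infinity and $|Rc|\le R\le 1$, this term dominates $|Rm|^2/R^2$ up to a constant; hence one obtains a genuine quadratic reaction $\Delta_{f-2\ln R}(v)\ge v^2-C$, to which the maximum principle applies. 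The estimate $|\nabla Rm|\le CR$ (hence $|\nabla Rc|\le CR$) is then deduced \emph{afterwards}, in Theorem 5.3, using $|Rm|\le CR$ as input. In short, your logical order (gradient estimate first, then $|Rm|\le CR$ via Lemma 2.5) inverts the one that actually works, and the first step of your order is precisely the hard, unresolved one.
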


\begin{proof}  Since the scalar curvature $R>0$, by scaling the metric $g$, we can normalize Eq. (2.3) as
\[ R+|\na f|^2 =1. \]

Thus, 
\begin{equation}
 R\le 1    \quad \text{and} \quad |\na f|^2 \le 1 \quad \text{on}\  M .  
\end{equation}

At the same time, since $Rc>0$, $F=-f$ is convex. From \cite{CaoChen} we also know that $F$ grows linearly in geodesic distance,
\begin{equation}
 c_1 r(x)-c_2 \le F(x) \le r(x) +F(x_0)
\end{equation}
for some constants $c_1>0$ and $c_2>0$, where $r(x)=d(x_0, x)$ is the distance function from $x_0$.  Then it follows that 
$x_0$ is also the unique minimum/critical point of $F$, and $R(x_0)=\max_{x\in M} R=1$. 
Next, by (2.5), we have
\begin{equation}
\Delta_{f} (R^{-a})\geq a(a+1)R^{-a}|\na \ln R|^2 .
\end{equation}
Also, by Lemma 2.3, curvature estimate (5.6) and direct computations, 
 \begin{eqnarray*} 
 \Delta_{f} \large ({R^{-1}}{|Rm|}  \large) & = & R^{-1} \Delta_{f} |Rm| +|Rm|\Delta_{f} (R^{-1})+2\na (R^{-1}|Rm|R)\cdot\na (R^{-1})\\
& \ge & -c R^{-1} {|Rm|^2}  +  2R^{-1} {|Rm|}  |\na \ln R|^2 -2\na ({R^{-1}} {|Rm|})\cdot \na \ln R \\
& &   -2 {R^{-1}} {|Rm|}  |\na \ln R|^2\\\
& \ge &  -c {R^{-1}} |Rm|^2   -2\na ({R^{-1}}{|Rm|})\cdot \na \ln R .
\end{eqnarray*}  

Thus,
\begin{equation}
\Delta_{f-2\ln R} \left(\frac {|Rm|} {R^{}} \right)  \geq -c \frac {|Rm|^2}  {R^{}}, 
\end{equation}
where $c$ is a universal constant.  

Similarly,  by Lemma 2.3 and direct computations, we have
\begin{eqnarray*} 
 \Delta_{f} \left(\frac {|Rc|^2} {R^{2}} \right) & = & R^{-2} \Delta_{f} (|Rc|^2) +|Rc|^2\Delta_{f} (R^{-2})+2\na |Rc|^2\cdot\na (R^{-2})\\
& \ge & 2 \frac {|\na Rc|^2} {R^{2}} -C \frac {|Rc|^2} {R^{2}} +2\frac {|Rc|^2} {R^{2}} \left(2 \frac {|Rc|^2} {R}+3|\na \ln R|^2\right)\\
&  & -2\na \left(\frac {|Rc|^2} {R^{2}}\right)\cdot \na \ln R -4 \frac {|Rc|^2} {R^{2}} |\na \ln R|^2-4\frac {|Rc|} {R^{}}\frac {|\na Rc|} {R^{}} |\na \ln R|\\
& \ge & 2 \frac {|\na Rc|^2} {R^{2}} -C \frac {|Rc|^2} {R^{2}} 
 +2\frac {|Rc|^2} {R^{2}}|\na \ln R|^2 -4\frac {|Rc|} {R^{}}\frac {|\na Rc|} {R^{}}  |\na \ln R|
\\
& & 
-2\na \left(\frac {|Rc|^2} {R^{2}}\right)\cdot \na \ln R \\
& \ge &  \frac {|\na Rc|^2} {R^{2}} -C \frac {|Rc|^2} {R^{2}} 
 -2\frac {|Rc|^2} {R^{2}}|\na \ln R|^2 -2\na \left(\frac {|Rc|^2} {R^{2}}\right)\cdot \na \ln R ,
\end{eqnarray*}  
where we have used  the fact that 
\[  \frac {|\na Rc|^2} {R^{2}} +4\frac {|Rc|^2} {R^{2}}|\na \ln R|^2 \ge 4\frac {|Rc|} {R^{}}\frac {|\na Rc|} {R^{}} |\na \ln R|.  \]

On the other hand, from (5.7) we observe that  
\begin{equation*}
|Rc|^2\leq R^2\le 1   \quad \text{and} \quad |\na F|^2 \le 1 . 
\end{equation*}
Hence, 
\begin{equation}
|\na \ln R|\leq 2 R^{-1} |Rc| |\na F|\leq 2 .  
\end{equation}
Therefore, 
\begin{equation}
\Delta_{f-2\ln R} \left(\frac {|Rc|^2} {R^{2}} \right)\ge  \frac {|\na Rc|^2} {R^{2}}  -C . 
\end{equation}
Combining (5.10) and (5.12), we get 
\[\Delta_{f-2\ln R} \left(\frac {|Rm|} {R^{}} +\lambda \frac {|Rc|^2} {R^{2}} \right)\ge \lambda \frac {|\na Rc|^2} {R^{2}}-c \frac {|Rm|^2}  {R^{}}-\lambda C. \]

Since $Rc>0$,  $\na R=-2 Rc(\na F, \cdot)$, and $\na F\neq 0$ on $M\backslash \{x_0\}$, the scalar curvature $R$ is strictly decreasing along $\na F$ direction.  Thus, it follows from $R+ |\na F|^2=1$ that $|\na F|\ge c$ for some constant $c>0$ outside the compact set $\{F\ge r_0\}$ for some $r_0>0$ sufficiently large. 

Now,  by applying Lemma 2.5 and choosing $\lambda$ suitably large, the function  
$$v:=\frac {|Rm|} {R^{}} +\lambda \frac {|Rc|^2} {R^{2}} $$ satisfies the differential inequality 
\[\Delta_{f-2\ln R} (v)\geq v^2-C.\]
Thus, a standard maximum principle argument implies $v \le C$, hence 
\[ |Rm| \le CR \quad \text{on} \ M .\]
\end{proof}

Finally, we can also prove estimates for the covariant derivative $\na Rm$ of the curvature tensor in both Theorem 5.1 and Theorem 5.2. 
\begin{theorem}
Let $(M^4, g, f)$ be a $4$-dimensional complete noncompact non Ricci-flat gradient steady Ricci soliton with 
either 

\smallskip
(a) $\lim_{x\to \infty} R(x)=0$, or 

\smallskip
(b) $Rc > 0$ and  $R$ attains its maximum at some point $x_0\in M$. 

\smallskip
\noindent Then, there exists some constant $C > 0$, depending on $A_0$ such that 
\[ |\na Rm| \leq  CR  \quad \mbox{on} \ M. \]
\end{theorem}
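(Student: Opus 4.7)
The plan is to mimic the strategy used in Steps~5 and 9 of Munteanu-Wang's proof of Theorem~3.1, with the twist that in the steady setting ($\lambda=0$) the favorable positive term $3\lambda|\nabla Rm|$ present in the shrinking case is absent; I will manufacture a positive contribution instead by coupling $|\nabla Rm|^2$ with $|Rm|^2$. Under the hypotheses, Theorems 5.1 and 5.2 already give the sharp bound $|Rm|\le C_0 R$ (hence $|Rc|\le C_0 R$) on $M$ in both cases (a) and (b). Normalizing (2.3) so that $R+|\nabla f|^2=1$, we then have $|\nabla f|\le 1$ and $|\nabla\ln R|=|\nabla R|/R\le 2|Rc||\nabla f|/R\le 2C_0$. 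Moreover $F:=-f$ is a proper exhaustion in both cases, and $R>0$ on $M$ for non-Ricci-flat steady solitons by the strong maximum principle applied to $\Delta_f R=-2|Rc|^2$.

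Setting $p:=|\nabla Rm|^2/R^2$ and $w:=|Rm|^2/R^2$, I would work with $u:=p+\lambda w$ for $\lambda>0$ to be chosen large. Using Lemma~2.3 in the steady case, the identity (2.5) with $a=2$, and the above pointwise bounds, a direct computation (paralleling that in the proof of Theorem~5.2) yields
\[
\Delta_f w\ge 2p-C_2\,w-4\nabla w\cdot\nabla\ln R,\qquad \Delta_f p\ge -C_3\,p-4\nabla p\cdot\nabla\ln R,
\]
for constants $C_2,C_3>0$. Absorbing the drift terms into the weighted Laplacian $\Delta_{f-4\ln R}$ and combining gives
\[
\Delta_{f-4\ln R}\,u\ge(2\lambda-C_3)\,p-\lambda C_2\,w.
\]
Choosing $\lambda$ large enough that $2\lambda-C_3\ge 2$, and using $w\le C_0^2$ together with $p=u-\lambda w\ge u-\lambda C_0^2$, this collapses to the clean linear inequality
\[
\Delta_{f-4\ln R}\,u\ge 2\,u-C_5 \quad \text{on } M
\]
for some constant $C_5>0$.

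It then remains to run a maximum principle argument with cutoff, modeled on Step~5 of the Munteanu-Wang proof. Taking $\psi=\psi(F)$ with $\psi(t)=1-t/\rho$ for $0\le t\le\rho$ and $\psi\equiv 0$ for $t\ge\rho$, the function $G:=\psi^2 u$ is continuous, vanishes outside $\{F\le\rho\}$, and (since $R>0$ forces $u$ to be bounded on every compact sublevel set) attains its maximum at some interior point $q$. The relations $\nabla G(q)=0$ and $\Delta_{f-4\ln R}G(q)\le 0$, together with the estimates $|\nabla\psi|,\,|\Delta_{f-4\ln R}\psi|\le C/\rho$ (which use that $\Delta_{f-4\ln R}F=1+4\nabla\ln R\cdot\nabla F$ is bounded) and the linear inequality above, force either $u(q)\le C_5/2$ or $\psi(q)\le C'/\rho$ (i.e.\ $F(q)\ge\rho-C'$). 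Iterating over dyadic scales $\rho=2^k\rho_0$ then yields a uniform bound $u\le C$ on all of $M$, and hence $|\nabla Rm|^2/R^2\le u\le C$, i.e.\ $|\nabla Rm|\le CR$, together with the estimate $|\nabla\ln R|\le C$ as an immediate consequence.

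The hard part will be controlling the ``bad'' case of the cutoff argument, where the maximum of $\psi^2 u$ lies in the thin band $\{\rho-C'<F<\rho\}$. In the shrinking setting, Chow-Lu-Yang's lower bound $Rf\ge C>0$ directly controls $u(q)$ in this regime, but no analogous scalar curvature lower bound $RF\ge C>0$ is currently available for steady solitons, particularly in case (a). Closing the dyadic iteration $M_{\rho/2}\le \tfrac{C_5}{2}+\tfrac{C'\,M_\rho}{\rho^2}$ therefore requires some additional input, most naturally the derivation of a scalar curvature lower bound at infinity (accessible in case (b) along the lines of Cao-Chen), after which the argument proceeds as sketched.
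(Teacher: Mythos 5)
Your proposal does not close. You have correctly identified the obstruction yourself: coupling $p=|\nabla Rm|^2/R^2$ with $w=|Rm|^2/R^2$ only produces the \emph{linear} inequality $\Delta_{f-4\ln R}\,u\ge 2u-C_5$, because the good term $2|\nabla Rm|^2/R^2$ generated by the evolution of $|Rm|^2/R^2$ is merely linear in $p$. A linear inequality of this type can only be handled by the Munteanu--Wang Step~5/Step~9 device, which hinges on the Chow--Lu--Yang lower bound $Rf\ge C>0$ --- and no such polynomial scalar curvature lower bound is available for steady solitons (indeed the cigar has $R\sim e^{-F}$, so $RF\to 0$). So the ``hard part'' you defer at the end is not a technicality; it is the whole difficulty, and as written the argument does not prove the theorem in either case (a) or case (b).

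The paper's proof avoids this entirely by a different choice of test function: it takes the \emph{first} power of the gradient, $w:=|\nabla Rm|/R+|Rm|^2/R^2$, rather than the square. From $\Delta_f|\nabla Rm|\ge -C|\nabla Rm|$ one gets $\Delta_{f-2\ln R}(R^{-1}|\nabla Rm|)\ge -CR^{-1}|\nabla Rm|$, while $\Delta_{f-2\ln R}(|Rm|^2/R^2)\ge |\nabla Rm|^2/R^2-C|Rm|^2/R^2$. The point is that the good term $|\nabla Rm|^2/R^2$ is now \emph{quadratic} in the first summand of $w$; using $(a+b)^2\le 2a^2+2b^2$ together with the already-established sharp bound $|Rm|\le CR$ (Theorems 5.1/5.2) to control $|Rm|^4/R^4$, one arrives at
\[
\Delta_{f-2\ln R}(w)\ \ge\ w^2-C_1 w-C_2 ,
\]
which is a genuinely quadratic differential inequality. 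The standard cutoff-and-maximum-principle argument (as in Step 1 or Step 2 of Theorem 3.1) then gives $w\le C$ with no scalar curvature lower bound needed. Your preliminary reductions (the normalization $R+|\nabla f|^2=1$, the bound $|\nabla\ln R|\le C$ from $|Rc|\le CR$, and the use of Theorems 5.1--5.2) are all sound and also appear in the paper; the fix is simply to replace $|\nabla Rm|^2/R^2$ by $|\nabla Rm|/R$ in your coupled quantity and rerun the computation with the weight $f-2\ln R$.
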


\begin{proof}  By Lemma 2.4 and (5.2) or (5.6), we have 
\[\Delta_{f} |\na Rm|\ge -c|Rm||\na Rm|\ge -C |\na Rm|.\]
Then, by (5.9) and direct computations, 
\begin{eqnarray*} 
\Delta_{f} (R^{-1} {|\na Rm|}) & = & R^{-1} \Delta_{f} |\na Rm| +|\na Rm|\Delta_{f} (R^{-1})+2\na (R^{-1}|\na Rm|R)\cdot\na (R^{-1})\\
& \ge & -C R^{-1} |\na Rm| +  2R^{-1}|\na Rm|  (R^{-1}|Rc|^2 + |\na \ln R|^2)\\
& &  -2 \na (R^{-1}{|\na Rm|})\cdot \na \ln R -2 R^{-1} {|\na Rm|} \cdot |\na \ln R|^2\\
& \ge & -C R^{-1} |\na Rm| 
-2 \na (R^{-1}{|\na Rm|})\cdot \na \ln R. 
\end{eqnarray*}
Thus, 
\begin{equation}
\Delta_{f-2\ln R} (R^{-1} {|\na Rm|}) \ge -C R^{-1} |\na Rm| .
\end{equation}
Also, by Lemma 2.4 and  (5.2) or (5.6), we have 
\[\Delta_{f} |Rm|^2\ge 2|\na Rm|^2-C|Rm|^3\ge 2|\na Rm|^2-C|Rm|^2.\]
Then, by (5.9) and direct computations, we obtain
\begin{eqnarray*} 
 \Delta_{f} \large (\frac {|Rm|^2} {R^{2}} \large) & = & R^{-2} \Delta_{f} (|Rm|^2) +|Rm|^2\Delta_{f} (R^{-2})+2\na |Rm|^2\cdot\na (R^{-2})\\
& \ge & 2 \frac {|\na Rm|^2} {R^{2}} -C \frac {|Rm|^2} {R^{2}} +2\frac {|Rm|^2} {R^{2}} \left( \frac {|Rc|^2} {R}+3\frac{|\na  R|^2}{R^2}\right)\\
&  & -2\na \left(\frac {|Rm|^2} {R^{2}}\right)\cdot \frac {\na R}{R} -4 \frac {|Rm|^2} {R^{2}} \frac {|\na R|^2}{R^2}-4\frac {|Rm|} {R^{}}\frac {|\na Rm|} {R^{}} {|\na \ln R|}\\
& \ge & 2 \frac {|\na Rm|^2} {R^{2}} -C \frac {|Rm|^2} {R^{2}} -2\na \left(\frac {|Rm|^2} {R^{2}}\right)\cdot \frac {\na R}{R} \\
& & +2\frac {|Rm|^2} {R^{2}}|\na  \ln R|^2 -4\frac {|Rm|} {R^{}}\frac {|\na Rm|} {R^{}} |\na \ln R|\\
& \ge &  \frac {|\na Rm|^2} {R^{2}}  -C \frac {|Rm|^2} {R^{2}} -2\frac {|Rm|^2} {R^{2}}|\na  \ln R|^2-2\na \left(\frac {|Rm|^2} {R^{2}}\right)\cdot \na \ln R.
\end{eqnarray*}  
Since $|Rc|^2 \le CR^2$ and $|\na F|^2 \le 1$, we have 
\begin{equation}
|\na \ln R|\leq 2 R^{-1} |Rc| |\na F|\leq C .  
\end{equation}
It follows that  
\begin{equation}
\Delta_{f-2\ln R} \left(\frac {|Rm|^2} {R^{2}} \right)  \ge  \frac {|\na Rm|^2} {R^{2}}  -C \frac {|Rm|^2} {R^{2}}  .
\end{equation}
Combining (5.13) and (5.15), and setting 
\[ w=\frac {|\na Rm|} {R} + \frac {|Rm|^2} {R^{2}},\] we get
 \begin{eqnarray*} 
\Delta_{f-2a\ln R} (w) & \ge  &  \frac {|\na Rm|^2} {R^{2}}  -2C_1 \frac {|\na Rm|}{R }-C_2 \frac {|Rm|^2} {R^{2}} \\
& \ge & \left(\frac {|\na Rm|} {R^{}}+  \frac {|Rm|^2} {R^{2}}\right)^2 - C \frac {|Rm|^4} {R^{4}}
-C_1\frac {|\na Rm|}{R} \\
& & -C_2 \frac {|Rm|^2} {R^{2}}\\
& \ge & w^2 - C_1 w-C_2,
\end{eqnarray*} 
where in the last inequality we have used Theorem 5.1 and Theorem 5.2. 
Now,  it follows from a standard maximum principle argument that $w\le C$, hence
\[ |\na Rm|\le CR \quad \text{on}  \ M . \] 
\end{proof}

\section {Further Discussion} 

In this last section, we draw some conclusions and raise some open questions.

\medskip

{\bf Conclusion 1}. The scalar curvature lower bound $R\ge C/f$ of Chow-Lu-Yang \cite{CLY} played a crucial rule in the curvature estimates $|Rm|\le CR$ and $|\na Rm|\le CR$ for shrinkers by Munteanu and Wang \cite{MW}, especially in {\bf Step 5-Step 9} of the sketched proof.  While lacking such a scalar curvature lower bound in the expanding case, by picking the curvature quantities suitably, we still managed to prove the almost sharp curvature estimate for $Rm$ and $\na Rm$ for 4-dimensional gradient expanding solitons with nonnegative Ricci curvature $Rc\ge 0$. 
On the other hand, if there is any polynomial lower bound for $R$, then we would achieve the sharp curvature estimate $|Rm|\le CR$ in the expanding case. 

\medskip
{\bf Question 1}. Suppose $(M^n, g, f)$ is a complete noncompact non-flat gradient expanding soliton with  nonnegative Ricci curvature $Rc\ge 0$. Does its scalar curvature $R$ have at most polynomial decay $R\ge C/F^d$ for some $d\ge 2$ at infinity?

\begin{remark} By comparing the proofs of Theorem 3.1 and Theorem 4.3, one see that the scalar curvature lower bound $R\ge C/f$ is not really needed in {\bf Step 3} (for Theorem 3.1) if one makes use of the bound  $|Rm|\le C$ as in {\bf Step III} (for Theorem 4.3).  
\end{remark}

\begin{remark} 
Recently, P.-Y. Chan \cite{Chan2} has proved an exponential decay scalar curvature lower bound for expanding solitons with positive scalar curvature  $R> 0$ and proper $f$: Suppose $(M^n, g, f)$, $n\ge 2$,  is any $n$-dimensional complete noncompact gradient expanding Ricci soliton with positive scalar curvature  $R> 0$ and proper potential function $F$. Then
\[ R\ge CF^{1-\frac n2}e^{-F} \quad on \ M^n\]
for some constant $C>0$.  

Moreover, based on the constructions of Deruelle \cite{Der16, Der17}, very recently Chan and Zhu \cite{Chan3} have exhibited an example of $3$-dimensional complete noncompact asymptotically conical gradient expanding soliton with nonnegative curvature operator $Rm\ge 0$ such that 
\[ \liminf_{r\to \infty} \ F |Rm| =0 \quad  \mbox{and} \ \limsup_{r\to \infty} \ F |Rm| <\infty . \] 
\end{remark}

\medskip

{\bf Conclusion 2}. Gradient estimates for scalar curvature $R$ is quite essential whenever we involve the operator $\Delta_{f-2\ln R}$ in the maximum principle argument. 

In the shrinking case,  in Step 7 and Step 9, the gradient estimate $|\na \ln R|^2 \le C\ln (f+2)$ in {\bf Step 6} is used to derive the sharp $Rc$ estimate $|Rc| \le CR$ (hence the sharp $Rm$ estimate $|Rm|\le CR$) and the estimate $|\na Rm|\le CR$.  

Similarly, for expanders with $Rc\ge 0$, the gradient estimate 
$|\na \ln R| \le 2 |\na F|$ in (4.19) is responsible for obtaining the almost sharp $Rm$ estimate and the estimate on 
$|\na Rm|$. 

In the steady case, we have the sharp gradient estimate $|\na \ln R|\le C$, which made the sharp estimates $|Rm|\le CR$ and $|\na Rm|\le CR$ possible. 

\medskip
{\bf Question 2}. Suppose $(M^4, g, f)$ is a complete noncompact gradient expanding Ricci soliton with $0<R \le R_0$ and proper $f$. Does the gradient estimate $|\na \ln R|\le C |\na F|$, or more precisely, 
\begin{equation}
R^{-1} { Rc(\na F, \na F) }  \le C,
\end{equation}
hold for some constant $C>0$ ? 

\begin{remark} 
If (6.1) holds for $(M^4, g, f)$, then one would be able to improve the conclusions in Theorem 4.3 to  $|Rm|^2\le CR$ and $|\na Rm|^2\le CR$. 
\end{remark} 

Also, in the shrinking case, the estimate $|\na Rm | \le C R$ implies the sharp gradient estimate 
\begin{equation}
|\na \ln R| \le C \quad \text{on} \ M .
\end{equation}
In the expanding case, from $|\na Rm| \le C R^{a} $ for any $0<a<1$ we also have 
\begin{equation}
|\na \ln R| \le C_aR^{a-1} \quad \text{on} \ M .
\end{equation}

\medskip
 {\bf Question 3}. Does gradient estimate (6.2) for $R$ hold for 4-dimensional gradient expanding Ricci solitons with nonnegative Ricci curvature? Is it possible to prove (6.2) directly in the shrinking case? 

\begin{remark}  If the answer to Question 3 is affirmative, then one would be able to improve the conclusions in Theorem 4.1 to  $|Rm|\le CR$ and $|\na Rm|\le CR$ without any extra assumption.
\end{remark} 

\medskip

{\bf Conclusion 3}. Some key steps, e.g., {\bf Step 5} and {\bf Step 9}, in the proof of the shrinking case do not seem to work for expanders (even if one has the improved gradient estimate $|\na \ln R|^2\le C_1 (\ln F+C_2)$ for expanders). This seems to be mainly due to the fundamentally distinct nature between shrinkers and expanders which is partly reflected in the different curvature differential identities: 
\[\Delta_f Rm= Rm  + Rm\ast Rm  \quad \text{versus} \quad \Delta_f Rm= -Rm  + Rm\ast Rm .\]
 If one draws a comparison to the heat equation, in the shrinking case $\Delta_f:=\Delta -\na f\cdot \na $ behaves like a heat operator, where the $\na f$ term corresponds to the time derivative, whereas in the expanding case $\Delta_f$ behaves like a backward heat operator. To some  extent, this seems to explain why the same arguments do not work in the expanding case. In any case, 
it  remains a challenge to prove the sharp estimate $|Rm|\le CR$ for 4-dimensional expanders (with $Rc\ge 0$).

\medskip
Finally, we would like to point out that recently Chow et al \cite{Chow et al} have proved  the following curvature estimates for complete gradient shrinking and steady solitons that are singularity models.  Here, by gradient shrinking or steady {\it Ricci soliton singularity model}, it means that $(M^4, g, f)$ arises as a limit of parabolic dilations at a finite time singularity of the Ricci flow ${\bar g}(t)$ on a closed oriented $4$-manifold  $\bar{M}^4$.

\begin{theorem} {\bf (Chow-Friedman-Shin-Zhang \cite{Chow et al} )}. Any $4$-dimensional gradient shrinking Ricci soliton singularity model $(M^4, g, f)$ must have (at most) quadratic curvature growth, i.e., there exists some $C>0$  such that
\[ |Rm| \le C (1+ r(x))^2 \quad \text{on} \ M ,\]
where $r(x)$ is the distance function on $M$ from some fixed base point. 
\end{theorem}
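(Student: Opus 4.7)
The plan is to argue by contradiction via a parabolic blow-up analysis. The three ingredients I would exploit are: (i) Cao-Zhou's asymptotic $f(x) \sim \tfrac14 r(x)^2$ (Proposition 3.1); (ii) Perelman's $\kappa$-noncollapsing, which $(M^4,g)$ inherits from the parent Ricci flow $\bar g(t)$ on the closed 4-manifold $\bar M^4$; and (iii) an integral curvature bound of the form
\[ \int_{B_g(y,r)} |Rm|^2\, dV_g \le K \quad \text{for all } y \in M^4,\ 0 < r \le r_0, \]
with $K, r_0$ depending only on the parent flow, inherited from Bamler's $\mathbb{F}$-compactness and partial regularity theory for four-dimensional Ricci flows. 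It is not a coincidence that $\int |Rm|^2\, dV$ is scale-invariant in dimension four: that is exactly what makes the target growth exponent equal to $2$.

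Suppose, for contradiction, that there exist $x_k \in M^4$ with $r_k := r(x_k) \to \infty$ and $|Rm|(x_k)/r_k^2 \to \infty$. By a Hamilton-type point-picking refinement I may assume $x_k$ nearly maximizes $|Rm|$ on $B_g(x_k, r_k/4)$. Set $\lambda_k := |Rm|(x_k)^{1/2}$, so $\lambda_k/r_k \to \infty$, and rescale to $\tilde g_k := \lambda_k^2\, g$. The rescaled soliton equation becomes
\[ \widetilde{Rc} + \widetilde{\nabla}^2 f = (2\lambda_k^2)^{-1}\,\tilde g_k, \]
and Cao-Zhou yields $|\widetilde{\nabla} f|_{\tilde g_k}^2 = \lambda_k^{-2}\,|\nabla f|_g^2 \le \lambda_k^{-2}\, f \le C\, r_k^2/\lambda_k^2 \to 0$ uniformly on $\tilde g_k$-bounded neighborhoods of $x_k$. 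Shi's derivative estimates plus the point-picking bound give uniform $C^\infty$ control, and $\kappa$-noncollapsing passes to $(M^4, \tilde g_k, x_k)$. Hamilton's Cheeger-Gromov compactness then produces a pointed smooth subsequential limit $(M_\infty, g_\infty, x_\infty)$ which is complete, $\kappa$-noncollapsed, Ricci-flat (both sides of the rescaled soliton equation vanish in the limit), and nonflat since $|Rm_{g_\infty}|(x_\infty) = 1$.

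The contradiction comes from comparing (iii) with a lower bound on the $L^2$ curvature of the limit. By an Anderson/Cheeger-Tian $\varepsilon$-regularity theorem for nonflat $\kappa$-noncollapsed Ricci-flat 4-manifolds,
\[ \int_{B_{g_\infty}(x_\infty,1)} |Rm_{g_\infty}|^2\, dV_{g_\infty} \ge c_0 > 0 \]
(otherwise the energy would fall below the $\varepsilon$-regularity threshold and force $|Rm_{g_\infty}|(x_\infty) \le 1/4$, contradicting the normalization). By scale-invariance of $\int |Rm|^2\, dV$ in dimension four and smooth convergence, this translates to $\int_{B_g(x_k,\lambda_k^{-1})} |Rm|^2\, dV_g \ge c_0$ for all large $k$. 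Passing to a further subsequence with $r_{k+1} \ge 2r_k$ makes the balls $B_g(x_k, \lambda_k^{-1})$ pairwise disjoint (since $\lambda_k^{-1} \ll r_k$), so the sequence constitutes infinitely many distinct $\varepsilon$-regularity-violating events on $(M^4, g)$, contradicting the finite-energy budget inherent in (iii) that is ultimately bounded in terms of $\chi(\bar M^4)$ and $|\sigma(\bar M^4)|$.

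The main obstacle I anticipate is justifying (iii) and closing the accounting in the last step cleanly. The scale-invariant small-scale $L^2$-curvature bound on the singularity model is not elementary: it is expected to follow from Bamler's $\mathbb{F}$-compactness and partial regularity theory for four-dimensional Ricci flows, which controls how $L^2$ curvature can concentrate near a singular time of the compact parent flow through the Chern-Gauss-Bonnet and Hirzebruch signature integrands. Making this transfer between the parent flow and the singularity model precise, and ensuring that each $\varepsilon$-regularity-violating event in $(M^4, g)$ really uses up a definite fraction of the parent's finite topological budget, is the principal technical step. The rest of the argument (blow-up, extraction of a Ricci-flat limit, $\varepsilon$-regularity) is by now standard technology in the analysis of Ricci flow singularity models.
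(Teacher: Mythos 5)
This theorem is only quoted in the survey (Theorem 6.1) from Chow--Freedman--Shin--Zhang \cite{Chow et al}; the paper itself contains no proof, so I am judging your argument on its own terms. The blow-up skeleton is sound and standard: if $|Rm|(x_k)/r_k^2\to\infty$, then after point selection and rescaling by $\lambda_k^2=|Rm|(x_k)$ the soliton equation does degenerate (since $|\nabla f|^2\le f\le Cr^2$ forces $|\widetilde{\nabla} f|_{\tilde g_k}\to 0$ on $\tilde g_k$-bounded sets), the limit is complete, non-flat, Ricci-flat and $\kappa$-noncollapsed, and an Anderson/Cheeger--Tian type $\varepsilon$-regularity theorem does force $\int_{B_{g_\infty}(x_\infty,1)}|Rm_{g_\infty}|^2\,dV\ge c_0>0$. (Two details need care: Shi-type derivative bounds require passing to the associated self-similar flow or using elliptic regularity for the soliton system, and the point selection must be set up so that the selected points still violate the quadratic bound.) Note, however, that a non-flat noncollapsed Ricci-flat limit is not by itself absurd --- Ricci-flat ALE spaces such as Eguchi--Hanson are exactly such objects and genuinely occur as blow-up limits of $4$-dimensional Ricci flows --- so the entire burden of your proof falls on the energy accounting.

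That accounting has a genuine gap, in two respects. First, even granting your hypothesis (iii), a \emph{local} uniform bound $\int_{B(y,r)}|Rm|^2\,dV\le K$ for $r\le r_0$ does not contradict the existence of infinitely many pairwise disjoint balls each carrying energy $c_0\le K$: a manifold with $|Rm|\equiv 1$ satisfies such a local bound and has infinite total energy. Your final step needs a \emph{global} (or at least summable) unweighted $L^2$ bound, and none is available; the known global statement for $4$-dimensional shrinkers is the weighted bound $\int_M|Rm|^2e^{-f}\,dV<\infty$ (Haslhofer--M\"uller, Munteanu--Wang), and since $f$ is essentially constant, equal to $f(x_k)\approx r_k^2/4\to\infty$, on the tiny balls $B_g(x_k,\lambda_k^{-1})$, the weight $e^{-f(x_k)}\to 0$ absorbs the $c_0$'s and yields no contradiction. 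Second, (iii) itself is unjustified and almost certainly too strong: Bamler's partial regularity theory gives local $L^p$ curvature bounds only for $p<2$ (the scale-invariant $p=2$ case in dimension four is precisely the critical borderline it does not reach), while the Chern--Gauss--Bonnet and signature integrands control $\int(|Rm|^2-4|Rc|^2+R^2)$, not $\int|Rm|^2$, with the Ricci term uncontrolled near the singular time. Worse, if (iii) did hold at a fixed scale $r_0$ for the parent flow, then, by scale invariance of the energy and the fact that the singularity model is a limit of rescalings by factors tending to infinity, it would hold on $(M^4,g)$ at \emph{every} scale, hence $\int_M|Rm|^2\le K$ globally, and your own argument would then prove $|Rm|\le C$ --- bounded curvature --- which is strictly stronger than the stated theorem and beyond what is currently known. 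This is a strong signal that the mechanism behind the quadratic exponent is not the scale invariance of $\int|Rm|^2$ but the growth $f\sim r^2/4$ entering through the weight $e^{-f}$ (or through the entropy at scales tied to $f$); the proof needs an input calibrated to that weight rather than a uniform unweighted energy budget.
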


\begin{remark} Since $R+|\na f|^2=f$ and $R>0$, it follows from Proposition 3.1 that 
\begin{equation}
R\le \frac 1 4 (r(x)+c_2)^2
\end{equation}
for any complete noncompact gradient shrinking Ricci soliton.  
\end{remark}

\begin{theorem} {\bf (Chow-Friedman-Shin-Zhang \cite{Chow et al} )}. Any $4$-dimensional gradient steady Ricci soliton singularity model $(M^4, g, f)$ must have bounded curvature, i.e., there exists some $C>0$  such that
\[ |Rm| \le C \quad \mbox{on} \ M .\]
\end{theorem}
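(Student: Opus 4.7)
The plan combines two ingredients supplied by the singularity-model hypothesis with a blow-up compactness argument. First, since $(M^4, g, f)$ arises as a pointed Cheeger-Gromov limit of parabolic rescalings at a finite-time singularity of a Ricci flow on a closed $4$-manifold, Perelman's scalar curvature estimate $R(\cdot, t)(T-t)\le C$ descends to the limit and gives a uniform upper bound $R\le R_0$ on $(M^4, g)$, while Perelman's no-local-collapsing theorem yields $\kappa$-noncollapsedness at all scales. The steady identity $R+|\nabla f|^2=C_0$ then forces $|\nabla f|$ to be uniformly bounded as well, providing the basic scale on which the blow-up analysis is normalized.

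Second, I would argue by contradiction: assume $\sup_M|Rm|=\infty$, pick $x_k\in M$ with $\lambda_k:=|Rm|(x_k)\to\infty$, and rescale by $g_k:=\lambda_k g$. The steady equation $Rc+\nabla^2 f=0$ is invariant under $g\mapsto cg$ as a statement about $(0,2)$-tensors, and after rescaling one has $|Rm|_{g_k}(x_k)=1$ while $R_{g_k}\to 0$, $|Rc|_{g_k}\to 0$, and $|\nabla f|_{g_k}\to 0$ uniformly on bounded sets. Noncollapsedness is scale-invariant, and Shi's local derivative estimates applied to the Ricci flow generated by $-\nabla f$ give uniform $C^\infty$-bounds at unit scale around $x_k$. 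Hamilton's compactness theorem then produces a smooth pointed subsequential limit $(M_\infty^4, g_\infty, x_\infty)$ that is complete, $\kappa$-noncollapsed, Ricci-flat, and non-flat (since $|Rm_\infty|(x_\infty)=1$).

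The main obstacle is then ruling out such a limit, and this is where I expect the delicate work to lie. A natural approach is to invoke rigidity for $\kappa$-noncollapsed Ricci-flat $4$-manifolds in the Anderson / Bando-Kasue-Nakajima / Cheeger-Naber tradition: codimension-$4$ $\varepsilon$-regularity forces curvature in such a limit to concentrate in sets of codimension $\ge 4$, and in a smooth pointed limit this produces a quantitative lower bound on $\int_{B_1(x_\infty)}|Rm|^2\,dV$. One then contradicts this by pulling the lower bound back to the original singularity model and bounding the total $L^2$-curvature on large balls in $(M^4, g)$ via the Gauss-Bonnet-Chern formula on the underlying closed manifold $\bar M^4$: the total $L^2$-curvature is topologically controlled, and the $\kappa$-noncollapsed volume lower bound prevents too many disjoint blow-up regions from coexisting. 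An alternative route, if the first approach proves too coarse, is to track the rescaled potentials $f_k:=f-f(x_k)$ and show that their limiting gradient direction produces either an isometric splitting $M_\infty=\mathbb{R}\times N^3$---reducing matters to a three-dimensional ancient-solution problem where B.-L.\ Chen's $Rm\ge 0$ applies---or a tangent cone at infinity incompatible with the noncollapsed volume lower bound.
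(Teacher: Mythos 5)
A preliminary remark: the survey only \emph{quotes} this theorem from \cite{Chow et al} and does not reproduce any proof of it, so there is no in-paper argument to compare yours against; what follows assesses your proposal on its own terms.

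Your blow-up framework is a reasonable starting point, but the proof has a genuine gap at its decisive step, plus several smaller unjustified claims. The smaller ones first: (i) the bound $R\le R_0$ does not come from a Perelman-type estimate $R(\cdot,t)(T-t)\le C$ --- no such upper bound is known for general finite-time singularities, and steady models arise from Type II behavior; it comes instead from the steady identity $R+|\nabla f|^2=C_0$ combined with $R\ge 0$ for complete ancient solutions/steady solitons (Chen, Pigola--Rimoldi--Setti), so your conclusion is right but the cited mechanism is wrong. (ii) Choosing $x_k$ merely with $|Rm|(x_k)\to\infty$ does not give the uniform curvature bounds on $g_k$-balls of definite radius that Shi's estimates and Hamilton's compactness theorem require; you need a point-selection lemma before rescaling. (iii) The assertion that $|Rc|_{g_k}\to 0$ uniformly is not justified by anything you wrote: rescaling only gives $R_{g_k}\to 0$, and Ricci-flatness of the limit must be extracted afterwards, e.g.\ from $\partial_t R=\Delta R+2|Rc|^2$ applied to the limiting eternal flow with $R\equiv 0$, or from the strong maximum principle.

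The serious gap is your final paragraph. The entire content of the theorem is the exclusion of a complete, non-flat, $\kappa$-noncollapsed (at all scales), Ricci-flat blow-up limit, and you do not carry this out: you list two candidate strategies without completing either, and both have real obstructions. Such limits cannot be excluded by any general rigidity statement --- the Eguchi--Hanson metric is Ricci-flat, non-flat, ALE and hence noncollapsed at all scales --- so one must exploit the specific way the limit arises from the closed flow $\bar g(t)$. Your Gauss--Bonnet/$L^2$-energy route founders because $\int_{\bar M}|Rm|^2$ is not controlled by $\chi(\bar M)$ alone (the Gauss--Bonnet--Chern integrand involves $|Rc|^2$ and $R^2$ with signs, and there is no a priori $L^2$ Ricci bound for $\bar g(t)$ near the singular time), so the $\varepsilon_0$ of local energy carried by each blow-up region has nothing finite to be compared against. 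Your splitting route, which is the natural mechanism in the shrinking case (there $|\nabla f|\sim r/2\to\infty$ supplies the splitting direction, after which B.-L.\ Chen's three-dimensional estimate $|Rm|\le cR$ contradicts $R\equiv 0$ against $|Rm_\infty|(x_\infty)=1$), does not transfer to the steady case as stated: for a steady soliton $|\nabla f|\le\sqrt{C_0}$ is bounded, hence $|\nabla f|_{g_k}=|\nabla f|_g/\sqrt{\lambda_k}\to 0$, so the rescaled potentials become asymptotically constant and single out no splitting direction. Until one of these routes is actually closed, the argument is incomplete precisely where the theorem's difficulty lives.
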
 

Naturally, one can ask 

\medskip
 {\bf Question 4\footnote{\ \!Note Added in Proof: In the steady soliton case, Question 4 has been answered affirmatively by Chan-Ma-Zhang \cite{Chan4} very recently. Namely, $|Rm|\le CR$ holds in Theorem 6.2.}}. Does $|Rm|\le CR$ hold in Theorem 6.1 and Theorem 6.2?

\bigskip

\end{document}